\newtheorem{prop}{Proposition}
\newtheorem{theorem}{Theorem}
\newtheorem{lemma}{Lemma}
\newtheorem{cor}{Corollary}
\newtheorem{defi}{Definition}
\newcommand{\R}{\mathbb{R}}
\title{{Integrable Mechanical Billiards in Higher Dimensional Space Forms}}
\author{Airi Takeuchi, Lei Zhao}
\begin{document}

\maketitle
\begin{center}
	\it{Dedicated to Alain Chenciner on his 80th birthday.}
\end{center}

\begin{abstract}
{In this article, we consider mechanical billiard systems defined with Lagrange's integrable {extension} of Euler's two-center problems in the Euclidean space, on the sphere, and in the hyperbolic space of arbitrary dimension $n \ge 3$. In the three-dimensional Euclidean space, we show that the billiard systems with any finite combination of quadrics having two foci at the Kepler centers are integrable. The same holds for the projections of these systems on the three-dimensional sphere and the hyperbolic space by means of central projection. With the same approach, we also extend these results to the $n$-dimensional cases. } 

\end{abstract}

\section{Introduction}
{A natural mechanical system $(M,g,U)$ is defined on an $N$-dimensional Riemannian manifold $(M,g)$ {with} a smooth force function $U: M \to \R$. {{This} system determines the motion of a particle on a manifold, governed by the second-order \emph{Newton's equations}}
\begin{equation*}
\nabla_{\dot{q}}\dot{q}=  {\hbox{grad}_{g} U(q)}, \, q \in M,
\end{equation*}
in which {${\nabla}$} is the Levi-Civita connection associated to the Riemannian metric $g$ {and {$\hbox{grad}_{g}$} is the gradient with respect to $g$.}
}
{The kinetic energy of the system{, seen as a function defined on $TM$, is} given by 
\[
K(q,\dot{q}) = \frac{1}{2} g_q(\dot{q}, \dot{q}).
\]
The total energy {$E=K+V$} of the {system} is the combination of the kinetic energy $K$ and the potential $V:= -U$.  {{The total energy $E$} is always a first integral of the system, \emph{i.e.}, it is }preserved along the motion. 
}



{{We consider, {in addition}, the presence of a {(piecewise)} smooth{, codimension-1} reflection wall} $\mathcal{B}$ in a natural mechanical system $(M,g,U)$. {A particle moves in the system until it reflects elastically at $\mathcal{B}$, \emph{i.e.}, {the velocity does not change its norm, and its direction at the reflection point} is changed 
so that the angle of incidence {equals} the angle of reflection.} {As such, we obtain a billiard system defined with an underlying natural mechanical system. {We denote this system}  by $(M,g, U, \mathcal{B})$. Note that in the case {$\hbox{grad}_{g} U=0$}, we obtain the billiard system in the usual sense defined on $(M, g)$. When in addition, $M$ is $\R^{2}$ equipped with the standard Euclidean metric, then we obtain the usual Birkhoff billiards in the plane. Their dynamics depends only on the shape of the reflection walls, and have been extensively studied. The dynamics of a general mechanical billiard system, on the other hand, also depends crucially on the underlying mechanical system. 
}

{{For} a mechanical billiard system $(M,g,U, \mathcal{B})$, a quantity} is called {a} first integral {if} it is a first integral of the underlying system $(M,g,U)$
and, in addition, does not change its value at the reflections at $\mathcal{B}$. 

{It is clear from the construction}  that the total energy {$E$} is preserved under the reflections and {is always} a first integral of the system. {The dynamics of the system depends in a crucial way on whether other first integrals exist. Recent progresses on the Birkhoff-Poritsky Conjecture \cite{Kaloshin-Sorrentino}, \cite{Bialy-Mironov} show that for Birkhoff billiards in the plane, only highly special billiard domains could admit additional first integrals. These results somehow hint that mechanical billiards admitting additional first integrals should be rare and highly special as well.}

{The billiard system $(M,g,U, \mathcal{B})$ is called integrable if {there exist $n=\dim M$ independent first integrals which are mutually in involution}. Integrable mechanical billiard systems should be rare when $\dim M =2$ since the existence of a first integral in addition to the energy is in general not expected. When $\dim M \ge 3$, integrable mechanical billiards should have a higher rarity since we need more independent first integrals in addition to the energy to exist. }

{
The trajectories of the classical free (Birkhoff) billiards are completely determined by the trajectories of billiard mapping which sends the pair of a point of reflection and the direction of a velocity to the pair of consecutive ones. {The situation is completely analogous for  mechanical billiard systems, and the billiard mapping thus obtained is symplectic with respect to a proper symplectic form.} 
The discrete version of the Arnold-Liouville theorem \cite{Veselov} tells that for {such} integrable symplectic maps,
the phase space of the mapping is foliated by invariant tori, and {the} system evolutes linearly on each torus in appropriate coordinates.} {So the dynamics of an integrable mechanical billiard system is mostly ordered, in contrast to the dynamics of non-integrable (mechanical) billiard systems, which can be much more chaotic.}

{{It was probably L. Boltzmann who was the first to consider billiard systems in a potential field, or what we call mechanical billiards.} In \cite{Boltzmann}, Boltzmann considered the mechanical billiard system with the underlying central force problem in $\R^2$ defined with {the} force function 
\[
V_{\alpha,\beta } = \frac{\alpha}{2r} - \frac{\beta}{2r^2}, \quad \alpha, \beta \in \R
\]
in which $r$ is the distance of the particle from the origin $O$, and with a line not passing the origin as {the} reflection wall. {In this paper, he thought {he had} shown that such a billiard system is ergodic.} {G. Gallavotti \cite{Gallavotti} realized that there was a gap in the argument and conjectured, based on the numerics of I. Jauslin, that the case $\beta=0$ should be integrable, which he subsequently proved with I. Jauslin \cite{Gallavotti-Jauslin}, by constructing an additional first integral in addition to the energy of the system. The integrable dynamics of this {mechanical billiard system} was analyzed by Felder \cite{Felder}, in which a more direct proof was presented. In \cite{zhao2021}, a conceptual interpretation of the additional first integral of Gallavotti-Jauslin, and thus yet a different proof of this integrability, was given with the theory of projective dynamics. Moreover, the analysis in \cite{Felder} shows that for sufficiently small $|\beta|$, the system remains non-ergodic by application of KAM theory. Thus, in order for Boltzmann's ergodic assertion to be true, $|\beta|$ has to be sufficiently large with respect to $|\alpha|$. Some numerical investigations can be found in {\cite{ATThesis}}. 
}

{Euler's two-center problem is a classical integrable system \cite{Euler}. Lagrange modified the system by adding a Hooke center at the middle point of the two Kepler centers, and the resulting system remains integrable \cite{Lagrange}. 
These systems can be defined analogously on the sphere and in the hyperbolic plane, by taking the proper form of potentials generalizing the Kepler and Hooke potentials, based on the works of Serret  \cite{Serret} for the sphere and Killing \cite{Killing} for {general n-dimensional space forms}. As a further generalization of the line of the investigation initiated by Gallavotti, Jauslin, and Felder, in \cite{Takeuchi-Zhao1} , \cite{Takeuchi-Zhao2}, we established the integrability of mechanical billiard systems within the two-center problem/Lagrange problem and with any finite combination of confocal conic sections focused at the Kepler centers as walls of reflection in the plane, on the sphere, and in the hyperbolic plane. }

{The method we used in \cite{Takeuchi-Zhao2} is based on projective billiards and projective dynamics. The projective {viewpoint} has been used for the study of integrability of geodesics \cite{Tabachnikov2}, \cite{Tabachnikov 1999}, \cite{Topalov-Matveev}. In \cite{Tabachnikov 1999}, \cite{Topalov-Matveev}, based on the notion of projective equivalence, the geodesic flow on the ellipsoid is shown to be integrable. The application of the projective {viewpoint} to free billiard systems gives rise to the study of projective billiards \cite{Tabachnikov3}, \cite{Tabachnikov 2002}.
The projective viewpoint for mechanical systems is developed by A. Albouy to what is now called projective dynamics \cite{Albouy1}. An elegant application of this theory obtained by him is the conceptual proof of the integrability of the Lagrange problem \cite{Albouy2}. The method we {used} in \cite{Takeuchi-Zhao2} {combined} these two aspects.
}

}

 {The purpose of this article is to discuss further generalizations of these integrable mechanical billiards in a space of constant {sectional curvature} of dimension at least 3}. {The discussion will be made on the simply connected ones, \emph{i.e.,} the Euclidean space, the $n$-dimensional sphere, and the $n$-dimensional hyperbolic space. Examples of integrable mechanical billiards on non-simply connected space forms are obtained from integrable mechanical billiards on their universal covers by restrictions. Since the systems we are dealing with are typically not invariant under the automorphism group, such a restriction procedure typically leads to {incomplete,} singular systems.}

{As walls of reflection we consider {what we call \emph{confocal  quadrics}}. In the Euclidean space, these are the non-degenerate quadric hypersurfaces having two foci in the sense of optics, namely, each ray emanating from one of them passes through the other after reflection at the quadric. Moreover, we assume the two Kepler centers are the foci, with the axis through the centers as a symmetric axis\footnote{For these quadric hypersurfaces, this actually follows from the requirement of having two foci.}. In $\R^{3}$, these are {spheroids (ellipsoids of revolution) or circular hyperboloids (hyperboloids of revolution) of two sheets}  focused at the two Kepler centers.  
On the sphere and in the hyperbolic space, these are defined via the central projections. {See Definition \ref{def: symmetric_quadrics_S3} and Definition \ref{def: symmetric_quadrics_H3}.} 
Notice that our definition of confocal quadrics is more restrictive than the terminology which has been used in a more common way, for example in \cite{Odehnal}.

{Our result is summarized in the following theorem: }

\begin{theorem} \label{thm: int_main}
{The Lagrange billiard systems in the Euclidean space, on the sphere, and in the hyperbolic space of any dimension $n \ge 3$, with any finite combination of confocal quadrics as walls of reflection, are integrable.}
\end{theorem}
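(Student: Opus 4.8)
The plan is to reduce the assertion to two separate facts: that the underlying $n$-dimensional Lagrange problem in each of the three space forms is Liouville integrable, and that the full set of first integrals witnessing this is preserved under elastic reflection at every confocal quadric. The natural framework for both is a system of confocal (spheroidal-type) coordinates $(\mu,\nu,\theta_1,\dots,\theta_{n-2})$ adapted to the focal axis through the two Kepler centers, with $\mu,\nu$ the two confocal coordinates in a meridian half-plane and $\theta_1,\dots,\theta_{n-2}$ angular coordinates on the $(n-2)$-sphere transverse to that axis. Since the Hooke center sits at the common center of the confocal family, the whole Lagrange potential (two Kepler terms plus the Hooke term, in their Euclidean, Serret, and Killing forms) respects this coordinate system. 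First I would record that the Hamilton--Jacobi equation separates in these coordinates --- this is Lagrange's theorem in the Euclidean case and, conceptually, Albouy's projective-dynamics proof --- producing $n$ independent, pairwise commuting integrals: the energy $E$, a spheroidal separation constant $G$ quadratic in the momenta, and a chain $J_1,\dots,J_{n-2}$ of rotational integrals generated by the $SO(n-1)$ symmetry about the focal axis.

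The heart of the argument is the preservation of these integrals at the walls, and here the confocal hypothesis on the walls is used decisively. The confocal quadrics are exactly the coordinate hypersurfaces of the confocal system ($\mu=\text{const}$ spheroids and $\nu=\text{const}$ hyperboloids), and this system is orthogonal. An elastic reflection, being orthogonal with respect to the ambient metric, therefore reverses precisely the momentum conjugate to the coordinate defining the wall and leaves the remaining coordinates and momenta unchanged. Because the coordinates are orthogonal, each of the integrals $E,G,J_1,\dots,J_{n-2}$ is the sum of a diagonal quadratic form in the momenta (no cross terms $p_{q_i}p_{q_j}$) and a function of the coordinates, hence even in each momentum separately; so reversing a single momentum leaves every integral invariant. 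Complementarily, since every confocal quadric is a surface of revolution about the focal axis, the reflection normal always lies in the meridian plane, which gives directly that the angular momenta about the axis, and thus all the $J_k$, are unchanged; $E$ is preserved by elasticity; and $G$ is preserved by the parity argument. As this preservation is local at each reflection and independent of which confocal quadric is encountered, it extends verbatim to any finite combination of confocal quadrics as walls.

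To pass from the Euclidean space to the sphere and the hyperbolic space I would invoke the central-projection correspondence underlying projective dynamics. Central projection from the model hyperquadric in $\R^{n+1}$ onto a tangent affine chart sends geodesics to straight lines, conjugates the spherical and hyperbolic Lagrange problems to the Euclidean one up to a reparametrization of time, and --- this is the point that must be checked carefully --- maps confocal quadrics to confocal quadrics. Consequently the separating coordinate system, the integrals, and their momentum-parity behaviour at the walls all transport across the three geometries, so integrability in the curved cases follows from the Euclidean case.

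The step I expect to be the main obstacle is the simultaneous control, in arbitrary dimension and on the curved space forms, of (i) the confocal coordinate system and the separability of the full Lagrange problem in it, and (ii) the precise image of the confocal quadrics under central projection. One must check that the $(n-2)$-dimensional rotational chain and the Serret/Killing potentials do not spoil the diagonal (St\"ackel) structure on which the parity argument rests, and that the projection genuinely carries the curved confocal quadrics onto Euclidean ones with the normals at the walls matching, so that the reflection law translates correctly. Once these two points are secured, the remainder of the argument is formal.
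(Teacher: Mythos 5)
Your proposal is correct in outline, but it proves the theorem by a genuinely different mechanism than the paper. The paper never separates variables: its second integral is the energy of the \emph{projected} system (the spherical/hyperbolic Lagrange energy pulled back to the affine chart, or the Euclidean energy pushed to the curved model), its conservation at the walls follows from the fact that confocal quadrics in the chart and on the model hyperquadric are in projective correspondence (central projection commutes with elastic reflection, Lemma~\ref{lem: reflection_law}), and the remaining $n-2$ integrals are the Casimir chain $C_k=\sum_{2\le i<j\le k}L_{ij}^2$ built from the angular momenta about the focal axis, preserved by a direct computation using the rotational symmetry of the walls and shown to be in involution via the $\mathfrak{so}(n)$ bracket relations. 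You instead take the classical Birkhoff-style route: Hamilton--Jacobi separation of the Lagrange problem in prolate-spheroidal-type coordinates, identification of the confocal walls as coordinate hypersurfaces of this orthogonal net, and the parity observation that reflection at a coordinate hypersurface flips exactly one conjugate momentum while every St\"ackel integral is even in each momentum separately. Both mechanisms are sound; your rotational chain $J_1,\dots,J_{n-2}$ coincides in substance with the paper's $C_k$, and your spheroidal separation constant is a functional combination of the two energies and the total angular Casimir, so the integrals agree up to functional equivalence even though the conservation proofs differ. What your approach buys is a uniform, wall-local argument that makes the role of confocality transparent and that could bypass central projection entirely by separating directly in the Serret/Killing potentials on the curved space forms; what the paper's approach buys is explicit closed-form integrals in ambient coordinates and a conceptual explanation (projective dynamics) of why the extra integral exists at all. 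The two points you flag as delicate --- that the Hooke term and the $(n-2)$-dimensional angular block do not spoil the St\"ackel structure, and that central projection carries confocal quadrics to confocal quadrics with matching normals --- are exactly the right ones; the second is the content of the paper's Lemmas~\ref{lem: reflection_law}, \ref{lem: hyp_reflection_law}, \ref{lem: reflection_ndim}, and \ref{lem: reflection_ndim_hyp}, and the first is a routine but necessary verification that your write-up would have to include.
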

{A simple observation is that the Lagrange billiard systems with any open subsets of such finite combination of confocal quadrics as reflection walls are again integrable. For example, we can consider a sheet of the circular hyperboloid of two sheets, or even any open subset of it. }

{{ In the sequel, we shall present proofs as well as various subcases of this theorem.} Note that for the Kepler problem, some limiting integrable cases can be obtained: For example, in $\R^{3}$, a circular paraboloid focused at the Kepler center also defines an integrable Kepler billard system. Another limiting case, namely the mechanical billiard with a circular paraboloid reflection wall in a uniform gravitational force field, has been analyzed in \cite{Jaud}.} Besides, it is classically known that the billiard system with any ellipsoid reflection wall centered at a Hooke center, not necessarily possesses an axis of symmetry, defines an integrable Hooke billard system \cite{Kozlov-Treshchev}, which is not a special case of our theorem. We shall not discuss these limiting or special cases further in detail. }



{The approach in this paper is to use projective dynamics to treat the higher-dimensional case. Indeed with this method, we always obtain a first integral in addition to the energy. The other first integrals are obtained from the symmetric condition in Theorem \ref{thm: int_main}. }

 {{We organize this article as follows:}
 In Section \ref{sec: central_projection}, we recall the theory of projective dynamics and the projective correspondence between force fields defined on the three-dimensional hemisphere and the three-dimensional Euclidean space. In Section \ref{sec: main_resutls}, we {establish the} integrability of the Lagrange billiard defined in the three-dimensional space and on the three-dimensional {sphere}. 
In Section \ref{sec: hyperboloid}, we prove the {integrability} for the hyperbolic case.  {Finally,} in Section \ref{sec: higher_dimensional}, we generalize our results to the higher dimensional case. }

{





\section{Central Projection and Projective Systems}
\label{sec: central_projection}

In the four-dimensional Euclidean space $\R^4$, we consider the three-dimensional subspace $W=\{ (x,y,z, -1) \} \subset \R^4$ and the three-dimensional unit sphere 
$$\mathcal{S}^3:=\{ (q_1,q_2,q_3,q_4)\in \R^4 \mid q_1^2 +q_2^2+q_3^2+q_4^2=1   \} .$$
The central projection from the origin $O=(0,0,0,0)$ of $\R^{4}$ projects the points of the south-hemisphere $\mathcal{S}_{SH}:=\{(q_1,q_2,q_3,q_4) \in \mathcal{S}^4 \mid q_4 <0 \}$ to the points in the three-dimensional space  $W$. We equip $\mathcal{S}^3$ and $\mathcal{S}_{SH}$ with their induced, round metrics from $\R^{4}$, {while for $W$ we allow a further affine change {from its induced metric}.} We denote the Euclidean inner product and the Euclidean norm of $\R^{4}$ {respectively} by $\langle \cdot , \cdot \rangle$ and $\|\cdot\|$. {In the theory of projective dynamics \cite{Albouy2}, \cite{Albouy1},} a force field $F_{W}$ on $W$ is projected to a force field $F_{S}$ on $\mathcal{S}_{SH}$ by the push-forward of the central projection with a factor of time change uniquely determined by the projection. Since the direction of the projection is orthogonal to $\mathcal{S}_{SH}$, by the computation which follows, {the trajectories of these systems are therefore also related by the central projection up to a time parametrization.}

We start {with} the force field $F_{W}$ in $W$ and deduce the corresponding force field $F_{S}$ on $\mathcal{S}_{SH}$. The equations of motion of the system in $W$ is
\begin{equation}
\label{eq: newton_eq_W}
\ddot{\tilde{q}}=F_{W}(\tilde{q}),
\end{equation}
where $\tilde{q}\in W$.
Let $q \in \mathcal{S}_{SH}$ be projected to $\tilde{q} \in W$ by the central projection:
$$
q = \| \tilde{q} \|^{-1} \tilde{q}.
$$
From this, we compute
$$
\dot{q}  = \| \tilde{q} \|^{-2} (\dot{\tilde{q}}  \| \tilde{q} \| - \langle \nabla \| \tilde{q}\| , \dot{\tilde{q}} \rangle \tilde{q}),
$$
where we write $\dot \,:=\dfrac{d}{dt}$ the time derivative. 
We now take a new time variable $\tau$ for the system on $\mathcal{S}_{SH}$ such that 
\begin{equation}
	\label{eq: time_parameter_change}
	{\dfrac{d}{d \tau}= \| \tilde{q} \|^{2}\dfrac{d}{d t}}
\end{equation}
 and write as $' :=\dfrac{d}{d\tau}$.
We thus have
$$
q'  =  (\dot{\tilde{q}}  \| \tilde{q} \| - \langle \nabla \| \tilde{q}\| , \dot{\tilde{q}} \rangle \tilde{q}).
$$
and
\begin{equation}
	\label{eq: second_derivative}
	q''= \| \tilde{q} \|^{2} (\ddot{\tilde{q}}  \| \tilde{q} \| - (\langle \nabla \| \tilde{q}\| , \ddot{\tilde{q}} \rangle+\langle d(\nabla \| \tilde{q}\|)/dt , \dot{\tilde{q}} \rangle) \tilde{q}).
\end{equation}

After pluging \eqref{eq: newton_eq_W} into above, we have
\begin{equation}\label{eq: second derivative correspondence}
	q''=\| \tilde{q} \|^{2} (F_{W}(\tilde{q})  \| \tilde{q} \|- \lambda(\tilde{q}, \dot{\tilde{q}}, \ddot{\tilde{q}}) \tilde{q}),
\end{equation}
where $\lambda(\tilde{q}, \dot{\tilde{q}}, \ddot{\tilde{q}})=\langle \nabla \| \tilde{q}\| , \ddot{\tilde{q}} \rangle+\langle {d(\nabla \| \tilde{q}\|)/dt} , \dot{\tilde{q}} \rangle$.

We observe that the first term of the right-hand side of this equation depends only on $\tilde{q}\in W$ and therefore, depends only on $q \in \mathcal{S}_{SH}$ by the central projection, {while the second term is radial: It is normal to $\mathcal{S}_{SH}$ and vanishes after being projected to the tangent space}. Projecting both sides of this equation to the tangent space $T_{q}  \mathcal{S}_{SH}$ we get the equations of motion on $S_{SH}$ given in the form
$$\nabla_{q'} q'=F_{S}(q).$$

In general, if we start with a natural mechanical system in $W$, the corresponding projected system defined on $\mathcal{S}_{SH}$ is not necessarily derived from a potential. {When this indeed holds true, then we call the initial natural mechanical system \emph{projective}:}

{
\begin{defi}
	\label{def: projective}
	A natural mechanical system in $W$ is called \emph{projective} if the projected system on $\mathcal{S}_{SH}$ is also a natural mechanical system, \emph{i.e.,} the projected force field $F_S$ on $\mathcal{S}_{SH}$ is derived from some force function. 
 \end{defi}
}

{Note that while it is certainly possible to extend this definition to allow more general situations, this definition is sufficient for our purpose.}

We now consider the following three-dimensional natural mechanical systems and {explore} their projective properties.



\begin{defi}
	\label{def: Lagrange_R3}
	The Lagrange problem in the three-dimensional Euclidean space $(\R^{3}, \| \cdot \|)$ is the system 
	\begin{equation}\label{sys: spatial_Lagrange}
		(\R^{3}, \|\cdot\|, m_{1} /\|q-Z_{1}\| + m_{2} /\|q-Z_{2}\|+f \|q-(Z_{1}+Z_{2})/2\|^{2}),
	\end{equation}
	with $m_1, m_2, f \in \R$, which is the superposition of two Kepler problems and a Hooke problem, with the Kepler centers $Z_1$ and $Z_2$ placed symmetrically with respect to the Hooke center. 
	{As special cases, we have:}
	{\begin{itemize}
			\item {$f= 0$:  The Hooke center is ineffective, and the motion of the particle is governed by the collective attraction or repulsion from the two Kepler centers. This system is the spatial two-center problem.}
			\item  {$m_1 = f=0 $ or $m_2 = f= 0$. The motion of the particle is governed only by the attraction or repulsion of a Kepler center, which is the spatial Kepler problem.}
			\item {When $m_1 = m_2 = 0$, the particle moves under the influence of a Hooke center. The system is the spatial Hooke problem}.
		\end{itemize}
	}
\end{defi}

	They have analogous (hemi)spherical systems on $\mathcal{S}_{SH}$ and on $\mathcal{S}^3$ as their projections \cite{Serret}, \cite{Appel}.  



\begin{defi}
	\label{def: Lagrange_def_hemisph3}
	The {hemi}spherical Lagrange problem on $(\mathcal{S}_{SH}, \| \cdot \|)$ with its two Kepler centers at $Z_1, Z_2\in \mathcal{S}_{SH}$ and its Hooke center $Z_0 \in \mathcal{S}_{SH}$ in the middle of two Kepler centers $Z_1$ and $Z_2$ is the system 
	\begin{equation}\label{sys: sph_Lagrange}
		(\mathcal{S}_{SH}, \|\cdot\|, m_1\cot \theta_{Z_1} +m_2 \cot \theta_{Z_2} + f\tan^2 \theta_{Z_0}),
	\end{equation}
	where $\theta_{Z_1}= \angle q O Z_1, \theta_{Z_2}= \angle q O Z_2, \theta_{Z_0} = \angle q O Z_0$  with $m_1,m_2, f\in \R$.
	{As special cases, we have:}
	{\begin{itemize}
			\item {$f= 0$:  The Hooke center is ineffective, and the motion of the particle is governed by the collective attraction or repulsion from the two Kepler centers. This system is the hemispherical two-center problem.}
			\item  {$m_1 = f=0 $ or $m_2 = f= 0$. The motion of the particle is governed only by the attraction or repulsion of a Kepler center, which is the hemispherical Kepler problem.}
			\item {When $m_1 = m_2 = 0$, the particle moves under the influence of a Hooke center. The system is the hemispherical Hooke problem}.
		\end{itemize}
	}
\end{defi}
{These hemispherical systems can also be defined on the whole sphere by setting the antipodal points of repulsive/attracting centers as attracting/repulsive centers with sign-changed mass factors.} 

\begin{defi}
	\label{def: Lagrange_ded_sph3}
	{The spherical Lagrange problem on $(\mathcal{S}^3, \| \cdot \|)$ with its Kepler centers at $Z_1, Z_2\in \mathcal{S}_{SH}$ and at their antipodal points $Z_1', Z_2' \in \mathcal{S}^3$ and its Hooke centers at $Z_0 \in \mathcal{S}_{SH}$, which is located in the middle of two Kepler centers $Z_1$ and $Z_2$, and at its antipodal point $Z_0' \in \mathcal{S}^3$ is the system 
		\begin{equation}\label{sys: whole_sph_Lagrange}
			(\mathcal{S}^{3}, \|\cdot\|, m_1\cot \theta_{Z_1} -m_1\cot \theta_{Z_1'} +m_2 \cot \theta_{Z_2} -m_2 \cot \theta_{Z_2'}+ f\tan^2 \theta_{Z_0}-f\tan^2 \theta_{Z'_0}),
		\end{equation}
		where $\theta_{Z_1}= \angle q O Z_1, \theta_{Z_2}= \angle q O Z_2, \theta_{Z_0} = \angle q O Z_0$, ${\theta_{Z'_1}= \angle q O Z'_1, \theta_{Z'_2}= \angle q O Z'_2,}$ ${\theta_{Z'_0} = \angle q O Z'_0}$  with $m_1,m_2, f\in \R$.} 
	{As well as hemispherical cases, we have the following systems as subsystems of the Lagrange problem:}
	{\begin{itemize}
			\item {$f= 0$: This system is the spherical two-center problem.}\footnote{Actually, the spherical two-center problem has four centers appearing in two pairs. The same terminology has been chosen for the consistence with other cases.}
			\item  {$m_1 = f=0 $ or $m_2 = f= 0$: This system is the spherical Kepler problem.}
			\item { $m_1 = m_2 = 0$: This system is the spherical Hooke problem}.
		\end{itemize}
	}
\end{defi}

{Indeed, the above systems are projective in the sense of Definition \ref{def: projective} as we will see in the following Lemma \ref{lem: projective}.}

{We fix the centers in $W$ as $$\tilde{Z}_1 = (a,0,0,-1), \tilde{Z}_2 = (-a,0,0,-1), \tilde{Z}_0 = (0,0,0,-1) \in W$$
	for some $a \in \R$, and equip the space $W$ with the norm $\| \cdot \|_a$ {defined} as
	\[
	\| \tilde{q} \|_a:=\sqrt{\frac{x^2}{1+a^2}+y^2+z^2}
	\]
	for $\tilde{q} = (x,y,z, -1) \in W$. } 

\begin{lemma}
	\label{lem: projective}
	The three-dimensional Lagrange problem in $(W, \| \cdot\|_a)$ {with the Hooke center at $(0,0,0,-1)$} is projective.
\end{lemma}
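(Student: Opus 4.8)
The plan is to compute the projected force field $F_S$ on $\mathcal{S}_{SH}$ explicitly, term by term, and to exhibit it as the $\mathcal{S}_{SH}$-gradient of the hemispherical Lagrange force function of Definition \ref{def: Lagrange_def_hemisph3}; this is exactly the condition for projectivity in Definition \ref{def: projective}. By equation \eqref{eq: second derivative correspondence} and the discussion following it, $F_S(q)$ is the orthogonal projection onto $T_q \mathcal{S}_{SH}$ of $\|\tilde{q}\|^{3} F_W(\tilde{q})$, where $F_W = \mathrm{grad}_{\|\cdot\|_a} U_W$. Since both $F_W$ and the tangential projection are linear in the force function $U_W$, and the Lagrange force function splits as a sum of two Kepler terms and one Hooke term, it suffices to verify projectivity for each summand separately and then add the resulting spherical force functions. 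As a preliminary I would record the $\|\cdot\|_a$-gradients: writing $g_a = \mathrm{diag}((1+a^2)^{-1},1,1)$ in the coordinates $(x,y,z)$ of $W$, the inverse metric cancels the anisotropy, giving $F_W = -m_i\,\|\tilde{q}-\tilde{Z}_i\|_a^{-3}(\tilde{q}-\tilde{Z}_i)$ for the Kepler term $m_i\|\tilde{q}-\tilde{Z}_i\|_a^{-1}$ and $F_W = 2f(\tilde{q}-\tilde{Z}_0)$ for the Hooke term; both point along the ordinary Euclidean direction from the respective center, and this is the first place the choice of $\|\cdot\|_a$ enters.

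The heart of the argument, which I expect to be the main computational obstacle, is a set of geometric identities relating the $\|\cdot\|_a$-distances in $W$ to the spherical angles. With $\tilde{Z}_i = (\pm a,0,0,-1)$, $\tilde{Z}_0 = (0,0,0,-1)$, and $q = \tilde{q}/\|\tilde{q}\|$, a direct computation gives
\[
\sin\theta_{Z_i} = \frac{\|\tilde{q}-\tilde{Z}_i\|_a}{\|\tilde{q}\|}, \qquad \cos\theta_{Z_i} = \frac{\langle \tilde{q}, \tilde{Z}_i\rangle}{\|\tilde{q}\|\,\|\tilde{Z}_i\|}, \qquad \cos\theta_{Z_0} = \frac{1}{\|\tilde{q}\|}.
\]
The first identity is precisely what forces the affine normalization built into $\|\cdot\|_a$: the statement that the $\|\cdot\|_a$-distance to a focus equals $\|\tilde{q}\|\sin\theta_{Z_i}$ is what makes the central projection carry the confocal picture in $W$ to the spherical one. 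Establishing these identities, and in particular verifying that both foci share the common norm $\|\tilde{Z}_i\| = \sqrt{1+a^2}$, is where the bulk of the bookkeeping lies.

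With these identities in hand, the remainder is routine. Since $\tilde{q}$ is normal to $\mathcal{S}_{SH}$ at $q$, its tangential projection vanishes, so $\mathrm{Proj}_{T_q\mathcal{S}_{SH}}(\tilde{q}-\tilde{Z}) = -\bigl(\tilde{Z} - \langle \tilde{Z}, q\rangle\, q\bigr)$. Substituting the identities above, and using $\|\tilde{q}\|^{3}\|\tilde{q}-\tilde{Z}_i\|_a^{-3} = \sin^{-3}\theta_{Z_i}$ for the Kepler terms and $\|\tilde{q}\|^{3} = \sec^{3}\theta_{Z_0}$ for the Hooke term, each $F_S$ reduces to an expression of the form $c\,(Z - \cos\theta_Z\, q)$ with $Z = \tilde{Z}/\|\tilde{Z}\|$. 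Comparing with the general gradient formula on the sphere,
\[
\mathrm{grad}_{\mathcal{S}}\, h(\theta_Z) = -\frac{h'(\theta_Z)}{\sin\theta_Z}\,\bigl(Z - \cos\theta_Z\, q\bigr),
\]
identifies term by term $h(\theta_{Z_i}) = \sqrt{1+a^2}\,m_i \cot\theta_{Z_i}$ for the Kepler parts, the constant $\sqrt{1+a^2}=\|\tilde{Z}_i\|$ merely rescaling the mass parameters, and $h(\theta_{Z_0}) = f\tan^2\theta_{Z_0}$ for the Hooke part. Summing, one obtains
\[
F_S = \mathrm{grad}_{\mathcal{S}}\Bigl(\sqrt{1+a^2}\,(m_1\cot\theta_{Z_1}+m_2\cot\theta_{Z_2}) + f\tan^2\theta_{Z_0}\Bigr),
\]
which is exactly a hemispherical Lagrange force function as in Definition \ref{def: Lagrange_def_hemisph3}. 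Hence the projected system is again natural, and the system is projective.
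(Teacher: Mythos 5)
Your proposal is correct and follows essentially the same route as the paper: both compute the tangential projection of $\|\tilde{q}\|^{3}F_{W}(\tilde{q})$ term by term and identify the result with the spherical force function $\hat{m}_1\cot\theta_{Z_1}+\hat{m}_2\cot\theta_{Z_2}+f\tan^2\theta_{Z_0}$. The only difference is in the bookkeeping: the paper compares magnitudes using the feet of perpendiculars $G_i$ and the relation $\|\tilde{q}-\tilde{Z}_i\|_a=\|\tilde{q}-\tilde{Z}_i\|\,\|G_i\|/\sqrt{1+a^2}$, whereas you compare the full vectors via the equivalent identity $\sin\theta_{Z_i}=\|\tilde{q}-\tilde{Z}_i\|_a/\|\tilde{q}\|$ together with the spherical gradient formula, which has the minor advantage of also verifying the direction of the projected force explicitly.
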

\begin{proof}

	
	{{We consider}  the spatial Lagrange problem in $(W, \|\cdot \|_a )$ with the centers $\tilde{Z}_1, \tilde{Z}_2, \tilde{Z}_0$.}
	The force field $F_W$ of this system is given by 
	\[
	F_W(\tilde{q}) = - m_1 \| \tilde{q}  - \tilde{Z}_1 \|_a^{-3}(\tilde{q}  - \tilde{Z}_1) - m_2 \| \tilde{q}  - \tilde{Z}_2 \|_a^{-3}(\tilde{q}  - \tilde{Z}_2) + 2f (\tilde{q} - \tilde{Z}_0) 
	\]
	We now plug this into the right-hand side of \eqref{eq: second derivative correspondence} and compute its projection
	to the tangent space of $T_q \mathcal{S}_{SH}$. {Observe} that we only need to consider the first term of {the right-hand side of} \eqref{eq: second derivative correspondence} since the second term vanishes after the projection to $T_q \mathcal{S}_{SH}$.

	Let points $q \in \mathcal{S}_{SH}$ and $ \tilde{q} \in W$ {be related via the central projection}:
	\[
	q=(q_1,q_2,q_3,q_4) \mapsto  \tilde{q}=(x,y,z,-1),
	\]
	where $x = -\frac{q_1}{q_4}, y = \frac{q_2}{q_4}, z= -\frac{q_3}{q_4}$.
	
	The corresponding force field on $\mathcal{S}_{SH}$ is determined by the projection of $\| \tilde{q} \|^3 F_W(\tilde{q})$ to $T_{q}\mathcal{S}_{SH}$.

	\begin{figure}
		\begin{tikzpicture}
			\begin{scope}[scale= 1.0]
				\draw[-,>=stealth, thick] (-3.5,-2)--(2.5,-2)node[right]{$\ell_i$}; 
				\coordinate (l) at (2.5,-2);
				\coordinate (O) at (0, 0);
				\fill[black] (O) circle (0.05);
				\coordinate (G) at (0, -2);
				\fill[black] (G) circle (0.05);
				\coordinate (Zt) at (-2, -2);
				\fill[black] (Zt) circle (0.05);
				\coordinate (qt) at (-3, -2);
				\fill[black] (qt) circle (0.05);
				
				\draw (O) circle [radius=1.5];
				
				\coordinate (Z) at (-1.06, -1.06);
				\fill[black] (Z) circle (0.05) node[below]{$Z_i$};
				\coordinate (q) at (-1.24, -0.83);
				\fill[black] (q) circle (0.05) node[left]{$q_i$};
				
				\coordinate (P) at ($(G)!.3cm!(l)!.3cm!90:(l)$);
				\draw[thick, red] ($(G)!(P)!(l)$)--(P);
				\draw[thick, red] ($(G)!(P)!(O)$)--(P);
				
				\draw [-] (O)node[right]{O} -- (G) node[below]{$G_i$};
				\draw (O) -- (Zt) node[below]{$\tilde{Z}_i$};
				\draw (O) -- (qt) node[below]{$\tilde{q}_i$};
			\end{scope}

			\begin{scope}[xshift = 7.0cm, scale=1.0]
				\coordinate (Zt) at (-2,-1);
				\coordinate (-Zt) at (2,-1);
				\fill[black] (Zt) circle (0.05) node[below]{$\tilde{Z}_i$};
				\coordinate (Z0) at (0.25,-1);
				\fill[black] (Z0) circle (0.05) node[below]{$\tilde{Z}_0$};
				\draw[-,>=stealth] (-3,-1)--(2,-1);
				\coordinate (G) at (-0.5,-2.0);
				\fill[black] (G) circle (0.05) node[below]{$G_i$};
				\draw[-,>=stealth, thick] (-3,-0.3)--(1.5,-3.4)node[right]{$\ell_i$} ;
				\coordinate (l) at (1.5,-3.4);
				\draw[-,>=stealth] (Z0)--(G);
				\coordinate (O) at (0.25,1);
				\fill[black] (O) circle (0.05) node[right]{$O$};
				\draw[-,>=stealth] (O)--(Z0);
				\draw[-,>=stealth] (O)--(G);
				
				\coordinate (P1) at ($(Z0)!.2cm!(-Zt)!.2cm!90:(-Zt)$);
				\draw[thick, red] ($(Z0)!(P1)!(-Zt)$)--(P1);
				\draw[thick, red] ($(Z0)!(P1)!(O)$)--(P1);
				
				\coordinate (P2) at ($(G)!.2cm!(l)!.2cm!90:(l)$);
				\draw[thick, red] ($(G)!(P2)!(l)$)--(P2);
				\draw[thick, red] ($(G)!(P2)!(Z0)$)--(P2);
			\end{scope}
		\end{tikzpicture}
		\caption{{Sectional Views Containing $\ell_i$ and $O$.}}
		\label{fig: section_views}
	\end{figure}
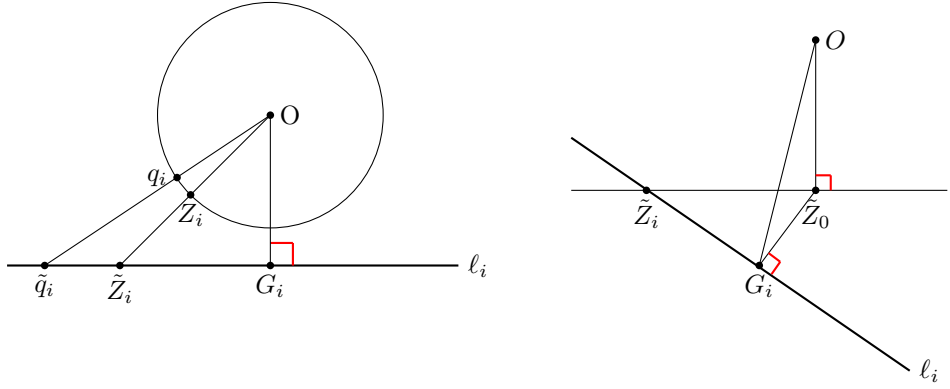

	Let $\ell_i$ be the line passing two points $\tilde{Z}_i$ and $\tilde{q}$, {and let $G_i $ be the point in $\ell_i$ such that $OG_i$ is perpendicular to  $\ell_i$, for each $i = 1,2$.} {See Figure \ref{fig: section_views} for their geometrical illustrations.}
	The projection of $\| \tilde{q} \|^3 F_W(\tilde{q})$  is computed as
	\scriptsize
	\[
	\| \tilde{q} \|^3\left(- m_1 \| \tilde{q}  - \tilde{Z}_1 \|_a^{-3}(\tilde{q}\  - \tilde{Z}_1)\cdot \frac{\|G_1\|}{\| \tilde{q}\|} - m_2 \| \tilde{q}  - \tilde{Z}_2 \|_a^{-3}(\tilde{q}  - \tilde{Z}_2)\cdot \frac{\|G_2\|}{\| \tilde{q}\|} + 2f (\tilde{q} - \tilde{Z}_0)\cdot \frac{\|Z_0\|}{\| \tilde{q}\|}\right).
	\]
	\normalsize
	Using the following equations:
	\[
	\|\tilde{q} - \tilde{Z}_1\|_a = \|\tilde{q} - \tilde{Z}_1\| \cdot \frac{\|G_1\|}{\sqrt{1+a^2}}, \quad	\|\tilde{q} - \tilde{Z}_2\|_a = \|\tilde{q} - \tilde{Z}_2\| \cdot \frac{\|G_2\|}{\sqrt{1+a^2}},
	\]
	the norm of the projected force is computed as 
	\begin{align*}
		&\frac{| \hat{m}_1 | \|\tilde{q}\|^2 \| \tilde{Z}_1\|^2}{\|\tilde{q} - \tilde{Z}_1\|^{2} \| G_1\|^{2}} + \frac{| \hat{m}_2 | \|\tilde{q}\|^2 \| \tilde{Z}_2\|^2}{\|\tilde{q} - \tilde{Z}_2\|^{2} \| G_2\|^{2}} + 2 |f| \|\tilde{q}\|^2 \| \tilde{q} - \tilde{Z}_0 \|  \\
		=& | \hat{m}_1 | \sin^{-2} \theta_{\tilde{Z}_1} + | \hat{m}_2 | \sin^{-2} \theta_{\tilde{Z}_2} + 2 |f| \frac{\sin \theta_{\tilde{Z}_0}}{\cos^3 \theta_{\tilde{Z}_0}},
	\end{align*}
	where $\hat{m}_1 = m_1 \sqrt{1 + a^2}, \hat{m}_2 = m_2\sqrt{1 + a^2}$ and $\theta_{\tilde{Z}_1}=\angle \tilde{Z}_1 O \tilde{q}$, $\theta_{\tilde{Z}_2}=\angle \tilde{Z}_2O \tilde{q}$, and $\theta_{\tilde{Z}_0}=\angle \tilde{Z}_0O\tilde{q}$.
	Therefore, the corresponding system on $\mathcal{S}_{SH}$ is the three-dimensional spherical Lagrange problem with two Kepler centers at
	$$Z_1=\left(\frac{a}{\sqrt{1+a^2}},0,0, -\frac{
		1}{\sqrt{1+a^2}}\right),Z_2=\left(-\frac{a}{\sqrt{1+a^2}},0,0, -\frac{
		1}{\sqrt{1+a^2}}\right)\in \mathcal{S}_{SH}$$ and a Hooke center at $Z_0= (0,0,0,-1)\in \mathcal{S}_{SH}$ defined with the force function 
	\[
	U(q)=\frac{\hat{m}_1}{\tan \theta_{Z_1}} + \frac{\hat{m}_2}{\tan \theta_{Z_2}} + f \tan^2 \theta_{Z_0}, 
	\]
	where  $\theta_{Z_1}=\angle Z_1 O q$, $\theta_{Z_2}=\angle Z_2Oq$, and $\theta_{Z_0}=\angle Z_0Oq$.

\end{proof}

{We now make a short digression on Poisson brackets. For two smooth functions $F, G$ on $T^{*} M$, their Poisson bracket $\{ F,G  \}$ 
	is defined as
}
\[
\{ F,G  \} =\dfrac{d}{dt}\Bigr|_{t=0} F(\gamma^t_{G}) =\omega (X_F,X_G),
\]
{in which $\omega$ is the canonical symplectic form on $T^{*} M$.}

{If $\{ F,G  \}$=0, then the functions $F$ and $G$ are \emph{in involution}. Clearly in this case, $F$ is preserved along the flow of $X_{G}$ and vice versa.}
}

{We now pull-back the Poisson bracket to $TM$ as follows:}
{For the Riemannian manifold $(M, g)$, we can identify $TM$ and $T^{*} M$ via the isomorphism
\[
\rho: T_{q}M  \to  T^*_qM, \quad  v \mapsto p:=g(v, \cdot ).
\]
}
{For any smooth functions $F,G$ on $T^*M$, we define {their pull-backs} on $TM$ as
\[
\overset{\circ}F:= F \circ \rho, \quad \overset{\circ}{G} := G \circ \rho,
\]
{We equally pull back the Hamiltonian flow $\gamma_G^t$ to $TM$ via }
\[
 {\rho^{-1} \circ \gamma_G^t \circ \rho.}
\]
{Now we define the bracket} 
\[
\{\overset{\circ}{F},\overset{\circ}{G}  \}_{TM}:= \dfrac{d}{dt}\Bigr|_{t=0} \overset{\circ}{F}\circ  \rho^{-1} \circ \gamma_G^t \circ \rho.
\]
{We prefer to pull back the Poisson bracket to $TM$, since this facilitates our discussions based on projective dynamics in this article.}
The following easy proposition states the equivalence between {the commutativity with respect to the } {pulled-back} bracket $\{\cdot, \cdot \}_{TM}$ and the {usual} Poisson bracket on $T^{^*} M$.  
}
\begin{prop}
\label{prop: Poisson}
{For any smooth functions $F,G$ on $T^*M$, we have
	\[
	\{\overset{\circ}{F},\overset{\circ}{G} \}_{TM} =0 \Leftrightarrow \{F,G  \}=0.
	\]
}
\end{prop}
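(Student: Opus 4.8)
The plan is to show that the $TM$-bracket is nothing but the \emph{pullback} of the cotangent Poisson bracket under the musical isomorphism $\rho$, i.e. that $\{\overset{\circ}{F},\overset{\circ}{G}\}_{TM} = \{F,G\}\circ\rho$, and then to invoke the fact that $\rho: TM \to T^{*}M$ is a (fiberwise linear) diffeomorphism, so that a function on $T^{*}M$ vanishes identically precisely when its $\rho$-pullback does. Since $g$ is a Riemannian metric, $\rho$ is pointwise a linear isomorphism and hence a global diffeomorphism; this is the only structural fact needed.

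The key computational step is to unwind the definition. First I would observe that $\overset{\circ}{F}\circ\rho^{-1} = F\circ\rho\circ\rho^{-1} = F$ as functions on $T^{*}M$. Substituting this into the definition of the $TM$-bracket collapses it to
\[
\{\overset{\circ}{F},\overset{\circ}{G}\}_{TM} = \frac{d}{dt}\Bigr|_{t=0} F\circ\gamma_G^t\circ\rho.
\]
Evaluating at a vector $v\in T_qM$ and writing $p:=\rho(v)\in T^{*}_qM$, this reads
\[
\{\overset{\circ}{F},\overset{\circ}{G}\}_{TM}(v) = \frac{d}{dt}\Bigr|_{t=0} F(\gamma_G^t(p)),
\]
which, by the very definition of the Poisson bracket recalled above, equals $\{F,G\}(p) = \{F,G\}(\rho(v))$. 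Thus $\{\overset{\circ}{F},\overset{\circ}{G}\}_{TM} = \{F,G\}\circ\rho$ as claimed.

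The equivalence then follows immediately: since $\rho$ is surjective, $\{F,G\}\circ\rho$ vanishes at every $v\in TM$ if and only if $\{F,G\}$ vanishes at every $p\in T^{*}M$, which settles both implications at once.

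I do not expect a serious obstacle here, and the author's description of the statement as \emph{easy} is accurate: it is a bookkeeping identity transporting the cotangent Poisson structure to $TM$ along $\rho$. The only point requiring a moment of care is the clean separation of the three maps $\rho$, $\gamma_G^t$, $\rho^{-1}$ in the definition and the cancellation $\overset{\circ}{F}\circ\rho^{-1}=F$; once that is in place, the identification with $\{F,G\}\circ\rho$ is forced. I would present it in the three short steps above rather than expanding into coordinates, since coordinates would only obscure the conceptual content that $\{\cdot,\cdot\}_{TM}$ is a transported copy of the canonical bracket on $T^{*}M$.
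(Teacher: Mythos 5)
Your proof is correct and follows essentially the same route as the paper: both unwind the definition via the cancellation $\overset{\circ}{F}\circ\rho^{-1}=F$ to obtain $\{\overset{\circ}{F},\overset{\circ}{G}\}_{TM}=\{F,G\}\circ\rho$. Your version merely makes explicit the final step (surjectivity of $\rho$ yields the equivalence) that the paper leaves implicit.
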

\begin{proof}
{From the definition, it follows that 
	\[
	\begin{split}
	\{\overset{\circ}{F},\overset{\circ}G  \}_{TM} &= \dfrac{d}{dt}\Bigr|_{t=0} \overset{\circ}{F}\circ  \rho^{-1} \circ \gamma_G^t \circ \rho \\
	&= \dfrac{d}{dt}\Bigr|_{t=0} {F}\circ \rho \circ \rho^{-1} \circ \gamma_G^t \circ \rho \\
	&=  \dfrac{d}{dt}\Bigr|_{t=0} F\circ \gamma_{G}^t \circ \rho \\
	&= \{F,G\}\circ \rho.
	\end{split}
	\]
}
\end{proof}

{From now on we shall abbreviate the notation and write $\{\overset{\circ}{F},\overset{\circ}{G}  \}$ instead of $\{\overset{\circ}{F}, \overset{\circ}{G}  \}_{TM}$ for two smooth functions $\overset{\circ}{F}, \overset{\circ}{G}$ on $TM$.}

The projective property shown in Lemma \ref{lem: projective} provides one additional first integral for each spherical and spatial Lagrange problem. To describe them explicitly, without loss of generality, we assume that the spatial Lagrange problem in $(W, \| \cdot \|_a)$ has its centers at 
$$\tilde{Z}_1 = (a,0,0,-1), \tilde{Z}_2 = (-a,0,0,-1), \tilde{Z}_0 = (0,0,0,-1),$$ and the spherical Lagrange problem on $\mathcal{S}_{SH}$ has its center at 
\scriptsize
\[
Z_1=\left(\frac{a}{\sqrt{1+a^2}},0,0, -\frac{
	1}{\sqrt{1+a^2}}\right),Z_2=\left(-\frac{a}{\sqrt{1+a^2}},0,0, -\frac{
	1}{\sqrt{1+a^2}}\right), Z_0 = (0,0,0,-1).
\]
\normalsize
{The energy of the spherical Lagrange problem on $\mathcal{S}_{SH}$ given by 
\normalsize
\begin{equation}
\label{eq: E_sph}
E_{sph}:= \frac{\| {q'}\| ^2}{2} - \frac{\hat{m}_1}{\tan \theta_{Z_1}} - \frac{\hat{m}_2}{\tan \theta_{Z_2}} - f \tan^2 \theta_{Z_0}, 
\end{equation}
\normalsize
induces an additional first integral for the spatial Lagrange problem in $W$ which  independent of energy for the spatial system
\begin{equation}
\label{eq: E_sp}
E_{sp}:=  \frac{\| \dot{\tilde{q}}\|_a ^2}{2} - \frac{m_1}{\| \tilde{q} - \tilde{Z}_1 \|_a}
- \frac{m_2}{\| \tilde{q} - \tilde{Z}_2 \|_a} - f \| \tilde{q} - \tilde{Z}_0 \|_a^2,
\end{equation}
{where $m_1= \frac{\hat{m}_1}{\sqrt{1+a^2}}, m_2= \frac{\hat{m}_2}{\sqrt{1+a^2}} $.}
{	Taking the time change given by \eqref{eq: time_parameter_change} into account, the kinetic part of the spherical energy $E_{sph}$ can be written in the chart of $W$ as
	\small
	\begin{align*}
	\tilde{K}_{sph} &:= \frac{(y^2 + z^2 + 1)\dot{x}^2 + (x^2 + z^2 + 1)\dot{y}^2 + (x^2 + y^2 + 1)\dot{z}^2 -2 xy \dot{x} \dot{y} - 2 xz \dot{x} \dot{z} - 2 yz \dot{y}\dot{z}}{2}\\
	& = \frac{\dot{x}^2 +  \dot{y}^2 + \dot{z}^2 + (x \dot{y} - y \dot{x})^2 + (y \dot{z} - z \dot{y})^2 + (z \dot{x} - x \dot{z})^2}{2}.
	\end{align*}
	\normalsize
	By also rewriting the potential part of $E_{sph}$ in the same chart, we obtain the following expression for the spherical energy in the chart of $W$:
	\scriptsize
	\begin{equation}
	\label{eq: E_sph_proj}
	\begin{split}
	\tilde{E}_{sph} := &\tilde{K}_{sph} - f (x^2 + y^2 + z^2)- \frac{\hat{m}_1( 1 + ax)}{(x- a)^2 + (1 + a^2)(y^2 + z^2 )}
	- \frac{\hat{m}_2 (1 - ax)}{(x+ a)^2 + (1 + a^2)(y^2 + z^2 )}.
	\end{split}
	\end{equation}
	\normalsize
	Also, the angular momentum 
	\begin{equation}
	\label{eq: L_yz}
	L_{yz} := y \dot{z} - z \dot{y}
	\end{equation}
	is conserved the spatial Lagrange problem, since the three centers $\tilde{Z}_1, \tilde{Z}_2$, and $\tilde{Z}_0$ lie in the $x$-axis of $W$. 
}

{Similarly, the hemispherical Lagrange problem on $\mathcal{S}_{SH}$ has the two first integrals besides its own energy $E_{sph}$, namely, the projections of the spatial energy $E_{sp}$ and the angular momentum $L_{yz}$. {Using external coordinates for $\mathcal{S}^3$, these} can be represented as 
	\small
	\begin{equation}
	\label{eq: E_sp_proj}
	\begin{split}
	\hat{E}_{sp} := &\frac{(q_4 q_1'- q_1 q_4')^2}{2  (1+a^2)} + \frac{(q_4 q_2'- q_2 q_4')^2}{2 } + \frac{(q_4 q_3'- q_3 q_4')^2}{2  }  - f \left(\frac{q_1^2}{q_4^2(1+a^2)} + \frac{q_2^2}{q_4^2} + \frac{q_3^2}{q_4^2} \right)\\
	&- m_1\left(\frac{(-q_1/q_4-a)^2}{1+a^2} + \frac{q_2^2+q_3^2}{q_4^2}  \right)^{-\frac{1}{2}}-m_2\left(\frac{(-q_1/q_4+a)^2}{1+a^2} + \frac{q_2^2+q_3^2}{q_4^2}  \right)^{-\frac{1}{2}}
	\end{split}
	\end{equation}
	\normalsize
	and 
	\begin{equation}
	\label{eq: L_yz_proj}
	\hat{L}_{yz} := q_3 q_2' - q_2 q_3'.
	\end{equation}
}

{
	As a corollary of Lemma \ref{lem: projective}, {we obtain} the integrability of these two corresponded systems related by the central projection.}

\begin{cor}
	\label{cor: integrability_dim_3}
	{The Lagrange problems in $\R^3$, on $\mathcal{S}_{SH}$, and on $\mathcal{S}^3$ are integrable. More specifically, for the spatial Lagrange problem in $(W, \| \cdot\|_a)$ with its centers at $Z_0 =(0,0,0,-1), \tilde{Z}_1=(a, 0,0,-1), \tilde{Z}_2= (-a,0,0,-1)$, its own energy $E_{sp}$ \eqref{eq: E_sp}, the projected spherical energy $\tilde{E}_{sph}$ \eqref{eq: E_sph_proj}, and the angular momentum $L_{yz}$ \eqref{eq: L_yz}
	are three independent first integrals which are mutually in involution. For the spherical system on $\mathcal{S}_{SH}$ with its center at 
	\small
	\[
	Z_0 =(0,0,0,-1), Z_1=\left(\dfrac{a}{1+a^2}, 0,0,-\dfrac{1}{1+a^2}\right), Z_2= \left(-\dfrac{a}{1+a^2},0,0,-\dfrac{1}{1+a^2}\right),
	\]
	\normalsize
	 its energy $E_{sph}$ \eqref{eq: E_sph}, the projected spatial energy $\hat{E}_{sp}$ \eqref{eq: E_sp_proj}, and the angular momentum $\hat{L}_{yz}$ \eqref{eq: L_yz_proj} are three independent first integrals which are mutually in involution. 
	}
\end{cor}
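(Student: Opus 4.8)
The plan is to treat the two concrete systems separately — the spatial Lagrange problem in $(W,\|\cdot\|_a)$ and the hemispherical Lagrange problem on $\mathcal{S}_{SH}$ — and for each verify the three required properties of the listed quantities: that they are first integrals, that they are pairwise in involution, and that they are functionally independent. Since $\dim W = \dim \mathcal{S}_{SH} = 3$, these three properties together give Liouville integrability. The integrability on the whole sphere $\mathcal{S}^3$ will then follow because, by Definition \ref{def: Lagrange_ded_sph3}, the whole-sphere Lagrange problem restricts on $\mathcal{S}_{SH}$ to the hemispherical one, and the same three integrals extend to $\mathcal{S}^3$ by the antipodal symmetry of the construction.

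First I would establish the first-integral property, most of which is already recorded in the discussion preceding the statement. The energies $E_{sp}$ in \eqref{eq: E_sp} and $E_{sph}$ in \eqref{eq: E_sph} are first integrals of their own systems, as is the case for any natural mechanical system. By Lemma \ref{lem: projective} and the computation leading to it, the trajectories of the spatial and of the hemispherical Lagrange problems correspond under the central projection up to the time reparametrization \eqref{eq: time_parameter_change}. Since constancy of a function along a curve is invariant under reparametrization, the pulled-back spherical energy $\tilde{E}_{sph}$ in \eqref{eq: E_sph_proj} is constant along the spatial trajectories, hence is a first integral of the spatial system; symmetrically $\hat{E}_{sp}$ in \eqref{eq: E_sp_proj} is a first integral of the hemispherical system. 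Finally, because all three centers lie on the $x$-axis (resp. the $q_1$-axis), both systems are invariant under the group of rotations in the $yz$-plane (resp. the $q_2q_3$-plane), so the angular momenta $L_{yz}$ in \eqref{eq: L_yz} and $\hat{L}_{yz}$ in \eqref{eq: L_yz_proj} are conserved by Noether's theorem.

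Next I would check involution, where by Proposition \ref{prop: Poisson} it suffices to verify the vanishing of the pulled-back brackets on $TW$ and on $T\mathcal{S}_{SH}$. For the spatial system, $\{E_{sp}, \tilde{E}_{sph}\}=0$ follows at once from the previous paragraph: $\tilde{E}_{sph}$ is a first integral of the flow of $E_{sp}$, so its derivative along $X_{E_{sp}}$ vanishes. The remaining two brackets $\{E_{sp}, L_{yz}\}$ and $\{\tilde{E}_{sph}, L_{yz}\}$ vanish because the Hamiltonian flow of $L_{yz}$ is exactly the $yz$-rotation, under which both energies are invariant; for $\tilde{E}_{sph}$ one reads off from \eqref{eq: E_sph_proj} that its potential part depends only on $x$ and $y^2+z^2$ while $\tilde{K}_{sph}$ is manifestly invariant under this rotation. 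The identical three arguments, with the two energies interchanged, give involution of $E_{sph}, \hat{E}_{sp}, \hat{L}_{yz}$ on $T\mathcal{S}_{SH}$.

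The last step, independence, is where I expect the only genuine computation to lie, and I regard it as the main obstacle, since the first-integral and involution properties are structural consequences of the projective correspondence and the rotational symmetry rather than of any special feature of the potentials. I would compare the velocity dependence of the three functions: $E_{sp}$ is quadratic in the velocity for the flat form $\|\cdot\|_a$, $\tilde{E}_{sph}$ is quadratic for the genuinely different form $\tilde{K}_{sph}$, and $L_{yz}$ is linear in the velocity. Evaluating $dE_{sp}$, $d\tilde{E}_{sph}$, $dL_{yz}$ in the velocity directions at a generic point yields two functionals of $\dot{\tilde q}$ attached to two distinct quadratic forms together with an independent linear functional, and I would show the resulting $3\times 3$ minor is nonzero on a dense open set; equivalently, one may restrict to a meridian plane through the $x$-axis, where $L_{yz}=0$ and the two energies reduce to the planar flat and spherical two-center energies already known to be independent. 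Either way this confirms three independent first integrals in involution and hence the asserted integrability.
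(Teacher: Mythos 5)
Your proposal is correct and follows essentially the same route as the paper: involution is obtained exactly as in the paper's proof (two brackets vanish because the projected quantities are first integrals of the flow of the system's own energy, and the third vanishes because the energies are invariant under the rotation generated by the angular momentum), while independence is established by the same device of examining the velocity-derivative block of the Jacobian, which the paper writes out explicitly as a $3\times 3$ matrix of rank 3. The only cosmetic difference is your appeal to antipodal symmetry for the extension to $\mathcal{S}^3$, where the paper instead extends the kinetic and potential parts analytically outside the singular set; both amount to the same observation.
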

\begin{proof}

	{
		To check that {$E_{sp}$, $\tilde{E}_{sph}$, and $L_{yz}$ are functionally independent}, we will show {that the rank of the Jacobian }
		\begin{equation}
		\label{eq: Jacobian}
		\begin{pmatrix}
		\frac{\partial  E_{sp}}{\partial  x} & \frac{\partial  E_{sp}}{\partial  y}  & \frac{\partial  E_{sp}}{\partial  z} & \frac{\partial  E_{sp}}{\partial  \dot{x}} & \frac{\partial  E_{sp}}{\partial  \dot{y}} & \frac{\partial  E_{sp}}{\partial  \dot{z}} \\
		\frac{\partial  \tilde{E}_{sph}}{\partial  x} & \frac{\partial  \tilde{E}_{sph}}{\partial  y}  & \frac{\partial  \tilde{E}_{sph}}{\partial  z} & \frac{\partial  \tilde{E}_{sph}}{\partial  \dot{x}} & \frac{\partial  \tilde{E}_{sph}}{\partial  \dot{y}} & \frac{\partial  \tilde{E}_{sph}}{\partial  \dot{z}} \\
		\frac{\partial  L_{yz}}{\partial  x} & \frac{\partial  L_{yz}}{\partial  y}  & \frac{\partial  L_{yz}}{\partial  z} & \frac{\partial  E_{sp}}{\partial  \dot{x}} & \frac{\partial  L_{yz}}{\partial  \dot{y}} & \frac{\partial  L_{yz}}{\partial  \dot{z}} 
		\end{pmatrix}
		\end{equation}
		{is 3. This is established by observing that} the submatrix 
		\small
		\begin{align*}
		&
		\begin{pmatrix}
		\frac{\partial  E_{sp}}{\partial  \dot{x}} & \frac{\partial  E_{sp}}{\partial  \dot{y}} & \frac{\partial  E_{sp}}{\partial  \dot{z}} \\
		\frac{\partial  \tilde{E}_{sph}}{\partial  \dot{x}} & \frac{\partial  \tilde{E}_{sph}}{\partial  \dot{y}} & \frac{\partial  \tilde{E}_{sph}}{\partial  \dot{z}} \\
		\frac{\partial  E_{sp}}{\partial  \dot{x}} & \frac{\partial  L_{yz}}{\partial  \dot{y}} & \frac{\partial  L_{yz}}{\partial  \dot{z}} 
		\end{pmatrix}
		= 
		\begin{pmatrix}
		\dot{x}/(1+a^2)  & \dot{y} & \dot{z} \\
		\nu_x \dot{x} - xy \dot{y} - xz \dot{z} & \nu_y \dot{y} - xy \dot{x} - yz \dot{z} & \nu_z \dot{z} - xz \dot{x} - yz \dot{y}  \\
		0 & -z  & y
		\end{pmatrix}
		\end{align*}
		{has already} rank 3,
		\normalsize
		where {$$\nu_x:=y^2 + z^2 + 1, \nu_y:=x^2 + z^2 + 1, \nu_z:=x^2 + y^2 + 1.$$}

		We now check that first integrals $E_{sp}$, $\tilde{E}_{sph}$, and $L_{yz}$ are in involution. Since they all are conserved along the flow {of $E_{sp}$}, we have 
		\[
		\{E_{sp}, \tilde{E}_{sph}  \}= 0, \quad \{ E_{sp}, L_{yz}  \} = 0.
		\]
	}
	{Note that we use the abbreviated notation for the {induced Poisson} bracket {for} functions on $T W$. {See} Proposition \ref{prop: Poisson}.}
	
	 { The rotation in $TW$ generated by $L_{yz}$ {sends the point $(x,y,z,\dot{x}, \dot{y},\dot{z})$ to $$(x, \cos \phi \cdot  y - \sin \phi \cdot z, \sin\phi \cdot y {+} \cos \phi \cdot z, \dot{x}, \cos \phi \cdot  \dot{y} - \sin \phi \cdot \dot{z}, \sin\phi \cdot \dot{y} {+} \cos \phi \cdot \dot{z}) $$ for any $\phi \in  \R / 2 \pi$. We see that $\tilde{E}_{sph}$} is invariant under this rotation
		\normalsize
		which implies 
		\[
		\{ \tilde{E}_{sph}, L_{yz}  \} =  0.
		\]
	}

{For the hemispherical system,} the first integrals $E_{sph}, \hat{E}_{sp}$, and $\hat{L}_{yz}$ are functionally independent. Indeed, the central projection from $\mathcal{S}_{SH}$ to $W$ is a diffeomorphism, and it induces an isomorphism between $T \mathcal{S}_{SH}$ and $T W$, thus the rank of Jacobian
	\[
	\begin{pmatrix}
	\nabla^T \hat{E}_{sp} \\
	\nabla^T E_{sph}\\
	\nabla^T \hat{L}_{yz}
	\end{pmatrix}
	\]
	is equal to the rank of Jacobian given in \eqref{eq: Jacobian}. Therefore, the functional independence follows from the previous computation.  
	
	We are now going to see that they are also in involution.
	We denote the Poisson bracket defined on functions on $T^* \mathcal{S}^3$ by $\{\cdot, \cdot  \}_{\mathcal{S}^3}$ and {as notation,} we use the same bracket for functions on $T \mathcal{S}^3$ due to {Proposition \ref{prop: Poisson}}.
	
	Since the planar energy $\hat{E}_{sp}$ and the angular momentum $\hat{L}_{yz}$ are first integrals of the system, we already have
	\[
	\{E_{sph}, \hat{E}_{sp}  \}_{\mathcal{S}^3}= 0, \quad\{  E_{sph}, \hat{L}_{yz}  \}_{\mathcal{S}^3}=0.
	\]
	
	We are going to check that $\hat{E}_{sp}$ and $\hat{L}_{yz}$ are also in involution.
	The angular momentum $\hat{L}_{yz}$ generates the {$S^{1}$-family of} rotations {on $T\mathcal{S}^3$ sending the point $(q_2,q_3,q_2',q_3')$ to} the point
	\small
	\[
	{ (\cos \phi \cdot  q_2 - \sin \phi \cdot q_3, \sin\phi \cdot q_2 {+} \cos \phi \cdot q_3,\cos \phi \cdot  q_2' - \sin \phi \cdot q_3', \sin\phi \cdot q_2' {+} \cos \phi \cdot q_3').}
	\]
	\normalsize
	{The function $\hat{E}_{sp}$ is invariant under these rotations.}
	Therefore, we obtain
	\[
	\{ \hat{E}_{sp}, \hat{L}_{yz} \}_{\mathcal{S}^3}= 0.
	\] 
	{{The kinetic part of the} projected spatial energy $\hat{E}_{sp}$:
	\[
	\frac{(q_4 q_1'- q_1 q_4')^2}{2  (1+a^2)} + \frac{(q_4 q_2'- q_2 q_4')^2}{2 } + \frac{(q_4 q_3'- q_3 q_4')^2}{2  } 
	\]
	can be analytically extended to the whole sphere $\mathcal{S}^3$, and the potential part 
	\scriptsize
	\[
	- \frac{m_1}{\sqrt{\frac{(-q_1/q_4-a)^2}{1+a^2} + \frac{q_2^2+q_3^2}{q_4^2}}} -\frac{m_2}{\sqrt{\frac{(-q_1/q_4+a)^2}{1+a^2} + \frac{q_2^2+q_3^2}{q_4^2}}} - f \left(\frac{q_1^2}{q_4^2(1+a^2)} + \frac{q_2^2+q_3^2}{q_4^2} \right)
	\]
	\normalsize
	can {also} be extended to the whole sphere, outside of the singularities which are Kepler centers and their antipodal points and the horizontal equator 
	$${\{(q_1,q_2,q_3,q_4)\in \mathcal{S}^3 \mid q_4 = 0\}}.$$}
\end{proof}
\section{{Projective Billiards Systems and Main Results in the Three-Dimensional Case}}
\label{sec: main_resutls}

{We now add 
{reflection walls} and consider the corresponding billiard systems. Note that the Lagrange problem has two Kepler centers and a Hooke center. {Not all of them are assumed to be effective. The line of centers is the line that passes through all effective centers.}} {We shall show via projective dynamics the integrability of general Lagrangian billiards {in $\R^3$ and on $\mathcal{S}^3$} with confocal quadrics focused at the Kepler centers.} As opposed to the more studied cases of free billiards and Hooke billiards, these seem to be the only integrable case. 

{We fix the Kepler centers at $\tilde{Z}_1, \tilde{Z}_2 \in \R^3$. We assume that they are distinct: $\tilde{Z}_{1} \neq \tilde{Z}_{2}$.} {Without loss of generality, we set $\tilde{Z}_{1}=(-a, 0, 0), \tilde{Z}_{2}=(a, 0, 0)$. We equip $\R^{3}$ with a norm, not necessarily the standard Euclidean one. 
}

{With foci we mean two points such that the quadric is defined as the locus of points for which either the sum or the absolute value of the difference of the distances to those two points is constant.}
{Quadrics with two foci are either spheroids or circular hyperboloids of two sheets.}

\begin{defi}
	A quadric in $\R^3$ is called confocal if it has two foci at $\tilde{Z}_{1}, \tilde{Z}_{2}$.

\end{defi}

{After normalization, we may write {a confocal quadric}
\[
\frac{x^2}{A^2} + \frac{y^2}{B^2} + \frac{z^2}{B^2} = 1, \quad {A,B>0} 
\]
for a spheroid, and 
\[
\frac{x^2}{A^2} - \frac{y^2}{B^2} - \frac{z^2}{B^2} = 1, \quad {A,B>0} 
\]
for a hyperboloid of two sheets. We understand that they are both focused at $\tilde{Z}_{1}=(-a, 0, 0), \tilde{Z}_{2}=(a, 0, 0)$. 
We remind that a relationship between the parameters $A, B$ and $a$ can be made explicit with the help of the norm on $\R^{3}$.
}

{We now take two Kepler centers $Z_1$ and $Z_2$ on $\mathcal{S}^{3}$. Without loss of generality, we assume that they lie in $\mathcal{S}_{SH}$ and are the projections of $\tilde{Z}_1$ and $\tilde{Z}_2$:
\[
Z_1= \left( \frac{a}{\sqrt{1+a^2}}, 0,0, -\frac{1}{\sqrt{1+a^2}} \right), Z_2= \left( -\frac{a}{1+a^2}, 0,0, -\frac{1}{\sqrt{1+a^2}} \right).
\]
We equip $\mathcal{S}^3$ with the round metric induced from the Euclidean metric of $\R^4$.} {For a quadric in $\mathcal{S}^3$, we define its foci analogously as for quadrics in $\R^3$, but now using the spherical distances.}
\begin{defi}
	\label{def: symmetric_quadrics_S3}
	{A spherical quadric with two foci on $\mathcal{S}_{SH}$ or on $\mathcal{S}^3$ is called confocal if it is focused at $Z_1$ and $Z_2$.}
\end{defi}

{A confocal quadric on $\mathcal{S}_{SH}$ has its center at the Hooke center $Z_0= (0,0,0,-1)$ and is, after normalization, given by 
\begin{displaymath}
	\left\{
	\begin{array}{l}
		q_{1}^{2}+q_{2}^{2}+q_{3}^{2}+q_{4}^{2}=1, \quad q_4 <0 \\
		\frac{q_1^2 }{A^2} + \frac{q_2^2}{B^2} + \frac{q_3^2}{B^2} - q_4^2 = 0, \quad {A,B>0}
	\end{array}
	\right.
\end{displaymath}
for a spheroid, and by
\begin{displaymath}
	\left\{
	\begin{array}{l}
		q_{1}^{2}+q_{2}^{2}+q_{3}^{2}+q_{4}^{2}=1, \quad q_4 <0 \\
		\frac{q_1^2 }{A^2} - \frac{q_2^2}{B^2} - \frac{q_3^2}{B^2} - q_4^2 = 0, \quad {A,B>0}
	\end{array}
	\right.
\end{displaymath}
for a circular hyperboloid of two sheets.}

{We {now} consider elastic reflections against a reflection wall $\tilde{\mathcal{B}}$ in $\R^3$ {and against} the projection $\mathcal{B}$ on $\mathcal{S}_{SH}$ of  $\tilde{\mathcal{B}}$ by the central projection. {Note that in general, projection and reflection do not commute, \emph{i.e.,} the projection of the reflection velocity is not equal in general to the reflection of the projected velocity. We define} the projective correspondence between their law of reflections at $\tilde{\mathcal{B}}$ and $\mathcal{B}$ if projection and reflection do commute:}
\begin{defi}
	{Let {the} reflection walls $\tilde{\mathcal{B}}$ in $\R^3$ and $\mathcal{B}$ on $\mathcal{S}_{SH}$ be centrally projected to each other.  {We say that $\tilde{\mathcal{B}}$ and $\mathcal{B}$ are in {projective correspondence}} if and only if the incoming vector and the outgoing vector of the elastic reflection at any point $\tilde{q} \in \mathcal{\tilde{B}}$ are projected to the incoming vector and the outgoing vector, respectively, of the elastic reflection at the projected point $q \in \mathcal{B}$ of $\tilde{q}$.}
\end{defi}

\begin{lemma}
\label{lem: reflection_law}
{Let $\mathcal{\tilde{B}}$ be a confocal quadric reflection wall in $(W, \| \cdot \|_a)$
with foci at $\tilde{F}_1=(a,0,0)$ and  $\tilde{F}_2=(-a,0,0)$. Also, let $\mathcal{B}$ be a spherical quadric on $\mathcal{S}_{SH}$ given as the projection of $\mathcal{\tilde{B}}$. Then $\mathcal{B}$ is a spherical confocal quadric, and it has its foci at the projections of $\tilde{F}_1$ and $\tilde{F}_2$ on $\mathcal{S}_{SH}$. {Moreover, $\mathcal{\tilde{B}}$ and $\mathcal{B}$ are in projective correspondence.}

}
\end{lemma}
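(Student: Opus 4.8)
The plan is to treat the two assertions separately: that $\mathcal{B}$ is a spherical confocal quadric, and that $\tilde{\mathcal{B}}$ and $\mathcal{B}$ are in projective correspondence. The first I would handle by a direct substitution, and the second by reducing the commutation of projection and reflection to a statement about normal vectors, which is where the confocal hypothesis does its work.

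First I would show $\mathcal{B}$ is a spherical quadric. Writing $q = \tilde q/\|\tilde q\|$ with $\tilde q = (x,y,z,-1)$, the central projection gives $x=-q_1/q_4,\ y=q_2/q_4,\ z=-q_3/q_4$, of which only the squares enter the quadric equation. Substituting into $x^2/A^2 + (y^2+z^2)/B^2 = 1$ (respectively $x^2/A^2 - (y^2+z^2)/B^2 = 1$) and clearing $q_4^2$ yields exactly $q_1^2/A^2 + (q_2^2+q_3^2)/B^2 - q_4^2 = 0$ (respectively with the two minus signs), which is the normal form of a spherical quadric of Definition \ref{def: symmetric_quadrics_S3}. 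I would then record the relation between $A,B$ and $a$ forced by the requirement that $\tilde{\mathcal{B}}$ have foci at $\tilde F_{1,2}=(\pm a,0,0)$ in the metric $\|\cdot\|_a$; a short string-construction computation (scaling the $x$-coordinate by $\sqrt{1+a^2}$ to reduce $\|\cdot\|_a$ to the Euclidean norm) gives $A^2 = a^2 + (1+a^2)B^2$ in the spheroidal case. This single algebraic identity is the key input for the reflection law.

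For the projective correspondence I would first reduce it to a linear statement. The differential of the central projection at $\tilde q$ is $P\colon v \mapsto \|\tilde q\|^{-1}\,\Pi_{q^{\perp}}v$, a positive multiple of Euclidean orthogonal projection onto $T_q\mathcal{S}_{SH}=q^{\perp}$, and it is a linear isomorphism $T_{\tilde q}W \to T_q\mathcal{S}_{SH}$. Since the projection carries $\tilde{\mathcal{B}}$ onto $\mathcal{B}$, the map $P$ automatically sends $T_{\tilde q}\tilde{\mathcal{B}}$ onto $T_q\mathcal{B}$. Now an elastic reflection is the linear involution fixing the tangent space of the wall and negating its metric normal line; hence if $P$ \emph{also} maps the $\|\cdot\|_a$-normal line of $\tilde{\mathcal{B}}$ onto the round-normal line of $\mathcal{B}$, then for $\tilde v = \tilde v^{T} + t\tilde n$ one gets $P R_{\tilde{\mathcal{B}}}\tilde v = P\tilde v^{T} - tP\tilde n = R_{\mathcal{B}}P\tilde v$, so $P$ intertwines the two reflections (the positive time-change factor of \eqref{eq: time_parameter_change} is a scalar and drops out of this direction argument). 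Everything therefore comes down to verifying $P\tilde n \perp_{\mathrm{round}} T_q\mathcal{B}$.

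The hard part is exactly this normal computation, and it is where confocality enters. With $M=\mathrm{diag}(1/(1+a^2),1,1)$ the Gram matrix of $\|\cdot\|_a$ and $g,h$ the defining functions, the $\|\cdot\|_a$-normal is $\tilde n = M^{-1}\nabla g$, while the round-normal is simply $n=\nabla h$ (one checks $\langle \nabla h, q\rangle = 2h(q)$, which vanishes on $\mathcal{B}$, so no further projection is needed). I would expand $\Pi_{q^{\perp}}\tilde n = \tilde n - \langle \tilde n, q\rangle q$ in coordinates and impose $\Pi_{q^{\perp}}\tilde n = \mu\,n$; comparing components and using $y^2+z^2 = B^2(1-x^2/A^2)$ on the quadric, the identity collapses to precisely $A^2 = a^2 + (1+a^2)B^2$. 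Thus $P\tilde n \parallel n$ holds \emph{exactly because} $\tilde{\mathcal{B}}$ is confocal, which is the crux of the lemma, the hyperboloidal case being identical up to signs. Finally, to identify the spherical foci I would invoke the classical optical property of confocal quadrics in $(W,\|\cdot\|_a)$, that $\|\cdot\|_a$-reflection sends lines through $\tilde F_1$ to lines through $\tilde F_2$: since central projection maps lines to great circles and $\tilde F_{1,2}$ to $Z_{1,2}$, the commutation just established shows that the round reflection at $\mathcal{B}$ sends great circles through $Z_1$ to great circles through $Z_2$, which is the spherical optical property identifying $Z_1,Z_2$ as the foci of $\mathcal{B}$.
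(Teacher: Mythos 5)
Your proposal is correct, and its core coincides with the paper's own proof: both arguments reduce the projective correspondence to the single claim that the differential of the central projection carries the metric normal line of one wall to the normal line of the other (via the decomposition $v=v_t+v_n$, $v'=v_t-v_n$), and both verify that claim by the same explicit computation, which closes precisely because of the focal relation $A^2=a^2+(1+a^2)B^2$ (the paper records it in the equivalent form $1+a^2=(A^2+1)/(B^2+1)$); the hyperboloid case is dispatched in both by the substitution $B\mapsto iB$. The paper runs the normal computation in the opposite direction --- it pushes the spherical normal $N=\nabla h$ down to $W$ and matches it against the $\|\cdot\|_a$-gradient of the defining function --- but this is the same calculation read backwards, and your observation that $\langle \nabla h, q\rangle=2h(q)=0$ on $\mathcal{B}$, so that no further tangential projection is needed, is exactly what the paper uses implicitly. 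The one genuine divergence is the identification of the foci of $\mathcal{B}$: the paper does this metrically, restricting to the great $2$-sphere $\{q_3=0\}$, recognizing a spherical ellipse with foci at the projections of $\tilde{F}_1,\tilde{F}_2$, and then invoking the rotational symmetry in the $(q_2,q_3)$-variables; you instead transport the optical property of $\tilde{\mathcal{B}}$ through the commutation you have just established. Your route is elegant, but it leaves one step unproved: the paper defines foci of spherical quadrics by the string condition (constant sum or difference of \emph{spherical distances}), so concluding from the optical property that $Z_1,Z_2$ are the foci requires the classical equivalence of the optical and string characterizations for spherical conics --- true, but it should be cited or verified, and the paper's direct metric argument avoids needing it.
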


\begin{proof}
	We first consider the case that $\tilde{\mathcal{B}}$ {is a spatial spheroid} and $\mathcal{B}$ {is a spherical spheroid}.
	Set $\tilde{\mathcal{B}}$ as the spheroid given by 
	\begin{equation}
	\label{eq: ellipsoid}
	\frac{x^2}{A^2} + \frac{y^2}{B^2} + \frac{z^2}{B^2} = 1, \quad A> B.
	\end{equation}
	such that 
	\[
	1+a^2 = \frac{A^2+1}{B^2+1}.
	\]
	This {spheroid} has its foci at $\tilde{F}_1$ and $\tilde{F}_2$.  Indeed, if we set its foci {at} 
	$$\tilde{F}_1 = (c, 0,0,), \tilde{F}_2 = (-c,0,0),$$
	 then the value $c$ can be computed {via}
	\[
	\frac{c^2}{1+a^2} =\frac{A^2}{1+a^2} - B^2 \Leftrightarrow c = \pm a.
	\]
	{
		We now {describe} the projection {$\mathcal{B}$}  of  {$\tilde{\mathcal{B}}$} on {$\mathcal{S}_{SH}$}. {By the central projection, the quadric $\mathcal{B}$ on {$\mathcal{S}_{SH}$}  is given {by} the equation}
		\begin{equation}
		\label{eq: quadric_sphere}
		\frac{q_1^2}{A^2} + \frac{q_2^2}{B^2} + \frac{q_3^2}{B^2} - q_4^2 = 0, \quad {q_4<0}
		\end{equation}
		where $(q_1,q_2,q_3,q_4) \in {\mathcal{S}^3} {\subset \mathbb{R}^{4}}$. 
		
			{
			By setting $q_3= 0$, the above equation can be written into:
			\[
			\frac{q_1^2}{A^2} + \frac{q_2^2}{B^2} - q_4^2 = 0, \quad q_4<0.
			\]
			This is the equation of spherical ellipsoid on the hemisphere $$\mathcal{S}_{SH}^2= \{ q \in \mathcal{S}^3 \mid q_3 = 0, q_4<0 \}$$ with foci at $$F_1=(a/\sqrt{1+a^2}, 0,0,-1/\sqrt{1+a^2})$$ and $$F_2=(-a/\sqrt{1+a^2}, 0,0,-1/\sqrt{1+a^2}).$$ From the symmetry of \eqref{eq: quadric_sphere}, the {spherical} distances from the two points $F_1$ and $F_2$ do not depend on the value of $q_3$, therefore the quadric $\mathcal{B}$ is a spherical {spheroid} on $\mathcal{S}_{SH}$ with foci at $F_1$ and $F_2$.
		}
		{We now set} $q_1^2 = \kappa$ and {treat $\kappa$ as a parameter}. {From equation \eqref{eq: quadric_sphere} and the equation $q_1^2 + q_2^2 + q_3^2 + q_4^2 = 1$, we thus obtained the circle in the $(q_{2}, q_{3})$-plane:}
		\begin{equation}
		\label{eq: circular_q2q3}
		q_2^2 + q_3^2 = \frac{1 - (A^{-2} + 1)\kappa}{B^{-2} + 1}
		\end{equation}
		Consequently we have 
		\[
		{q_{4}} = - \sqrt{\frac{B^{-2} + (A^{-2} - B^{-2})\kappa}{B^{-2}+ 1}}.
		\]
		{Equivalently,}
		\[
		(1-A^{-2}B^2)q_1^2 + (1+ B^2)q_4^2 = 1, \quad q_4 <0.
		\]
		{Thus the} projection of $\mathcal{B}$ to {the $(q_1, q_4)$}-plane is the {``lower half ellipse.''}  
		
		We now consider the $S^1$-action {by rotations in the $(q_{2}, q_{3})$-plane on the sphere $\mathcal{S}^3$: } 
		\begin{equation}
		\label{eq: S1_action}
		\mathcal{S}^{3}  \ni (q_1, q_2,q_3, q_4) \mapsto (q_1, \cos \phi \cdot q_2 - \sin \phi \cdot q_3, \sin \phi \cdot q_2 + \cos \phi \cdot q_3, q_4) \in \mathcal{S}^{3}
		\end{equation}
		for $\phi \in {\mathbb{R} / 2 \pi \mathbb{Z}}$. It is clear that the hemisphere $S_{SH}$ is invariant under this action. 
		From equation \eqref{eq: quadric_sphere}, the spherical {spheroid} $\mathcal{B}$ is invariant under this $S^1$-action.
		Using $q_1^2 + q_2^2 + q_3^2 + q_4^2 = 1$, we eliminate $q_4$ from \eqref{eq: quadric_sphere}
		\[
		({A^{-2}} + 1)q_1^2 + ({B^{-2}} + 1)q_2^2 + ({B^{-2}} + 1)q_3^2 = 1. 
		\]
	}
	
	{The axis of symmetry of $\tilde{\mathcal{B}}$ is the $q_1$-axis in $W$, {which} is projected to the half equator given by 
		\[
		q_4 = {- |q_1|}, \quad q_2=q_3 = 0
		\]
		on $\mathcal{S}_{SH}$.}
	
	To check that {they} are projectively corresponded, it suffices to show that a normal vector to  $\mathcal{B}$ at $q \in \mathcal{B}$ is projected to again a normal vector to $\tilde{\mathcal{B}}$ at the projection $\tilde{q} \in \tilde{\mathcal{B}}$. {We explain this. Let $v$ and $w$ be the velocity vectors on $\mathcal{S}_{SH}$ and in $W$, respectively, and assume they are projectively correspond {to each other}. The velocity vectors before and after a reflection at the wall $\mathcal{B}$ are given by 
	\[
	v = v_t + v_n, v'= v_t - v_n,
	\]
	where $v_t$ is the tangent component of $v$ to $\mathcal{B}$ and $v_n$ is the normal component. 
	{Similarly,} we write the velocities before and after the reflection at $\mathcal{\tilde{B}}$ as
	\[
	w = w_t + w_n, w'= w_t - w_n.
	\]
	If $v_t$ projects to $w_t$ and $v_n$ projects to $w_n$, then $v'$ projects to $w'$. Since $\mathcal{B}$ and $\mathcal{\tilde{B}}$ are projected to each other, the tangent velocities $v_t$ and $w_t$ are also projected to each other. Hence, we are left to show that the normal velocities $v_n$ and $w_n$ are projected to each other. } 
	
	 Indeed, the normal vector to $\mathcal{B}$ at $q=(q_1,q_2,q_3,q_4) \in \mathcal{B} \subset \mathcal{S}^3$ given by 
	\[
	N:=\left(\frac{2 q_1}{A^2}, \frac{2 q_2}{B^2},\frac{2 q_3}{B^2},-2q_4\right)^T
	\]
	is projected to the vector $n$ in $W$ given by 
	\[
	n:=
	\begin{pmatrix}
	-\dfrac{1}{q_4} & 0 & 0 & \dfrac{q_1}{q_4^2}\\
	0 & -\dfrac{1}{q_4} & 0 & \dfrac{q_2}{q_4^2} \\
	0 & 0 & -\dfrac{1}{q_4} & \dfrac{q_2}{q_4^2}
	\end{pmatrix}
	\cdot 
	N
	= 
	\begin{pmatrix}
	-\dfrac{2 q_1(A^2+1)}{q_4 A^2}\\
	-\dfrac{2 q_2(B^2+1)}{q_4 B^2}\\
	-\dfrac{2 q_3(B^2+1)}{q_4 B^2}\\
	\end{pmatrix}
	\] 
	{To see that the projected vector $n$ is normal to $\mathcal{\tilde{B}}$ at projected point $$\tilde{q}=(x,y,z)= (-q_1/q_4, -q_2/q_4, -q_3/q_4),$$ we now compute the gradient of 
	\[
	 f= \frac{x^2}{A^2} + \frac{y^2}{B^2} + \frac{z^2}{B^2} -1
	\]
	at $\tilde{q}$ with respect to the metric $g$ of $W${:}
	\begin{align*}
	\nabla_g f &= \left(-\frac{2q_1}{q_4A^2}\cdot (1+a^2), - \frac{2q_2}{q_4B^2},- \frac{2q_3}{q_4B^2} \right) \\
	& = \left(-\frac{2q_1}{q_4A^2}\cdot \frac{A^2+1}{B^2+1}, - \frac{2q_2}{q_4B^2},- \frac{2q_3}{q_4B^2} \right) \\
	&= n \cdot \frac{1}{B^2 +1}
	\end{align*}
	which is {indeed} normal to the {spheroid} $\mathcal{\tilde{B}}$ at {$\tilde{q}$.} 
	}
	
	Similarly, if we start with a circular hyperboloid of two sheets $\tilde{\mathcal{B}}$ in $W$ given by 
	\begin{equation}
	\label{eq; hyperbolid}
	\frac{x^2}{A^2} - \frac{y^2}{B^2} - \frac{z^2}{B^2} = 1,
	\end{equation}
	with foci at {$\tilde{F}_1 = (a,0,0)$ and $\tilde{F}_2 = (-a,0,0)$}, which leads to the equation
	\[
	1+ a^2= \frac{A^2+ 1}{-B^2 + 1}.
	\]
	The projection $\mathcal{B}$ of the circular hyperboloid of two sheets $\tilde{\mathcal{B}}$ on $\mathcal{S}_{SH}$ is given by 
	\begin{equation}
	\label{eq: quadric_sphere_hyp}
	\frac{q_1^2}{A^2} - \frac{q_2^2}{B^2} - \frac{q_3^2}{B^2} - q_4^2 = 0, \quad {q_4<0}
	\end{equation}
	where $(q_1,q_2,q_3,q_4) \in {\mathcal{S}^3} {\subset \mathbb{R}^{4}}$. The {rest of the} argument directly follows from the elliptic case by changing $B$ to be a purely imaginary number.
\end{proof}


\begin{defi}
	{An} $N$-dimensional mechanical billiard system $(M,g,U ,\mathcal{B})$ is integrable if there exists $N$ independent first integrals $\{F_n\}_{n=1}^N$ for $(M,g,U)${, which are} {mutually} in involution, {and are invariant under reflections at $\mathcal{B}$, \emph{i.e.}
	\[
	F_n (q,v) = F_n(q,v'), \quad  {1 \leq n \leq N}
	\]
	where $v$ and $v'$ are {respectively the} incoming and outgoing velocities of the reflection at $q \in \mathcal{B}$.}
\end{defi}

{
}
\begin{theorem}
	\label{thm: spatial_Lagrange}
	The mechanical billiard systems defined in the 3-dimensional Euclidean space with the Lagrange problem 
	\[
	{	(\R^{3}, \|\cdot\|, m_{1} /\|q-Z_{1}\| + m_{2} /\|q-Z_{2}\|+f \|q-(Z_{1}+Z_{2})/2\|^{2}),}
	\]
	and with any finite combination of confocal quadrics with foci at the two Kepler centers as reflection wall, are integrable.
	
	{As subcases, we obtain the following results:
		The mechanical billiard problems defined in the three-dimensional Euclidean space 
	\begin{itemize}
		\item with the spatial  two-center problem and with any finite combination of confocal quadrics with foci at the two centers as reflection walls, 
		\item with the spatial Kepler problem and with any finite combination of confocal quadrics with one of the foci at the Kepler center as reflection walls,
		\item  with the spatial Hooke problem and with any finite combination of confocal quadrics centered at the Hooke  center as reflection walls, 
	\end{itemize}	
	are integrable. {The first integrals are $E_{sp}$ \eqref{eq: E_sp}, $\tilde{E}_{sph}$ \eqref{eq: E_sph_proj}, and $L_{yz}$ \eqref{eq: L_yz}.} 
	}
\end{theorem}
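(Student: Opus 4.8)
The plan is to combine the integrability of the underlying Lagrange problem, already established in Corollary \ref{cor: integrability_dim_3}, with the invariance of its three first integrals under reflections at the confocal quadrics. After a linear rescaling of the $x$-axis, the standard Euclidean Lagrange problem is identified with the Lagrange problem in $(W, \| \cdot \|_a)$, so it suffices to work with the latter, where Corollary \ref{cor: integrability_dim_3} supplies the three independent, mutually-in-involution first integrals $E_{sp}$ \eqref{eq: E_sp}, $\tilde{E}_{sph}$ \eqref{eq: E_sph_proj}, and $L_{yz}$ \eqref{eq: L_yz}. Since integrability of a billiard requires, beyond integrability of the underlying flow, only that each first integral take equal values on the incoming and outgoing velocities at every reflection, the whole argument reduces to checking this invariance for each of the three quantities at a single confocal quadric; the case of a finite combination of walls then follows at once, as every reflection of a billiard trajectory occurs at one such wall.

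The energy $E_{sp}$ is immediate: an elastic reflection preserves the reflection point and the $\| \cdot \|_a$-norm of the velocity, while the potential depends on position alone, so $E_{sp}$ is unchanged. For the angular momentum $L_{yz}$, the key observation is that every confocal quadric is a surface of revolution about the $x$-axis, the line through the Kepler centers, and that $\| \cdot \|_a$ is itself invariant under rotations in the $(y,z)$-plane. Consequently the $\| \cdot \|_a$-normal to the wall at any point lies in the meridian plane spanned by the $x$-axis and the point, so the reflection alters the velocity only by a vector of the form $(\alpha,\, \beta\, y/\rho,\, \beta\, z/\rho)$ with $\rho = \sqrt{y^2+z^2}$. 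Substituting this increment into $L_{yz} = y\dot{z} - z\dot{y}$ gives a vanishing change, so $L_{yz}$ is preserved.

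The essential and most delicate step is the invariance of $\tilde{E}_{sph}$, and this is exactly where the projective correspondence is needed. By Lemma \ref{lem: reflection_law}, the elastic reflection at the spatial confocal quadric $\tilde{\mathcal{B}}$ is in projective correspondence with the elastic reflection at its central projection $\mathcal{B}$, which is again a spherical confocal quadric on $\mathcal{S}_{SH}$. On the sphere, the spherical energy $E_{sph}$ is preserved under the spherical elastic reflection, again because speed and position are preserved. Since $\tilde{E}_{sph}$ is precisely $E_{sph}$ read in the chart $W$ via the central projection, and since the time reparametrization \eqref{eq: time_parameter_change} is a fixed positive scaling at the fixed reflection point, hence identical before and after the reflection, the preservation of $E_{sph}$ under the spherical reflection transfers to the preservation of $\tilde{E}_{sph}$ under the spatial reflection.

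I expect this last transfer, rather than the two routine invariances, to be the main point requiring care: one must ensure that projection and reflection genuinely commute, which is the content of Lemma \ref{lem: reflection_law}, and that the velocity rescaling induced by the time change does not spoil the equality of values, which is guaranteed because the rescaling factor $\| \tilde{q} \|^2$ is evaluated at the single reflection point. With all three first integrals shown to be invariant under reflections at every confocal quadric, the billiard system is integrable, and the listed subcases follow by specializing the parameters $m_1, m_2, f$.
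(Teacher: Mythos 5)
Your proposal is correct and follows essentially the same route as the paper: the invariance of $\tilde{E}_{sph}$ is obtained from the projective correspondence of Lemma \ref{lem: reflection_law}, and the invariance of $L_{yz}$ from the rotational symmetry of the confocal quadric about the line of centers (the paper carries this out as an explicit computation after reducing to $z=0$, while you phrase it via the normal lying in the meridian plane, which is the same argument). Your additional remarks on the preservation of $E_{sp}$ and on the harmlessness of the time reparametrization at the reflection point are consistent with, and implicit in, the paper's treatment.
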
 

{The consequence with only the Hooke problem is more restrictive as compared to the known result \cite{Kozlov-Treshchev}, in which the condition that the centered quadrics be symmetric with respect to a line containing the Hooke center can be relaxed. } 

\begin{proof}

	From  {Lemma \ref{lem: reflection_law}}, the spherical energy $\tilde{E}_{sph}$ is conserved under reflections at $\tilde{\mathcal{B}}$. This is also true for the angular momentum $L_{yz}$.
	{We will verify this directly. {Suppose that $\tilde{\mathcal{B}}$ is the spheroid given by \eqref{eq: ellipsoid}}. Due to the symmetry of $\tilde{\mathcal{B}}$, we can assume $z = 0$ without loss of generality. The outgoing vector after the reflection at  $\tilde{q} = (x,y,0) \in \mathcal{\tilde{B}}$ can be computed by using the normal vector {$$n:= \left(\frac{x(1+a^2)}{A^2}, \frac{y}{B^2},0\right)$$} to $\tilde{\mathcal{B}}$ at $\tilde{q}$ as
	{
	\begin{align*}
	v:=(v_x,v_y,v_z)&= \dot{\tilde{q}} - \frac{\langle \dot{\tilde{q}}, n \rangle_a}{\|n \|_a}n \\
	&= \left( \dot{x}  - \frac{(\frac{ \dot{x} x}{A^2}+\frac{ \dot{y} y}{B^2}  )(1+a^2)x}{A^2\sqrt{\frac{x^2}{A^4}+\frac{y^2}{B^4}}},  \dot{y}  - \frac{(\frac{ \dot{x} x}{A^2}+\frac{ \dot{y} y}{B^2} )y}{B^2\sqrt{\frac{x^2}{A^4}+\frac{y^2}{B^4}}},
	\dot{z}  
	\right)
	\end{align*} 
	}
	Then the angular momentum $L_{yz}$ after the reflection is 
	\begin{equation*}
	v_z y 
	= \dot{z} y,
	\end{equation*}
	{which is equal to $L_{yz}$ before the reflection.}
    }{Thus, the angular momentum $L_{yz}$ is conserved under reflections at $\mathcal{B}$.}
	
	{{We} have two first integrals {which are independent of the energy} and mutually in involution, {namely, the spherical energy $E_{sph}$ and the angular momentum $L_{yz}$}, for the billiard system defined with the Lagrange problem in $W$ with the reflection wall $\mathcal{B}$. {Therefore}, the billiard system is integrable.}
	
	{For {circular hyperboloids of two sheets}, the conservation of the spherical energy and the angular momentum $L_{yz}$ under reflections at $\mathcal{B}$
	directly follows from the computations for the case of {spheroids} by changing $B$ to be a purely imaginary number. }
	

\end{proof}

{Similarly, we obtain the following integrable spherical Lagrange billiard systems.}
\begin{theorem}
	{The spherical mechanical billiard systems defined on $\mathcal{S}^3$ with the Lagrange problem and with any {finite} combination  of {spherical} confocal quadrics with foci at two non-antipodal Kepler centers as reflection walls, are integrable.}
	
	{As subcases, we obtain the following results:
		The mechanical billiard problems defined on $\mathcal{S}^3$
		\begin{itemize}
			\item with the spherical two-center problem and with any {finite} combination of spherical confocal quadrics with foci at two non-antipodal centers as reflection walls, 
			\item with the spherical Kepler problem and with any {finite} combination of spherical confocal quadrics with one of the foci at a Kepler center as reflection walls,
			\item  with the spherical Hooke problem and with any {finite} combination of spherical confocal quadrics centered at a Hooke center as reflection walls, 
		\end{itemize}	
		are integrable. {The first integrals are $E_{sph}$ \eqref{eq: E_sph}, $\hat{E}_{sp}$ \eqref{eq: E_sp}, and the angular momentum $\hat{L}_{yz}$ \eqref{eq: L_yz_proj}.}
	}
\end{theorem}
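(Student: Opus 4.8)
The plan is to reduce the spherical statement to the already-established spatial case via the projective correspondence, rather than computing anew on $\mathcal{S}^3$. By Corollary \ref{cor: integrability_dim_3}, the spherical Lagrange problem on $\mathcal{S}^3$ already admits three independent, mutually involutive first integrals: its own energy $E_{sph}$, the projected spatial energy $\hat{E}_{sp}$, and the angular momentum $\hat{L}_{yz}$. Thus the entire content to be proved is that these three quantities remain invariant under elastic reflections at a spherical confocal quadric $\mathcal{B}$. The energy $E_{sph}$ is invariant at any reflection wall by construction (reflections are elastic and norm-preserving), so the real work concerns $\hat{E}_{sp}$ and $\hat{L}_{yz}$.

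First I would invoke Lemma \ref{lem: reflection_law}: the spherical confocal quadric $\mathcal{B}$ on $\mathcal{S}_{SH}$ is in projective correspondence with a spatial confocal quadric $\tilde{\mathcal{B}}$ in $(W, \|\cdot\|_a)$, meaning that the elastic reflection at $\mathcal{B}$ and the elastic reflection at $\tilde{\mathcal{B}}$ intertwine with the central projection. Since $\hat{E}_{sp}$ is precisely the pullback to $\mathcal{S}^3$ of the spatial energy $E_{sp}$, and Theorem \ref{thm: spatial_Lagrange} already establishes that $E_{sp}$ (together with $\tilde{E}_{sph}$ and $L_{yz}$) is conserved under reflections at $\tilde{\mathcal{B}}$, the projective correspondence transfers this conservation verbatim: if the incoming and outgoing velocities at $q \in \mathcal{B}$ project to the incoming and outgoing velocities at $\tilde{q} \in \tilde{\mathcal{B}}$, and $E_{sp}$ takes equal values on the latter pair, then $\hat{E}_{sp}$ takes equal values on the former pair. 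This is the key structural step and it is essentially free once Lemma \ref{lem: reflection_law} is in hand.

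For the angular momentum $\hat{L}_{yz} = q_3 q_2' - q_2 q_3'$, I would argue by the rotational symmetry already exploited in the earlier proofs. The confocal quadric $\mathcal{B}$, being a spheroid or circular hyperboloid of two sheets with axis along the projected $q_1$-axis, is invariant under the $S^1$-action \eqref{eq: S1_action} of rotations in the $(q_2,q_3)$-plane, whose infinitesimal generator is exactly the Hamiltonian vector field of $\hat{L}_{yz}$. Because the reflection wall is invariant under this rotation group and the reflection law is equivariant with respect to isometries fixing the wall, the normal direction at $q$ rotates consistently, and hence the component of velocity responsible for $\hat{L}_{yz}$ is unaffected by the reflection; concretely, by the symmetry one may place $q$ in the slice $q_3 = 0$ and check directly, as was done for $L_{yz}$ in the proof of Theorem \ref{thm: spatial_Lagrange}, that the reflected velocity leaves $q_3 q_2' - q_2 q_3'$ unchanged. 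The circular hyperboloid case follows from the spheroid case by the now-standard substitution of $B$ by a purely imaginary number, as in Lemma \ref{lem: reflection_law}.

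The main obstacle I anticipate is not any single computation but ensuring that the projective correspondence of Lemma \ref{lem: reflection_law}, which is phrased for $\mathcal{S}_{SH}$ and $W$, is used only where $\hat{E}_{sp}$ is genuinely the projection of $E_{sp}$ and where both velocities stay in the regime where the central projection is a diffeomorphism. Since Corollary \ref{cor: integrability_dim_3} records that $\hat{E}_{sp}$ and $\hat{L}_{yz}$ extend analytically to all of $\mathcal{S}^3$ away from the centers, their antipodes, and the equator $\{q_4 = 0\}$, the conservation established on $\mathcal{S}_{SH}$ propagates to the full sphere by analytic continuation, which covers reflection walls placed on either hemisphere. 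The subcases (two-center, Kepler, Hooke) then follow immediately by setting $f = 0$, or $m_2 = f = 0$, or $m_1 = m_2 = 0$ respectively, since the three first integrals $E_{sph}$, $\hat{E}_{sp}$, $\hat{L}_{yz}$ and their involutivity persist under these specializations.
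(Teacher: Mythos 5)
Your proposal is correct and follows essentially the same route as the paper: the paper's own (very brief) proof likewise observes that $\hat{E}_{sp}$ and $\hat{L}_{yz}$ are the projections of $E_{sp}$ and $L_{yz}$ and that, by Lemma \ref{lem: reflection_law}, $\mathcal{B}$ and $\tilde{\mathcal{B}}$ are in projective correspondence, so conservation under reflection transfers from the spatial case of Theorem \ref{thm: spatial_Lagrange}, with independence and involutivity supplied by Corollary \ref{cor: integrability_dim_3}. Your additional remarks on the $S^1$-symmetry argument for $\hat{L}_{yz}$ and on the analytic extension beyond $\mathcal{S}_{SH}$ are consistent with what the paper records elsewhere and do not change the substance of the argument.
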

  \begin{proof}
  	
 	
 	{The conservation of $\hat{E}_{sp}$ and $\hat{L}_{yz}$ under reflections {are directly deduced} from the same argument for the conservation of $E_{sp}$ and $L_{yz}$ in $W$, since $\hat{E}_{sp}$ and $\hat{L}_{yz}$ are the projections of $E_{sp}$ and $L_{yz}$, respectively, and in addition, $\mathcal{B}$ and $\tilde{\mathcal{B}}$ {are in projective correspondence}.
 	}

  \end{proof}

\section{{Integrable Lagrange Billiards in the 3-D Hyperbolic Space }}
\label{sec: hyperboloid}

We now discuss the integrability of {the analogously-defined Lagrange billiards} in the {three-dimensional} hyperbolic space. {For this purpose, we shall realize the three-dimensional hyperbolic space via a sheet of a two-sheeted hyperboloid in the Minkowski space $\R^{3,1}$. The central projection again provides a correspondence between the Lagrange problem in $\R^{3}$ restricted to the unit ball and the Lagrange problem in the hyperboloid model of the three-dimensional hyperbolic space. Also, the projective correspondence of reflection walls of billiard systems can be defined and analyzed in the same fashion. The arguments in this section are completely analogous to our previous discussions on the system in $\R^{3}$ and on $\mathcal{S}^{3}$.}

Consider the four-dimensional Minkowski space $\R^{3,1}$ equipped with the pseudo-Riemannian metric
\begin{equation}
	\label{eq: Minkowski_metric}
	dq_1^2 + dq_2^2 +dq_3^2 - dq_4^2{,}
\end{equation}
and the hyperboloid $\mathcal{H}$ in $\R^{3,1}$ given by the equation
\[
q_1^2 + q_2^2 + q_3^2 - q_4^2 = -1.
\]
{With the restriction of the metric \eqref{eq: Minkowski_metric}, the hyperboloid $\mathcal{H}$ is a Riemannian manifold (indeed, also with constant negative curvature)}. We denote by $\|\cdot \|_H$ the Minkowski norm in $\R^{3,1}$ {corresponds to} the metric \eqref{eq: Minkowski_metric}.
We take the lower {sheet} of the hyperboloid 
\[
\mathcal{H}_{S}= \{(q_1,q_2,q_3,q_4)\in \mathcal{H} \mid q_4 <0   \}
\]
and define mechanical systems on it and call them hyperbolic systems. {We denote the upper sheet of $\mathcal{H}$ by $\mathcal{H}_N$.}

We consider the subspace $W_H:= \{(q_1,q_2,q_3,-1)\}$ in $\R^{3,1}$. The central projection from $O= (0,0,0,0) \in \R^{3,1}$ projects $\mathcal{H}_S$ to the {Beltrami-Klein} ball $B:= \{(q_1,q_2,q_3) \in W_H \mid q_1^2 + q_2^2 + q_3^2 < 1  \} \subset W_H$, {which becomes a model of the hyperbolic space should we equip it with the induced metric from the metric on $\mathcal{H}_S$ by the central projection.} We equip $W_H$ with the norm which is induced from the metric \eqref{eq: Minkowski_metric} in $\R^{3,1}$, {which by restriction equips $B$ with another metric which is flat. Note that these two metrics on $B$ are projectively-equivalent, meaning that they have the same unparametrized geodesics, which are simply the intersections of lines in $W_{H}$ with $B$. } Later {on}, we {shall} allow further affine change of the metric {on} $W_H$. {In $B$, by restriction, this results in a different flat metric whose unparametrized geodesics remain the same. This observation was used crucially in the celebrated works of Tabachnikov concerning Birkhoff billiards \cite{Tabachnikov 1999}. 

 Analogously as in the spherical case, we project a force field $F_{W_H}$ on $W_{H}$ to $\mathcal{H}_{S}$ {by making a proper time reparametrization on the push-forward of $F_{W_H}$ 
 by the central projection.  In this way, we obtain} a force field $F_{H}$ on $\mathcal{H}_S$.
  {Precisely,} when we have the force field $F_{W_H}$ on $B \subset W_{H}$,  then the equations of motion of the system in $B$ is 
\[
\ddot{\tilde{q}} = F_{W_H}(\tilde{q})
\]
where $\tilde{q} \in B$ and $\ddot{\tilde{q}}$ is the second time derivative of $\tilde{q}$. When $\tilde{q} \in B$ is projected to $q \in \mathcal{H}_S$:
\begin{equation}
\label{eq: time_parameter_hyp}
q = \|\tilde{q}\|_H^{-1} \tilde{q},
\end{equation}
we have 
\[
\dot{q} = \|\tilde{q}\|_H^{-2}(\dot{\tilde{q}}\| \tilde{q}\|_H - \langle \nabla \| \tilde{q}\|_H, \dot{\tilde{q}} \rangle \tilde{q}).
\]
Here, we take the new time variable $\tau$ satisfying that 
\[
\frac{d}{d \tau } = \| \tilde{q} \|_H^2 \frac{d}{dt}
\]
for the hyperbolic system, and {we denote} the time derivative {by} $':= \frac{d}{d \tau}$. Consequently we have 
\[
{q'} = \dot{\tilde{q}}\| \tilde{q}\|_H - \langle \nabla \| \tilde{q}\|_H, \dot{\tilde{q}} \rangle \tilde{q},
\]
and 
\[
q'' = \| \tilde{q}\|_H^2 \left(\ddot{\tilde{q}}\| \tilde{q}\|_H  -  \left(\langle \nabla \| \tilde{q}\|_H, \ddot{\tilde{q}} \rangle + \left\langle \dfrac{d \nabla \| \tilde{q}\|_H}{dt}, \dot{\tilde{q}}\right\rangle \right) \tilde{q}\right),
\]
where the gradient and the inner product are defined with respect to the Minkowski metric. 
Thus we have 
\[
q'' =  \| \tilde{q}\|_H^2 (F_{W_H}(\tilde{q} - \lambda(\tilde{q}, \dot{\tilde{q}}, \ddot{\tilde{q}})\tilde{q} ),
\]
where $\lambda(\tilde{q}, \dot{\tilde{q}}, \ddot{\tilde{q}}) = \langle \nabla \| \tilde{q}\|_H, \ddot{\tilde{q}} \rangle + \left\langle \dfrac{d \nabla \| \tilde{q}\|_H}{dt}, \dot{\tilde{q}}\right\rangle$. 
By projecting both sides of this equation to $\mathcal{H}_{S}$, we obtain the equations of the motion on $\mathcal{H}_S$ given by 
\[
\nabla_{q'} q'= F_{H}(q),
\]
where $\nabla$ is the Levi-Civita connection of the Riemannian manifold $\mathcal{H}_S$ and with the projected force $F_{H}$ on $\mathcal{H}_S$. 

By switching from the {spherical-Euclidean} projective correspondence to the {hyperboloid-Euclidean} projective correspondence, we obtain analogous results for the hyperbolic systems defined on $\mathcal{H}_S$.

We first define the Lagrange problem in $B \subset W_H$.
\begin{defi}
	\label{def: Lagrange_B}
	The Lagrange problem on $(B, \| \cdot \|_{ia})$ is defined as the restriction to $B$ of the Lagrange problem in $(W_H, \|\cdot \|_{ia})$ with all of its Hooke and  Kepler centers lying in $B$.
\end{defi}

{Next we define {the} Lagrange problem on the hyperboloid $\mathcal{H}_S$. {Note that two center problem in the hyperbolic plane has been defined by Killing {\cite{Killing}}. The extension by adding a Hooke-type center is straightforward.}}

\begin{defi}
	\label{def: Lagrange_hyperbolic}
	{The Lagrange problem on $(\mathcal{H}_S, \| \cdot \|_H)$ with its Kepler centers $Z_1$ and $Z_2$ on $\mathcal{H}_S$ and its Hooke center $Z_0 \in \mathcal{H}_S$ in the middle of two Kepler centers is the system
	\[
	(\mathcal{H}_S, \| \cdot \|_H, m_1 \coth \theta_{Z_1} + m_2 \coth \theta_{Z_2} + f \tanh^2 \theta_{Z_0}),
	\]
	where $\theta_{Z_1}= \angle q OZ_1, \theta_{Z_2}= \angle q OZ_2, \theta_{Z_0}= \angle q OZ_0$ are hyperbolic angles and $m_1, m_2, f \in \R$ are mass parameters. As special cases, we have:
	\begin{itemize}
		\item $f =0$: This system is the two-center problem on the hyperboloid $\mathcal{H}_S$.
		\item $m_1 = f=0$ or $m_2=f=0$: This system is the Kepler problem on the hyperboloid $\mathcal{H}_S$.
		\item $m_1=m_2=0$: This systems is the Hooke problem on the hyperboloid $\mathcal{H}_S$.
	\end{itemize} 
	}
\end{defi}

Here, if a natural mechanical system in $W_H$ has the corresponding projected system defined on $\mathcal{H}_S$ which is derived from a potential, {then} we call the initial system {\emph{h-projective}}.
\begin{defi}
	\label{def: hyp_projective}
	A natural mechanical system in $B$ is called {\emph{h-projective}} if
	the projected system on $\mathcal{H}_{S}$ is also a natural mechanical system, \emph{i.e.,} the projected force field $F_H$ defined on $\mathcal{H}_{S}$ is derived from \emph{a} force function.
\end{defi}

{We now show that the Lagrange problem in $B$ with Hooke center {(which can be ineffective)} at $Z_0=(0,0,0,-1)$ is also h-projective.} Without the loss of generality, we {put} the {Kepler} centers of the Lagrange problem in $W_H$ as
\[
\tilde{Z}_1 = (a, 0,0,-1), \tilde{Z}_2=(-a,0,0,-1),
\]
where $a \in (-1,1)$.
and we define the norm $\|\cdot  \|_{ia}$ of $W_H$ as follows:
\[
\| \tilde{q}\|_{ia} = \sqrt{\frac{q_1^2}{1-a^2}+q_2^2+q_3^2}
\]
for $\tilde{q}= (q_1,q_2,q_3,-1) \in W_{H}$. With this setting, the following lemma can be proven similarly as in the spherical case. 
\begin{lemma}
	\label{lem: hyp_Lagrange_projective}
	The three-dimensional Lagrange problem {in $(B, \|\cdot\|_{ia})$ with the Hooke center at $(0,0,0,-1)$} {is h-projective. }
\end{lemma}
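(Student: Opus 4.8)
The plan is to establish Lemma \ref{lem: hyp_Lagrange_projective} by following exactly the template of the spherical case (Lemma \ref{lem: projective}), but replacing the round geometry of $\mathcal{S}_{SH}$ with the hyperboloid geometry of $\mathcal{H}_S$ and the Euclidean norm with the Minkowski norm $\|\cdot\|_H$. The key structural fact I would exploit is that the general projection formula $q'' = \|\tilde q\|_H^2 (F_{W_H}(\tilde q) - \lambda(\tilde q,\dot{\tilde q},\ddot{\tilde q})\,\tilde q)$ has already been derived, and that the radial second term $\lambda\,\tilde q$ is normal to $\mathcal{H}_S$ (since the central projection direction $\tilde q$ is $\|\cdot\|_H$-orthogonal to $T_q\mathcal{H}_S$). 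Hence, as in the spherical proof, it suffices to compute the tangential projection of $\|\tilde q\|_H^3 F_{W_H}(\tilde q)$ and verify that the resulting force field $F_H$ on $\mathcal{H}_S$ is the gradient (with respect to the hyperbolic metric) of the hyperbolic Lagrange potential from Definition \ref{def: Lagrange_hyperbolic}.

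Concretely, I would write the spatial force field in $(B,\|\cdot\|_{ia})$ as
\[
F_{W_H}(\tilde q) = -m_1\|\tilde q - \tilde Z_1\|_{ia}^{-3}(\tilde q - \tilde Z_1) - m_2\|\tilde q - \tilde Z_2\|_{ia}^{-3}(\tilde q - \tilde Z_2) + 2f(\tilde q - \tilde Z_0),
\]
then introduce, for each Kepler center, the foot $G_i$ of the perpendicular from $O$ to the line through $\tilde Z_i$ and $\tilde q$, and use the conversion relations between the $\|\cdot\|_{ia}$-norm and the Minkowski norm analogous to $\|\tilde q - \tilde Z_i\|_a = \|\tilde q - \tilde Z_i\|\,\|G_i\|/\sqrt{1+a^2}$. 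The decisive computation is to show that the norm of the tangentially projected force reduces to a clean expression in the hyperbolic angles $\theta_{\tilde Z_i} = \angle \tilde Z_i O \tilde q$, namely to $|\hat m_1|\sinh^{-2}\theta_{\tilde Z_1} + |\hat m_2|\sinh^{-2}\theta_{\tilde Z_2} + 2|f|\sinh\theta_{\tilde Z_0}/\cosh^3\theta_{\tilde Z_0}$, with $\hat m_i = m_i\sqrt{1-a^2}$. This is precisely the force derived from the potential $\hat m_1\coth\theta_{Z_1} + \hat m_2\coth\theta_{Z_2} + f\tanh^2\theta_{Z_0}$, identifying the projected system as the hyperbolic Lagrange problem of Definition \ref{def: Lagrange_hyperbolic}.

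The main obstacle, and the place where the hyperbolic case genuinely differs from the spherical one, is the careful bookkeeping of signs coming from the Minkowski metric: because $\|\cdot\|_H^2 = q_1^2 + q_2^2 + q_3^2 - q_4^2$ is indefinite, quantities that were $\sin$ and $\cos$ of a Euclidean angle become $\sinh$ and $\cosh$ of a hyperbolic angle, and the orthogonal-projection factors $\|G_i\|/\|\tilde q\|$ must be recomputed with respect to the Lorentzian inner product. One must check that $\tilde q$ is $\|\cdot\|_H$-spacelike or verify the correct sign of $\|\tilde q\|_H^2$ on $B$ (so that the reparametrization $q = \|\tilde q\|_H^{-1}\tilde q$ and the time change are well-defined with the intended orientation), and that the perpendicular feet $G_i$ lie in the appropriate causal regime. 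Once the hyperbolic trigonometric identities replacing the Euclidean ones are set up correctly, the computation proceeds in exact parallel to Lemma \ref{lem: projective}, and the concluding identification of $F_H$ with $\mathrm{grad}_H U$ for the hyperbolic Lagrange potential completes the proof.
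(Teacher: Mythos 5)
Your proposal is correct and follows exactly the route the paper intends: the paper gives no separate proof of Lemma \ref{lem: hyp_Lagrange_projective}, stating only that it ``can be proven similarly as in the spherical case,'' and your argument is precisely that adaptation of Lemma \ref{lem: projective} (Minkowski inner product in place of the Euclidean one, $\sinh/\cosh$ in place of $\sin/\cos$, $\sqrt{1-a^2}$ in place of $\sqrt{1+a^2}$), carried out in more detail than the paper itself. Your flagged sign issue is real but benign --- for $\tilde q\in B$ one has $\langle\tilde q,\tilde q\rangle_H<0$, so $\tilde q$ is timelike and $\|\tilde q\|_H$ must be read as $\sqrt{-\langle\tilde q,\tilde q\rangle_H}$, after which the computation goes through as you describe.
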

{This lemma implies the integrability of the Lagrange problems defined in $W_H$ and on $\mathcal{H}_S$. Indeed, for the Lagrange problem on $\mathcal{H}_S$, besides its own energy 
\begin{equation}
\label{eq: E_hyp}
E_{hyp}:= \frac{\| q'\|_H^2}{2} - \frac{\hat{m}_1}{\tanh \theta_{Z1}} - \frac{\hat{m}_2}{\tanh \theta_{Z_2}} -f \tanh^2 \theta_{Z_0},
\end{equation}
the {projected} spatial energy in $W_H$ to $\mathcal{H}$ can be written into
\begin{small}
\begin{equation}
\label{eq: E_sp_proj_hyp}
\begin{split}
\hat{E}_{sp} := &\frac{(q_4 q_1'- q_1 q_4')^2}{2  (1-a^2)} + \frac{(q_4 q_2'- q_2 q_4')^2}{2 } + \frac{(q_4 q_3'- q_3 q_4')^2}{2  } - f \left(\frac{q_1^2}{q_4^2(1-a^2)} + \frac{q_2^2}{q_4^2} + \frac{q_3^2}{q_4^2}\right)\\
&- m_1\left(\frac{(-q_1/q_4-a)^2}{1-a^2} + \frac{q_2^2}{q_4^2} + \frac{q_2^3}{q_4^2}  \right)^{-\frac{1}{2}}-m_2\left(\frac{(-q_1/q_4+a)^2}{1-a^2} + \frac{q_2^2}{q_4^2} + \frac{q_2^3}{q_4^2}  \right)^{-\frac{1}{2}} 
\end{split}
\end{equation}
\end{small}
is the first integral. {The other first integral is the component of the angular momentum}
\begin{equation}
\label{eq: L_23_proj_hyp}
\hat{L}_{{yz}} := q_3 q_2' - q_2 q_3'.
\end{equation}
}
We now add the reflection walls and consider the corresponding hyperbolic billiard systems on $\mathcal{H}_S$. {Analogously as} in the spherical case, we {need to} require {the} reflection walls to be {confocal} to establish integrability. 

{We set the Kepler centers $Z_1$ and $Z_2$ on $\mathcal{H}_S$ as the projections of $\tilde{Z}_1, \tilde{Z}_2$:
\[
Z_1= \left( \frac{a}{\sqrt{1-a^2}}, 0,0, -\frac{1}{\sqrt{1-a^2}} \right), Z_2= \left( -\frac{a}{1-a^2}, 0,0, -\frac{1}{\sqrt{1-a^2}} \right),
\]
}
{Foci for quadrics on the hyperboloid model $\mathcal{H}_S$ are defined in the same way as the Euclidean and spherical cases but with the hyperbolic distance.}
\begin{defi}
	\label{def: symmetric_quadrics_H3}
	{A quadric with two foci in the hyperboloid model $(\mathcal{H}_S, \| \cdot \|_H)$ is called confocal if it is focused at two Kepler centers $Z_1,Z_2$.}
\end{defi}

{Such a quadric is centered at the Hooke center $Z_0 = (0,0,0,-1)$, and is, after normalization, given by 
\begin{displaymath}
	\left\{
	\begin{array}{l}
		q_{1}^{2}+q_{2}^{2}+q_{3}^{2}{-}q_{4}^{2}=-1, \quad q_4 <0 \\
		\frac{q_1^2 }{A^2} + \frac{q_2^2}{B^2} + \frac{q_3^2}{B^2} - q_4^2 = 0, \quad {A,B>0} 
	\end{array}
	\right.
\end{displaymath}
for a spheroid, and by
\begin{displaymath}
	\left\{
	\begin{array}{l}
		q_{1}^{2}+q_{2}^{2}+q_{3}^{2}{-}q_{4}^{2}=-1, \quad q_4 <0 \\
		\frac{q_1^2 }{A^2} - \frac{q_2^2}{B^2} - \frac{q_3^2}{B^2} - q_4^2 = 0, \quad {A,B>0}
	\end{array}
	\right.
\end{displaymath}
for a circular hyperboloid of two sheets. }

{{With the same argument} as in the spherical case, the angular momentum $\hat{L}_{yz}$ is conserved under reflections at confocal quadrics on $\mathcal{H}_{S}$}.
{Indeed, such} confocal quadric reflection walls in $W_H$ and on $\mathcal{H}_S$ are in projective correspondence, \emph{i.e.,} the central projection and reflections at such reflection walls commute. 

\begin{lemma}
	\label{lem: hyp_reflection_law}
	Let $\mathcal{\tilde{B}}$ be {the restriction} of {a}  confocal quadric reflection wall {in $W_{H}$ to} {$(B, \| \cdot \|_{ia})$}
		with foci at $\tilde{F}_1=(a,0,0)$ and  $\tilde{F}_2=(-a,0,0)$. {Let} $\mathcal{B}$ be {the 
		quadric} on $\mathcal{H}_{S}$ given as the projection of $\mathcal{\tilde{B}}$. 
		Then $\mathcal{B}$ is a 
		confocal quadric {having} its foci at the projections of $\tilde{F}_1$ and $\tilde{F}_2$ on $\mathcal{H}_{S}$. {Moreover, $\mathcal{\tilde{B}}$ and $\mathcal{B}$ are in projective correspondence.} 
\end{lemma}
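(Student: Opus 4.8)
This is the hyperbolic analogue of Lemma \ref{lem: reflection_law}, so my plan is to mirror that proof line by line, replacing the round sphere $\mathcal{S}^{3}$ by the hyperboloid $\mathcal{H}_S$, the Euclidean ambient inner product by the Minkowski metric $\eqref{eq: Minkowski_metric}$, and the norm $\|\cdot\|_a$ by $\|\cdot\|_{ia}$ (equivalently, replacing $1+a^{2}$ by $1-a^{2}$ and, where convenient, treating $B$ as purely imaginary to pass between spheroids and two-sheeted hyperboloids). First I would treat the spheroid case: take $\tilde{\mathcal B}\subset B$ to be the quadric $\frac{x^2}{A^2}+\frac{y^2}{B^2}+\frac{z^2}{B^2}=1$ and impose $1-a^{2}=\frac{A^{2}-1}{B^{2}-1}$ (the sign change from the spherical relation $1+a^{2}=\frac{A^2+1}{B^2+1}$ coming from the Minkowski signature), then verify by the same focal computation $\frac{c^2}{1-a^2}=\frac{A^2}{1-a^2}-B^2$ that its foci in $(B,\|\cdot\|_{ia})$ sit at $\tilde{F}_{1,2}=(\pm a,0,0)$.

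Next I would identify the projection $\mathcal B$. Writing the central projection $\tilde q=(-q_1/q_4,-q_2/q_4,-q_3/q_4)$, substituting into the equation of $\tilde{\mathcal B}$ and clearing $q_4^{2}$ gives the cone equation $\frac{q_1^2}{A^2}+\frac{q_2^2}{B^2}+\frac{q_3^2}{B^2}-q_4^2=0$, which intersected with $\mathcal H_S$ defines $\mathcal B$ as a spherical-type quadric on $\mathcal H_S$. To see that $\mathcal B$ is genuinely \emph{confocal}, I would restrict to the plane $q_3=0$ to obtain a hyperbolic conic on $\mathcal H_S^{2}$ with foci at $F_{1,2}=(\pm a/\sqrt{1-a^2},0,0,-1/\sqrt{1-a^2})$, then invoke the $S^{1}$-symmetry in the $(q_2,q_3)$-plane (which now acts on $\mathcal H_S$ exactly as in $\eqref{eq: S1_action}$ and preserves both $\mathcal H_S$ and the cone) to conclude that the hyperbolic distances to $F_1,F_2$ are independent of the angular coordinate, hence $\mathcal B$ is confocal with foci the projections of $\tilde F_1,\tilde F_2$.

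For the projective correspondence itself, the decomposition of reflected velocities into tangential and normal parts reduces everything, exactly as in Lemma \ref{lem: reflection_law}, to showing that a normal vector to $\mathcal B$ at $q\in\mathcal H_S$ projects to a normal vector to $\tilde{\mathcal B}$ at $\tilde q$. The one genuine subtlety—and the step I expect to be the main obstacle—is that normality on $\mathcal H_S$ is taken with respect to the \emph{Minkowski} metric, not a Euclidean one, so the ambient normal to the cone must first be corrected by the Minkowski structure before pushing forward. Concretely, the Minkowski gradient of $\frac{q_1^2}{A^2}+\frac{q_2^2}{B^2}+\frac{q_3^2}{B^2}-q_4^2$ is $N=\bigl(\tfrac{2q_1}{A^2},\tfrac{2q_2}{B^2},\tfrac{2q_3}{B^2},+2q_4\bigr)^{T}$, where the $q_4$-sign reflects the $-dq_4^2$ term; applying the Jacobian of the central projection (the same $3\times4$ matrix as in Lemma \ref{lem: reflection_law}) yields a vector $n$ in $W_H$, and I would then compare $n$ against $\nabla_g f$ for $f=\frac{x^2}{A^2}+\frac{y^2}{B^2}+\frac{z^2}{B^2}-1$ computed with the metric $\|\cdot\|_{ia}$. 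The factor $1+a^{2}$ appearing in the spherical computation gets replaced by $1-a^{2}=\frac{A^2-1}{B^2-1}$, and verifying that these two vectors are parallel (up to the scalar $1/(B^2-1)$) is the crux.

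Finally, the two-sheeted hyperboloid reflection wall $\frac{x^2}{A^2}-\frac{y^2}{B^2}-\frac{z^2}{B^2}=1$ in $W_H$ requires no separate argument: as already used repeatedly in the spherical case, it is obtained from the spheroid computation by the formal substitution of $B$ by a purely imaginary number, which carries the relation $1-a^2=\frac{A^2-1}{B^2-1}$, the cone equation, the focal computation, and the normal-vector comparison over verbatim. Thus the same parallelism conclusion holds, establishing that $\tilde{\mathcal B}$ and $\mathcal B$ are in projective correspondence and that $\mathcal B$ is a confocal quadric on $\mathcal H_S$ focused at the projections of $\tilde F_1,\tilde F_2$. \emph{The essential point to get right is the Minkowski-signature bookkeeping in the normal-vector step; everything else is a transcription of the spherical proof.}
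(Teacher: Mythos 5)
Your proposal is correct and follows exactly the route the paper takes: the paper's entire proof of Lemma \ref{lem: hyp_reflection_law} is the one-line remark that it ``follows from a similar argument in the proof of Lemma \ref{lem: reflection_law} using the space-hyperboloid correspondence,'' and your transcription (the relation $1-a^2=\frac{A^2-1}{B^2-1}$, the cone equation, the $S^1$-symmetry argument for confocality, and the normal-vector comparison) is precisely that argument carried out. Your Minkowski-signature bookkeeping is right --- the raised-index normal $N=\bigl(\tfrac{2q_1}{A^2},\tfrac{2q_2}{B^2},\tfrac{2q_3}{B^2},+2q_4\bigr)^{T}$ pushes forward to a vector parallel to $\nabla_g f$ with scalar $1/(1-B^2)$ --- so you have in fact supplied more detail than the paper does.
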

{This lemma follows from a similar argument in the proof of Lemma \ref{lem: reflection_law} using the space-hyperboloid correspondence.}

{{Consequently}, in addition to the angular momentum $\hat{L}_{yz}$, the first integral $\hat{E}_{sp}$ is also conserved under reflections at confocal quadrics on $\mathcal{H}_S$.}

By combining Lemma \ref{lem: hyp_Lagrange_projective} and Lemma \ref{lem: hyp_reflection_law}, we obtain the following result. 

\begin{theorem}
	{The 
	 mechanical billiard systems defined on the {hyperboloid model} $\mathcal{H}_S$ 
	 with the {hyperbolic} Lagrange problem 
	 and with any {finite} combination of confocal 
	 quadrics
	 with foci at the two Kepler centers as reflection walls, are integrable.}
	
	
	As subcases, {the mechanical billiard problems defined on $\mathcal{H}_{S}$}
		\begin{itemize}
			\item with the hyperbolic two-center problem and with any {finite} combination of confocal 
			{quadrics} with foci at the two centers as reflection walls; 
			\item with the hyperbolic Kepler problem and with any {finite} combination of confocal 
			{quadrics} with one of the foci at the Kepler center as reflection walls;
			\item  with the 
			hyperbolic 
			Hooke problem and with any {finite} combination of confocal 
			 quadrics centered at the Hooke center as reflection walls, 
		\end{itemize}	
		are integrable. {The first integrals are $E_{hyp}$ \eqref{eq: E_hyp}, $\hat{E}_{sp}$ \eqref{eq: E_sp_proj_hyp}, and $\hat{L}_{yz}$ \eqref{eq: L_23_proj_hyp}.}
\end{theorem}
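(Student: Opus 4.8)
The plan is to combine Lemma \ref{lem: hyp_Lagrange_projective} and Lemma \ref{lem: hyp_reflection_law} in exactly the same manner as in the spherical case, exhibiting the three functions $E_{hyp}$ \eqref{eq: E_hyp}, $\hat{E}_{sp}$ \eqref{eq: E_sp_proj_hyp}, and $\hat{L}_{yz}$ \eqref{eq: L_23_proj_hyp} as independent first integrals in involution that are invariant under reflections at the confocal quadric walls. First I would establish that these three are first integrals of the underlying hyperbolic Lagrange problem on $\mathcal{H}_S$: the energy $E_{hyp}$ is conserved by construction; the projected spatial energy $\hat{E}_{sp}$ is a first integral because, by Lemma \ref{lem: hyp_Lagrange_projective}, the trajectories on $\mathcal{H}_S$ are, up to the time reparametrization \eqref{eq: time_parameter_hyp}, the central projections of trajectories of the Lagrange problem in $(B, \| \cdot \|_{ia})$, along which the spatial energy $E_{sp}$ is conserved; and $\hat{L}_{yz}$ is conserved because the three centers lie on the $q_1$-axis of $W_H$, so the system is invariant under the $S^1$-action of rotations in the $(q_2,q_3)$-plane.

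The independence and involution of $E_{hyp}$, $\hat{E}_{sp}$, $\hat{L}_{yz}$ I would verify exactly as in Corollary \ref{cor: integrability_dim_3}: the central projection between $\mathcal{H}_S$ and $B$ is a diffeomorphism inducing an isomorphism of the tangent bundles, so functional independence reduces to a rank computation of the corresponding Jacobian in the chart $W_H$, which has the same block structure as \eqref{eq: Jacobian} with $1+a^2$ replaced by $1-a^2$. For involution, $\{E_{hyp}, \hat{E}_{sp}\} = 0$ and $\{E_{hyp}, \hat{L}_{yz}\} = 0$ hold since $\hat{E}_{sp}$ and $\hat{L}_{yz}$ are conserved along the flow of $E_{hyp}$, while $\{\hat{E}_{sp}, \hat{L}_{yz}\} = 0$ follows from the invariance of $\hat{E}_{sp}$ under the $(q_2,q_3)$-rotations generated by $\hat{L}_{yz}$, using Proposition \ref{prop: Poisson} to pass between $TM$ and $T^*M$.

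The essential step, and the only place where the confocal hypothesis is used, is the reflection invariance. The energy $E_{hyp}$ is trivially preserved by any elastic reflection. For $\hat{E}_{sp}$ and $\hat{L}_{yz}$ I would argue as in the proof of Theorem \ref{thm: spatial_Lagrange}: in $(B, \| \cdot \|_{ia})$ the spatial energy $E_{sp}$ is preserved by every elastic reflection, and the angular momentum $L_{yz}$ is preserved under reflection at the confocal quadric $\tilde{\mathcal{B}}$ by the same direct computation, since the normal to $\tilde{\mathcal{B}}$ lies in the meridian plane through the $q_1$-axis and hence leaves the $(q_2,q_3)$-angular component untouched. By Lemma \ref{lem: hyp_reflection_law}, $\tilde{\mathcal{B}}$ and its projection $\mathcal{B}$ are in projective correspondence, so reflection on $\mathcal{H}_S$ and reflection in $W_H$ intertwine with the central projection; since $\hat{E}_{sp}$ and $\hat{L}_{yz}$ are precisely the projections of $E_{sp}$ and $L_{yz}$, their invariance transfers to reflections at $\mathcal{B}$. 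I expect the main obstacle to be purely bookkeeping: confirming that the sign changes induced by the Minkowski signature (the replacement of $\sqrt{1+a^2}$ by $\sqrt{1-a^2}$ and of the spherical trigonometric potentials by their hyperbolic counterparts) disrupt neither the block structure of the Jacobian nor the rotational symmetry argument, which they do not. The subcases $f = 0$, $m_i = f = 0$, and $m_1 = m_2 = 0$ then follow by specialization.
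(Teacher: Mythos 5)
Your proposal is correct and follows essentially the same route as the paper, which obtains this theorem precisely by combining Lemma \ref{lem: hyp_Lagrange_projective} (h-projectivity, yielding $\hat{E}_{sp}$ and $\hat{L}_{yz}$ as first integrals alongside $E_{hyp}$) with Lemma \ref{lem: hyp_reflection_law} (projective correspondence of confocal walls, transferring reflection invariance from $W_H$ to $\mathcal{H}_S$), with independence and involution checked as in the spherical case. If anything, you supply more detail than the paper, which at this point simply invokes the analogy with Sections \ref{sec: central_projection} and \ref{sec: main_resutls}.
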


%

\section{{The Higher Dimensional Cases}}
\label{sec: higher_dimensional}
We now discuss the Lagrange problem defined in higher dimensional space forms and the integrability of the corresponding billiard systems. 

In the {$(n+1)$}-dimensional Euclidiean space $\R^{n+1}$, we consider the $n$-dimensional subspace $W^n:=\{ (q_1, q_2, \cdots q_{n}, -1 )  \} \subset \R^{n+1}$ and the $n$-dimensional unit sphere 
\[
\mathcal{S}^{n}: = \{(q_1, q_2, \cdots, q_{n+1})\in \R^{n+1}  \mid q_1^2 + q_2^2 + \cdots + q_{n+1}^2 =1  \}.
\] 
The central projection from the origin $O$ in $\R^{n+1}$ projects the points of south-hemisphere $\mathcal{S}^n_{SH}:=\{(q_1,q_2, \cdots, q_{n+1}) \in \mathcal{S}^n \mid q_{n+1}<0  \}$ to the points in $W^n$. We equip $\mathcal{S}^n$ and $\mathcal{S}^n_{SH}$ with their induced round metric from $\R^{n+1}$. We denote the Euclidean inner product and the Euclidean norm of $\R^{n+1}$ by $\langle \cdot , \cdot  \rangle$ and $\| \cdot \|$, respectively. 

Analogously as in the three-dimensional case, a force field $F_W$ defined on ${W^n}$ is projected to a force field $F_S$ on $\mathcal{S}^n_{SH}$ by the push-forward of the central projection with the factor of time change which is uniquely determined by the projection. 

Consider the motion of the system in $W^n$ given by 
\begin{equation}
\label{eq: newton_eq_Wn}
\ddot{\tilde{q}} = F_W(\tilde{q}),
\end{equation}
where $\tilde{q} \in W^n$. Let $q \in \mathcal{S}^n_{SH}$ and $\tilde{q} \in W^n_{H}$ are projectively corresponded \emph{i.e.}
\[
q = \| \tilde{q}\|^{-1} \tilde{q},
\]
then we obtain
\[
\dot{q} = \|\tilde{q}\|^{-2} (\dot{\tilde{q}}\|\tilde{q} \| - \langle \nabla \| \tilde{q} \|, \dot{\tilde{q}}  \rangle \tilde{q} )
\]
where $\dot{} := \frac{d}{dt}$ is the {time} derivative. {We take} a new time variable $\tau$ for the system on $\mathcal{S}^n_{SH}$ such that 
\begin{equation}
	\label{eq: time_parameter_change_ndim}
	{\dfrac{d}{d \tau}= \| \tilde{q} \|^{2}\dfrac{d}{d t}}
\end{equation}
and write as $' :=\dfrac{d}{d\tau}$. We then have 
\[
q' = \dot{\tilde{q}}\|\tilde{q} \| - \langle \nabla \| \tilde{q} \|, \dot{\tilde{q}}  \rangle \tilde{q}   
\]
and 
\[
q'' = \tilde{q}\|^2 (\ddot{\tilde{q}} \| \tilde{q}\| - (\langle \nabla \| \tilde{q} \|,  \ddot{\tilde{q}}  \rangle  + \langle d (\nabla \| \tilde{q}\|)/dt, \tilde{q} \rangle  )  \tilde{q}).
\]
{Now by plugging} \eqref{eq: newton_eq_Wn} into above equation, we obtain 
\[
q'' = \| \tilde{q} \|^2 (F_W(\tilde{q}) \| \tilde{q}\| - \lambda(\tilde{q}, \dot{\tilde{q}}, \ddot{\tilde{q}}) \tilde{q}  ),
\]
where $\lambda(\tilde{q}, \dot{\tilde{q}}, \ddot{\tilde{q}}) =\langle \nabla \| \tilde{q} \|,  \ddot{\tilde{q}}  \rangle  + \langle d (\nabla \| \tilde{q}\|)/dt, \tilde{q} \rangle $.
By projecting both sides of this equations to the tangent space {$T_q \mathcal{S}^n_{SH}$} we get the equations of motion on $\mathcal{S}^n_{SH}$ given in the form 
\[
\nabla_{q'} q' = F_{S}(q).
\]

The Lagrange problem in $\R^n$, on $\mathcal{S}^n_{SH}$, and on $\mathcal{S}^n$ are defined analogously as Definition \ref{def: Lagrange_R3}, \ref{def: Lagrange_def_hemisph3}, and \ref{def: Lagrange_ded_sph3}, respectively.

By the method of projective dynamics, we can see that the Lagrange problem in $W^n$ with the Hooke center at $Z_0 = (0,0, \cdots,0, -1) \in \R^{n+1}$ is projective {in the analogous sense of Definition \ref{def: projective}:}

\begin{defi}
	A natural mechanical system in $W^n$ is called projective if the projected system on $\mathcal{S}^n_{SH}$ is also a natural mechanical systems. 
\end{defi}

 {Without loss of generality, we assume} that the Kepler centers {{in} the spatial Lagrange problem in $W^n$, and {in} the spherical Lagrange problem on $\mathcal{S}^n$ are placed on the $q_{1}$-axis {respectively} with $q_{1} = a$ and $q_1= -a$,} for some $a \in \R$. {We define} the norm $\| \cdot \|_a$ in $W^n$ as 
\[
\| \tilde{q}\|_a =\left(\frac{q_1^2}{1+a^2}+ \sum_{i=2}^n q_i^2 \right)^{\frac{1}{2}},
\]
{for $\tilde{q}= (q_1, q_2, \cdots, q_n) \in W^n$.}
With this setting, the following lemma can be proven similarly as in the three-dimensional case. 

\begin{lemma}
	\label{lem: Lagrange_proj_ndim}
	The Lagrange problem in {$(W^n, \|\cdot \|_{a})$, with the Hooke center at $\tilde{Z}_0 =(0,0,\cdots, 0,-1)$} is projective.
\end{lemma}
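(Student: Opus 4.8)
The plan is to establish Lemma \ref{lem: Lagrange_proj_ndim} by directly mimicking the three-dimensional computation carried out in the proof of Lemma \ref{lem: projective}, exploiting the fact that the only essential geometric inputs there were the central-projection formula $q = \|\tilde q\|^{-1}\tilde q$, the time reparametrization \eqref{eq: time_parameter_change_ndim}, and the second-derivative correspondence whose radial term drops upon projection to $T_q\mathcal{S}^n_{SH}$. First I would write down the force field $F_{W}$ of the spatial Lagrange problem in $(W^n,\|\cdot\|_a)$ with the two Kepler centers $\tilde Z_1=(a,0,\dots,0,-1)$, $\tilde Z_2=(-a,0,\dots,0,-1)$ and the Hooke center $\tilde Z_0=(0,\dots,0,-1)$, namely
\[
F_W(\tilde q) = -m_1\|\tilde q-\tilde Z_1\|_a^{-3}(\tilde q-\tilde Z_1) - m_2\|\tilde q-\tilde Z_2\|_a^{-3}(\tilde q-\tilde Z_2) + 2f(\tilde q-\tilde Z_0),
\]
and observe that, just as in the three-dimensional case, only the first (non-radial) term of the right-hand side of the second-derivative correspondence survives projection to $T_q\mathcal{S}^n_{SH}$, so it suffices to project $\|\tilde q\|^3 F_W(\tilde q)$.

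The key computation is to show that this projected force is again derived from a potential on $\mathcal{S}^n_{SH}$. Here I would reuse the planar sectional geometry that made the three-dimensional argument work: for each Kepler term, the line $\ell_i$ through $\tilde Z_i$ and $\tilde q$ together with the origin $O$ spans a two-dimensional plane, and the projection factor is governed by the foot of the perpendicular $G_i$ from $O$ to $\ell_i$. This reduction is genuinely dimension-independent, since it only involves the single plane containing $O$, $\tilde Z_i$, $\tilde q$. Using the identities $\|\tilde q-\tilde Z_i\|_a = \|\tilde q-\tilde Z_i\|\,\|G_i\|/\sqrt{1+a^2}$ and the analogous Hooke relation, I expect the norm of the projected force to collapse to
\[
|\hat m_1|\sin^{-2}\theta_{\tilde Z_1} + |\hat m_2|\sin^{-2}\theta_{\tilde Z_2} + 2|f|\,\frac{\sin\theta_{\tilde Z_0}}{\cos^3\theta_{\tilde Z_0}},
\]
with $\hat m_i = m_i\sqrt{1+a^2}$ and the angles $\theta_{\tilde Z_i}=\angle \tilde Z_i O\tilde q$, exactly as before. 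Recognizing each summand as the magnitude of the gradient (with respect to the round metric) of the corresponding spherical potential $\hat m_i\cot\theta_{Z_i}$ or $f\tan^2\theta_{Z_0}$, and checking that the directions also match, then exhibits the projected system as the $n$-dimensional spherical Lagrange problem of Definition \ref{def: Lagrange_def_hemisph3}, proving projectivity.

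The one point requiring genuine care, rather than mechanical transcription, is the directional (as opposed to merely the magnitude) matching of the projected force with the spherical gradient in ambient dimension $n$. In the three-dimensional proof the final identification was stated largely at the level of norms together with the evident axial symmetry; in higher dimensions one must confirm that the tangential projection of each Kepler contribution points along the spherical gradient of $\cot\theta_{Z_i}$, i.e.\ along the in-plane direction determined by $O$, $Z_i$ and $q$, and likewise for the Hooke term along the gradient of $\tan^2\theta_{Z_0}$. The main obstacle, then, is verifying that these three force contributions, each confined to its own two-plane through the origin, assemble into a single exact gradient field on $\mathcal{S}^n_{SH}$; once one notes that $\cot\theta_{Z_i}$ and $\tan^2\theta_{Z_0}$ are functions of the respective angles alone and that the central projection preserves these angular planes, the verification reduces to the same planar identity used in dimension three and extends without change. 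I would therefore conclude by asserting that the projected force field equals $\mathrm{grad}_g U$ for
\[
U(q) = \hat m_1\cot\theta_{Z_1} + \hat m_2\cot\theta_{Z_2} + f\tan^2\theta_{Z_0},
\]
so that the system is projective in the sense of the preceding definition.
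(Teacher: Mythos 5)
Your proposal is correct and follows essentially the same route as the paper, which in fact offers no separate argument for this lemma beyond the remark that it ``can be proven similarly as in the three-dimensional case,'' i.e.\ by transcribing the computation of Lemma~\ref{lem: projective} with the norm $\|\cdot\|_a$ on $W^n$. Your added care about the directional (not just norm-level) matching of each projected force term with the spherical gradient---resolved by noting that each contribution and the corresponding gradient of $\cot\theta_{Z_i}$ or $\tan^2\theta_{Z_0}$ both lie in the two-plane through $O$, $Z_i$, $q$---is a worthwhile point that the paper's three-dimensional proof leaves implicit.
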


{This lemma implies that both the Lagrange problems in $\R^n$ and on $\mathcal{S}^n_{HS}$ are integrable. Indeed, the system in ${W^n}$ has {its energy
\begin{equation}
\label{eq: E_sp_ndim}
E_{sp}:=\frac{\|\dot{q}\|^2_a}{2} -f \| \tilde{q}\|_a^2- m_1 \| \tilde{q} - \tilde{Z}_1 \|_a^{-1} -  m_2 \| \tilde{q} - \tilde{Z}_2 \|_a^{-1}
\end{equation}
}
and {an} additional first integral {obtained from} the spherical energy {whose representation in the chart of $W^n$ is given by
\small
\begin{equation}
\label{eq: E_sph_proj_ndim}
\begin{split}
\tilde{E}_{sph}:= &\frac{1}{2} \left(\sum_{i=1}^n \dot{q_i}^2 + \sum_{1 \leq i<j\leq n} (q_i \dot{q_j} - q_j \dot{q_i})^2 \right) - f \sum_{i=1}^n q_i^2 \\& - \frac{\hat{m}_1 (1+a q_1)}{(q_1-a)^2+(1+a^2)(\sum_{i=2}^n q_i^2) }-\frac{\hat{m}_2 (1-a q_1)}{(q_1+a)^2+(1+a^2)(\sum_{i=2}^n q_i^2)},
\end{split}
\end{equation}
{where $\hat{m}_1 = m_1 \sqrt{1+a^2}, \hat{m}_2 = m_2 \sqrt{1+a^2}$.}
\normalsize
} {Additionally, the angular momentum bi-vector can be represented by an antisymmetric $n \times n$-matrix with entries }
\begin{equation}
\label{eq:Lij_ndim}
L_{{ij}}:= q_i \dot{q_j} - \dot{q_i} q_j, \quad {1} \leq i, j \leq n.
\end{equation}
{{Due to the position of the centers, the components 
$$L_{{ij}}, \qquad 2 \leq i,j \leq n $$ 
of the angular momentum} are preserved in the Lagrange system}. {They are not pairwise in involution, but from them, the following $n-2$ independent first integrals pairwise in involution are obtained} 
\begin{equation}
\label{eq: Ck_ndim}
C_k:= \sum_{2 \leq i < j \leq k} L_{{ij}}^2, \qquad 3 \leq k \leq n.
\end{equation}

\begin{lemma}
	{The $n$ functions $ E_{sp}, \tilde{E}_{sph}, C_3, C_4, \cdots, C_n$ are functionally independent and mutually in involution. Consequently, the Lagrange problem in $W^{n}$ is integrable.}
\end{lemma}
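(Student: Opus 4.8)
The plan is to verify that the $n$ functions $E_{sp}, \tilde{E}_{sph}, C_3, \dots, C_n$ (that is, $2+(n-2)$ in number) satisfy the two hypotheses of the Arnold--Liouville theorem: pairwise involution and functional independence. Throughout I work with the pulled-back bracket on $TW^n$ and invoke Proposition \ref{prop: Poisson} to pass between $TW^n$ and $T^*W^n$. I first record the structural fact underlying everything: since all three centers lie on the $q_1$-axis and the norm $\|\cdot\|_a$ coincides with the standard Euclidean one in the coordinates $q_2,\dots,q_n$, the group $SO(n-1)$ acting by rotations on $(q_2,\dots,q_n)$ is a symmetry group of the Lagrange problem in $W^n$, and its moment-map components are precisely the $L_{ij}$, $2\le i<j\le n$, of \eqref{eq:Lij_ndim}, which close into the standard $so(n-1)$ Poisson relations.

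For involution I treat the four types of brackets in turn. The identity $\{E_{sp},\tilde{E}_{sph}\}=0$ follows exactly as in Corollary \ref{cor: integrability_dim_3}: by Lemma \ref{lem: Lagrange_proj_ndim} the spatial trajectories are the projected spherical ones up to reparametrization, so the spherical energy $\tilde{E}_{sph}$ is constant along the $E_{sp}$-flow. Both $\{E_{sp},C_k\}=0$ and $\{\tilde{E}_{sph},C_k\}=0$ reduce, via the Leibniz rule, to $\{E_{sp},L_{ij}\}=0$ and $\{\tilde{E}_{sph},L_{ij}\}=0$ for $2\le i<j\le n$; these hold by Noether's theorem because both $E_{sp}$ of \eqref{eq: E_sp_ndim} and $\tilde{E}_{sph}$ of \eqref{eq: E_sph_proj_ndim} are manifestly $SO(n-1)$-invariant (the Kepler terms depend only on $q_1$ and $\sum_{i\ge2}q_i^2$, the Hooke term on $\sum_i q_i^2$, and the two kinetic parts are rotation invariant).

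The one genuinely structural point is $\{C_k,C_l\}=0$, which I would establish by the nested-Casimir (Gelfand--Tsetlin) mechanism. The function $C_k=\sum_{2\le i<j\le k}L_{ij}^2$ of \eqref{eq: Ck_ndim} is the quadratic Casimir of the subalgebra $so(k-1)\cong\mathrm{span}\{L_{ij}:2\le i<j\le k\}$; for $l\le k$ one has the inclusion $so(l-1)\subset so(k-1)$, so $C_l$ is a polynomial in the very generators with which $C_k$, being a Casimir, Poisson-commutes. The Leibniz rule then gives $\{C_k,C_l\}=0$. This is precisely where the nested ordering $so(2)\subset so(3)\subset\cdots\subset so(n-1)$ encoded in \eqref{eq: Ck_ndim} is essential: the raw components $L_{ij}$ are \emph{not} themselves pairwise in involution, which is why one passes to the tower $C_3,\dots,C_n$.

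It remains to prove functional independence, which I expect to be the main technical obstacle. First, $dC_3,\dots,dC_n$ are independent: since $L_{2k}$ enters $C_k$ for the first time and $\partial C_k/\partial L_{2l}=2L_{2l}$ for $l\le k$ and vanishes otherwise, the Jacobian $\partial(C_3,\dots,C_n)/\partial(L_{23},\dots,L_{2n})$ is triangular with nonzero diagonal $2L_{23},\dots,2L_{2n}$ at generic points. Crucially, every $dC_k$ annihilates $\partial/\partial q_1$ and $\partial/\partial\dot q_1$, because the $L_{ij}$ in \eqref{eq: Ck_ndim} involve only $q_2,\dots,q_n$ and their velocities. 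Hence, in any relation $\alpha\,dE_{sp}+\beta\,d\tilde{E}_{sph}+\sum_k\gamma_k\,dC_k=0$, pairing with $\partial/\partial q_1$ and $\partial/\partial\dot q_1$ produces a homogeneous $2\times2$ system for $(\alpha,\beta)$ whose coefficients are the $(q_1,\dot q_1)$-partials of the two energies. Evaluating at a point with $\dot q_1=0$, $q_1\neq0$, and $\sum_{m\ge2}q_m\dot q_m\neq0$ reduces the relevant determinant to $\partial_{q_1}E_{sp}\cdot(-q_1\sum_{m\ge2}q_m\dot q_m)$, which is nonzero because the radial Kepler/Hooke force $\partial_{q_1}E_{sp}$ does not vanish generically; thus $\alpha=\beta=0$, and then $\gamma_3=\cdots=\gamma_n=0$ by the triangular Jacobian. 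This shows the $n$ functions are independent and, with the involution established above, the Lagrange problem in $W^n$ is integrable; the same scheme applies to the spherical and hyperbolic higher-dimensional cases through their respective projective correspondences.
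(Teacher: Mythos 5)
Your proposal is correct and follows essentially the same route as the paper: the involution of the $C_k$ is obtained from the $so(n-1)$ commutation relations of the $L_{ij}$ (the paper carries out the Kronecker-delta computation that your Casimir/Gelfand--Tsetlin phrasing packages conceptually), the brackets with the two energies are handled by Noether/rotation invariance exactly as you do, and independence of the $C_k$ rests on the same triangular Jacobian in the $L_{2l}$. Your explicit $2\times 2$ determinant argument for the independence of $E_{sp}$ and $\tilde{E}_{sph}$ from the $C_k$ is in fact more detailed than the paper, which merely asserts this step.
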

\begin{proof}
{{It is not hard to see that they are functionally independent.} Indeed, if we consider the Jacobian matrix
\begin{equation*}
	\begin{pmatrix}
		\frac{\partial C_{3}}{\partial \partial q_2} & \frac{\partial C_{3}}{\partial \partial q_3} & \cdots & \frac{ \partial C_3}{\partial q_n}& \frac{\partial C_{3}}{\partial \partial \dot{q_2}} & \frac{\partial C_{3}}{\partial \partial \dot{q_3}} & \cdots & \frac{ \partial C_3}{\partial \dot{q_n}} \\
		\frac{\partial C_{4}}{\partial \partial q_2} & \frac{\partial C_{4}}{\partial \partial q_3} & \cdots & \frac{ \partial C_4}{\partial q_n} & \frac{\partial C_{4}}{\partial \partial \dot{q_2}} & \frac{\partial C_{4}}{\partial \partial \dot{q_3}} & \cdots & \frac{ \partial C_4}{\partial \dot{q_n}}\\
		\vdots & \vdots & \ddots & \vdots & \vdots & \vdots & \ddots & \vdots \\
		\frac{\partial C_{n}}{\partial \partial q_2} & \frac{\partial C_{n}}{\partial \partial q_3} & \cdots & \frac{ \partial C_n}{\partial q_n} & \frac{\partial C_{n}}{\partial \partial \dot{q_2}} & \frac{\partial C_{n}}{\partial \partial \dot{q_3}} & \cdots & \frac{ \partial C_n}{\partial \dot{q_n}}\\
	\end{pmatrix}
\end{equation*}	
and its $(n-2)\times (n-1)$-submatrix given by 
\begin{equation}
	\begin{pmatrix}
		\frac{\partial C_{3}}{\partial \partial q_2} & \frac{\partial C_{3}}{\partial \partial q_3} & \cdots & \frac{ \partial C_3}{\partial q_n} \\
		\frac{\partial C_{4}}{\partial \partial q_2} & \frac{\partial C_{4}}{\partial \partial q_3} & \cdots & \frac{ \partial C_4}{\partial q_n} \\
		\vdots & \vdots & \ddots & \vdots \\
		\frac{\partial C_{n}}{\partial \partial q_2} & \frac{\partial C_{n}}{\partial \partial q_3} & \cdots & \frac{ \partial C_n}{\partial q_n} \\
	\end{pmatrix}
\end{equation}
whose $(i,j)$-entries for $i+1<j$ are zeros and other entries are non-zero. Hence  this submatrix and also the original Jacobian matrix has {rank $n-2$}.
}

\marginpar{
}
{
{To see that they are also in involution} , we use the following relation on the Poisson bracket of angular momentum {components}:
\[
\{L_{k_1 k_2 }, L_{ \ell_1 \ell_2}    \} = \delta_{k_1,\ell_1}L_{k_2 \ell_2}+ \delta_{k_2,\ell_2}L_{k_1 \ell_1} - \delta_{k_1,\ell_2}L_{k_2 \ell_1} - \delta_{k_2,\ell_1}L_{k_1 \ell_2}. 
\]
{From this relation, when $2< k < \ell_1 <\ell_2 \leq n$, we have 
\begin{equation*}
\begin{split}
\{C_k,L_{\ell_1 \ell_2}\}&= \sum_{2<k_1<k_2\leq k} \{  L_{k_1 k_2 }^2, L_{ \ell_1 \ell_2}   \}  \\
&= 2\sum_{2<k_1<k_2\leq h} \{  L_{k_1 k_2 }, L_{ \ell_1 \ell_2}   \} \\
& = 0 
\end{split}
\end{equation*}
    since all indices $k_1, k_2, \ell_1, \ell_2$ are different.  

Additionally, when 
$$2 \leq \ell_1 <\ell_2 \leq k \leq n, $$
we have that
\small
\begin{align*}
\{C_k,L_{\ell_1 \ell_2}\} &=\sum_{2<k_1<k_2\leq k} \{  L_{k_1 k_2 }^2, L_{ \ell_1 \ell_2}   \} \\
&=2\sum_{2<k_1<k_2\leq k} \{  L_{k_1 k_2 }, L_{ \ell_1 \ell_2}   \} \\
&= 2\left(  \sum_{\ell_1 < k_2 \leq k }L_{k_2 \ell_2}L_{\ell_1 k_2}+ \sum_{2< k_1 <\ell_2}L_{k_1 \ell_1}L_{k_1 \ell_2}\right)\\
 &\phantom{=} -2\left(\sum_{2< k_1 <\ell_1}L_{k_1 \ell_2} L_{k_1 \ell_1} +\sum_{\ell_2< k_2 \leq k}L_{k_2 \ell_1} L_{\ell_2 k_2 }\right)\\
&=2 \left(\sum_{\ell_1< k_2 \leq \ell_2} L_{k_2 \ell_2 } L_{\ell_1 k_2} + \sum_{\ell_1 \leq k_1 <\ell_2} L_{k_1 \ell_1} L_{k_1 \ell_2} \right)\\
& = 0.
\end{align*}
\normalsize
Combining these cases, we obtain 
\[
\{C_k, L_{\ell_1\ell_2}  \}=0
\]
when $ \ell_1<\ell_2 \leq k, k \leq \ell_1<\ell_2$.
}
Using this, we get
\[
\{C_k, C_{\ell}  \} = 0 
\]
for any $3 \leq k, \ell \leq n$. 

Also, they are independent of the spatial energy $E_{sp}$ and the {projected} spherical energy $\tilde{E}_{sph}$, and are Poisson commutative with both energies. Indeed, since $C_{k}$ for $3 \leq k \leq n$ are first integrals of the spatial system, we have
\[
\{E_{sp}, C_k  \} =0, \quad 3 \leq k \leq n.
\]
{The other Poisson-commutativity }
\[
\{\tilde{E}_{sph}, C_k  \} =0, \quad 3 \leq k \leq n
\]
follows from the same argument as in the three-dimensional case {explained in Section \ref{sec: main_resutls}}, that the spherical energy is invariant under the rotation with respect to the {$q_1$-axis,} \emph{i.e.}
\[
\{ \tilde{E}_{sph}, L_{ij}  \} =0, \quad \quad 2 \leq i,j \leq n.
\]
}
	
\end{proof}

{The spherical Lagrange system on $\mathcal{S}^n_{SH}$ {has the} energy 
\begin{equation}
\label{eq: E_sph_ndim}
E_{sph}:= \frac{1}{2} \sum_{i=1}^{n+1} q_i'^2 - \frac{\hat{m}_1}{\tan \theta_{Z_1}}  - \frac{\hat{m}_2}{\tan \theta_{Z_2}} - f \tan^2 \theta_{Z_0},
\end{equation}
{where $q_i'$ is the time derivative with respect to the time parameter $\tau$ defined as \eqref{eq: time_parameter_change_ndim}.}
{Besides this,} the projection of $E_{sp}$ expressed in the external coordinates for $\mathcal{S}^n$ as:
\footnotesize
\begin{equation}
\label{eq: E_sp_proj_ndim}
\begin{split}
	\hat{E}_{sp}:= &\frac{1}{2} \left(\frac{(q_{n+1}q_1' - q_1 q_{n+1}')^2}{1+a^2} +\sum_{i =2}^n ( q_{n+1} q_i' - q_i q_{n+1}' )^2  \right)-f\left(\frac{q_1^2}{q_4^2(1+a^2)} + \sum_{i =2}^{n} \frac{q_i^2}{q_{n+1}^2}  \right)\\
	& -m_1\left(\frac{(-q_1/q_{n+1} -a)^2}{1+a^2} + \sum_{i=2}^n \frac{q_i^2}{q_{n+1}^2}    \right)^{-1/2} - m_2 \left(\frac{(-q_1/q_{n+1} +a)^2}{1+a^2} + \sum_{i=2}^n \frac{q_i^2}{q_{n+1}^2}    \right)^{-1/2}, 
	\end{split}
\end{equation}
\normalsize
{where $m_1 = \frac{\hat{m}_1}{\sqrt{1+a^2}}, m_2 = \frac{\hat{m}_2}{\sqrt{1+a^2}}$,} is preserved.}

{{The {projections} of the angular momentum components}(which are just the components of the angular momentum in $\R^{n+1}$) are given by
	\[
	\hat{L}_{ij} = q_i q_j' - q_i' q_j, \quad 1 \leq i, j \leq n.
	\]
	From them we obtain the $n-2$ independent first integrals for the spherical Lagrange system on $\mathcal{S}^n_{SH}$:
	\begin{equation}
	\label{eq: Ck_proj_ndim}
	\hat{C}_k:= \sum_{2 \leq i < j \leq k} \hat{L}_{{i}j}^2, \qquad 3 \leq k \leq n.
	\end{equation}
	They are functionally independent and in involution as well as {in} the {previous case.} 
}

{We now consider billiard systems on $n$-dimensional space forms with the Lagrange problems defined on them. 
	As in the three-dimensional case, we require that these quadrics are {confocal} in the following sense:}
	
First, for $\R^{n}$ we equip it with an appropriate norm. We fix the Kepler centers in $\R^n$ at $\tilde{Z}_1= (a, 0, \cdots, 0), \tilde{Z}_2= (-a, 0, \cdots, 0 )$.

{Foci for quadrics in $\R^n$ are defined in the same way as in $\R^3$.}
\begin{defi}
	{A quadric with two foci in $\R^n$ is called confocal if it is focused at  $\tilde{Z}_1$ and $\tilde{Z_2}$.}
\end{defi}

{Such a quadric is centered at the Hooke center $\tilde{Z}_0 = (0, \cdots, 0)$, and is, after normalization, given by 
\[
\frac{q_1^2}{A^2} + \sum_{i = 1}^{n} \frac{q_i^2}{B^2} = 1, \quad {A, B >0}
\]
for a {(higher-dimensional)} spheroid, and by
\[
\frac{q_1^2}{A^2} - \sum_{i = 1}^{n} \frac{q_i^2}{B^2} = 1, \quad {A, B >0}
\]
for a {(higher-dimensional)} circular hyperboloid of two sheets.}

{For the spherical Lagrange problem, we consider spherical confocal quadrics in $\mathcal{S}^n$ as reflection {walls} which are defined as follows.}
{We set the Kepler centers $Z_1,Z_2$ on $\mathcal{S}^n_{SH}$ as the projection of $\tilde{Z}_1, \tilde{Z}_2$:
\[
Z_1 = \left(\frac{a}{\sqrt{1+a^2}}, 0, \cdots, 0, -\frac{1}{\sqrt{1+a^2}}\right), Z_2 = \left(-\frac{a}{\sqrt{1+a^2}}, 0, \cdots, 0, -\frac{1}{\sqrt{1+a^2}}\right)
\]
We equip $\mathcal{S}^n$ with the round metric induced from the standard Euclidean metric of $\R^{n+1}$.
}

{Foci for spherical quadrics on $\mathcal{S}^n$ are defined in the same way as on $\mathcal{S}^3$.}
\begin{defi}
	{A spherical quadric with two foci on $\mathcal{S}^n_{SH}$ or on $\mathcal{S}$ is called confocal if it is focused at $Z_1,Z_2$.}
\end{defi}

{A confocal quadric on $\mathcal{S}^n_{SH}$ has its center at the Hooke center $Z_0 = (0,\cdots, 0, -1)$ and is, after normalization, given by 	\begin{equation}
	\left\{
	\begin{array}{l}
		\sum_{i=1}^{n+1} q_i^2 =1, \quad q_{n+1}<0 \\
		\frac{q_1^2}{A^2}+\sum_{i=2}^{n} \frac{q_i^2}{B^2} - q_{n+1}^2 = 0, \quad {A,B>0}
	\end{array}
	\right.
\end{equation}
for a {(higher-dimensional)} spheroid, and by
\begin{equation}
	\left\{
	\begin{array}{l}
		\sum_{i=1}^{n+1} q_i^2 =1,\quad q_{n+1}<0 \\
		\frac{q_1^2}{A^2}-\sum_{i=2}^{n} \frac{q_i^2}{B^2} - q_{n+1}^2 = 0, \quad {A,B>0}
	\end{array}
	\right.
\end{equation}
for a {(higher-dimensional)}circular hyperboloid of two sheets.}

{These reflection walls are highly symmetric, and the first integrals {$\{C_{k}\}_{k=3}^n$} obtained from angular momentum for the Lagrange problem in $\R^n$ are conserved under reflections at them.} 
This immediately follows from the lemma below. 

\begin{lemma}
	\label{lem: angular_momentum_pres_ndim}
	{The angular momentum $L_{ij}$ for $2 \leq i,j \leq n$ are conserved under reflections at a confocal quadric reflection wall $\tilde{\mathcal{B}}$ in {$(W^n, \| \cdot \|_a)$}.
	}
\end{lemma}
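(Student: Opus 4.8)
The plan is to carry out the same kind of direct verification that was used for $L_{yz}$ in the proof of Theorem \ref{thm: spatial_Lagrange}, now tracking all the cross-components $L_{ij}$ with $2\le i,j\le n$ simultaneously. The single structural fact that makes everything work is the rotational symmetry of $\tilde{\mathcal{B}}$ about the line of centers (the $q_1$-axis), together with the fact that the norm $\|\cdot\|_a$ rescales only the $q_1$-direction. First I would record the reflection law: an elastic reflection at $\tilde{\mathcal{B}}$ with respect to $\|\cdot\|_a$ preserves the tangential component of the velocity and reverses its normal component, both taken relative to the metric $g_a$ associated to $\|\cdot\|_a$. Hence if $\dot{\tilde{q}}$ is the incoming velocity and $v$ the outgoing one at $\tilde{q}\in\tilde{\mathcal{B}}$, then
\[
v = \dot{\tilde{q}} - c\,n
\]
for some scalar $c$, where $n$ is any $g_a$-normal vector to $\tilde{\mathcal{B}}$ at $\tilde{q}$.

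Next I would compute $n$ explicitly. Writing the spheroid as $f=\tfrac{q_1^2}{A^2}+\sum_{i=2}^{n}\tfrac{q_i^2}{B^2}-1=0$ and using that the matrix of $g_a$ is $\mathrm{diag}(1/(1+a^2),1,\dots,1)$, the $g_a$-gradient is $\nabla_{g_a}f = G_a^{-1}\nabla f$, so one may take
\[
n = \left(\frac{(1+a^2)\,q_1}{A^2},\ \frac{q_2}{B^2},\ \dots,\ \frac{q_n}{B^2}\right).
\]
The point to emphasize is that for every $i\ge 2$ the component $n_i$ equals $q_i/B^2$; that is, the part of $n$ orthogonal to the axis is parallel to the orthogonal-to-axis part of the position vector. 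This is exactly the manifestation of the symmetry about the $q_1$-axis, and it is the only property of $n$ that the argument will use.

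The conservation is then immediate. For $2\le i<j\le n$, using $v = \dot{\tilde{q}} - c\,n$ and that the reflection point $\tilde{q}$ is fixed,
\[
q_i v_j - v_i q_j = (q_i \dot{q}_j - \dot{q}_i q_j) - c\,(q_i n_j - n_i q_j) = L_{ij} - \frac{c}{B^2}\,(q_i q_j - q_i q_j) = L_{ij},
\]
so every $L_{ij}$ with $2\le i,j\le n$ is unchanged by the reflection. For a circular hyperboloid of two sheets the identical computation applies after replacing $B^2$ by $-B^2$ (equivalently, taking $B$ purely imaginary), since this leaves the decisive proportionality $n_i\propto q_i$ for $i\ge 2$ intact. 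The main, and essentially only, obstacle is bookkeeping: one must take the normal with respect to $g_a$ rather than the Euclidean metric, so that the extra factor $1+a^2$ appears solely in the $q_1$-component and therefore never enters the cancellation among the indices $i,j\ge 2$. Once this is observed, the desired first integrals $\{C_k\}_{k=3}^{n}$, being polynomials in these $L_{ij}$, are conserved under reflections as well.
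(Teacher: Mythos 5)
Your proof is correct and follows essentially the same route as the paper's: a direct verification that the elastic reflection changes the velocity only along the $g_a$-normal, whose components orthogonal to the axis of symmetry are proportional to the corresponding components of the position vector, so that $q_i n_j - n_i q_j = 0$ for $i,j \ge 2$. The only cosmetic difference is that the paper first uses the rotational symmetry to assume $q_j = 0$ and then checks $v_j = \dot{q}_j$, whereas you keep a general reflection point and exhibit the cancellation explicitly; your version is, if anything, slightly more self-contained.
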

\begin{proof}
	{Let $\mathcal{B}$ be {a spheroid} given by the normalized equation
	\[
	\frac{q_1^2}{A^2} + \sum_{k =2}^{n} \frac{q_k^2}{B^2} = 1
	\]
	Fix the integers $i,j$ such that $2 \leq i,j \leq n$ and consider the angular momentum $L_{i j}$ before and after the reflection at $\tilde{q} \in \tilde{\mathcal{B}}$. Without loss of generality, we can assume that the reflection point $\tilde{q} = (q_1, q_2, \cdots, q_n) \in \tilde{\mathcal{B}}$ has zero $q_j$-component due to the symmetry of $\tilde{\mathcal{B}}$.
	Let $\dot{\tilde{q}} = (\dot{q_1}, \dot{q_2}, \cdots, \dot{q_n})$ be the velocity vector before the reflection at $\tilde{q}$. From the similar computation as in the proof of Theorem \ref{thm: spatial_Lagrange}, we obtain that the $q_j$-components of the velocity vector $v$ after the reflection is given by 
	\[
	v_{j} = \dot{q_j}.
	\] 
	Thus, the angular momentum $L_{i j}$ after the reflection is obtained as
	\[
	v_{j}q_i = \dot{q_j} q_i
	\]
	which is precisely $L_{i j}$ before the reflection.}
	
	{The case of {a circular hyperboloid of two sheets} as wall can be treated similarly.}
\end{proof}

{{Similarly, one can directly check} that the first integrals $\hat{C}_k, 3 \leq k \leq n$ for the spherical Lagrange problem on $\mathcal{S}^n_{SH}$ are conserved under reflections at  a confocal quadrics on $\mathcal{S}^n_{SH}$. Alternatively this follows form the lemma below.} 

We can see that such confocal quadric reflection walls in {$(W^n,  \| \cdot\|_{a})$} and on $\mathcal{S}^n_{SH}$ are in projective correspondence as well, {by the very same argument} as in the three-dimensional case{:}

\begin{lemma}
	\label{lem: reflection_ndim}
	Let $\tilde{\mathcal{B}}$ be a centered confocal quadric reflection wall in $(W^n, \| \cdot\|_{a})$ with foci at $F_1= (a, 0, \cdots, 0)$ and $F_2 = (-a,0, \cdots, 0)$. Let $\mathcal{B}$ be the quadric on {$\mathcal{S}^n_{SH}$} given as the projection of $\tilde{\mathcal{B}}$. Then $\mathcal{B}$ is a centered confocal quadric having its foci at the projection of $F_1$ and $F_2$ on $\mathcal{S}^n_{SH}$. Moreover, $\tilde{\mathcal{B}}$ and $\mathcal{B}$ are in projective correspondence. 
\end{lemma}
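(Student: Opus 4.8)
The plan is to reproduce the three-dimensional argument of Lemma~\ref{lem: reflection_law} almost verbatim, since the only dimension-sensitive ingredient---the push-forward of a normal vector under the central projection---generalizes componentwise. First I would fix $\tilde{\mathcal{B}}$ to be the normalized spheroid
\[
\frac{q_1^2}{A^2} + \sum_{i=2}^{n} \frac{q_i^2}{B^2} = 1, \qquad A>B>0,
\]
in the chart $W^n$, with the parameters tied to the metric by $1+a^2 = \frac{A^2+1}{B^2+1}$, so that its foci in $(W^n, \|\cdot\|_a)$ are exactly $F_1$ and $F_2$. Its central projection $\mathcal{B}$ onto $\mathcal{S}^n_{SH}$ is then cut out, just as in the three-dimensional case, by
\[
\frac{q_1^2}{A^2} + \sum_{i=2}^{n} \frac{q_i^2}{B^2} - q_{n+1}^2 = 0, \qquad q_{n+1}<0.
\]

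To prove that $\mathcal{B}$ is confocal with foci at the projections of $F_1$ and $F_2$, I would reduce to the planar statement already established in Lemma~\ref{lem: reflection_law}. Both $\tilde{\mathcal{B}}$ and $\mathcal{B}$ are invariant under the $SO(n-1)$-action rotating the coordinates $q_2,\ldots,q_n$, which pointwise fixes the $q_1$-axis where the candidate foci lie. Because $\mathcal{B}$ is a hypersurface of revolution about this axis, the spherical distance from an axial point to a surface point depends only on the $q_1$-coordinate and the distance to the axis; hence the defining focal relation---constant sum, or constant absolute difference, of the two distances---holds on all of $\mathcal{B}$ as soon as it holds on one two-dimensional section through the axis. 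That section is precisely the spherical ellipse treated in Lemma~\ref{lem: reflection_law}, which pins down the foci.

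For the projective correspondence I would invoke the same reduction as in the three-dimensional proof: since $\tilde{\mathcal{B}}$ and $\mathcal{B}$ are centrally projected to each other, the tangential components of the incoming and reflected velocities already correspond, so it suffices to check that a normal vector to $\mathcal{B}$ pushes forward to a normal vector to $\tilde{\mathcal{B}}$ with respect to $\|\cdot\|_a$. The Euclidean gradient of the defining function yields the normal
\[
N = \left( \frac{2q_1}{A^2}, \frac{2q_2}{B^2}, \ldots, \frac{2q_n}{B^2}, -2q_{n+1} \right),
\]
and applying the differential of the central projection (whose matrix carries $-1/q_{n+1}$ on the first $n$ diagonal entries and $q_i/q_{n+1}^2$ in the last column) produces a vector with components
\[
n_1 = -\frac{2q_1(A^2+1)}{q_{n+1}A^2}, \qquad n_i = -\frac{2q_i(B^2+1)}{q_{n+1}B^2} \quad (2\le i\le n).
\]
On the other side, the $\|\cdot\|_a$-gradient of the defining function of $\tilde{\mathcal{B}}$ is obtained by multiplying its first Euclidean partial by $1+a^2$, since the inverse metric of $\|\cdot\|_a$ is $\mathrm{diag}(1+a^2,1,\ldots,1)$; inserting $1+a^2 = \frac{A^2+1}{B^2+1}$ shows this gradient equals $n$ up to the scalar $(B^2+1)^{-1}$. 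Thus normals correspond, which gives the projective correspondence, and the circular hyperboloid case follows by the analytic continuation $B\mapsto iB$ exactly as in Lemma~\ref{lem: reflection_law}.

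I do not expect a genuine obstacle, as the statement is a literal dimensional lift of Lemma~\ref{lem: reflection_law}; the only new structural input is that the conserved components $L_{ij}$ with $2\le i,j\le n$ generate the full $SO(n-1)$ symmetry that renders both the focal reduction and the normal computation insensitive to dimension. The single point demanding care is the focus claim: one must verify that the metric notion of foci, defined through pairwise distances, is correctly inherited under the revolution symmetry rather than merely assumed, and this is exactly what the hypersurface-of-revolution observation supplies.
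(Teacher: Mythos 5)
Your proposal is correct and follows essentially the same route as the paper, which for Lemma \ref{lem: reflection_ndim} gives no separate argument but simply defers to the three-dimensional proof of Lemma \ref{lem: reflection_law}; your write-up is exactly that proof lifted to dimension $n$, and the pushed-forward normal, the identity $1+a^2=(A^2+1)/(B^2+1)$, and the $SO(n-1)$-reduction of the focal property to a planar section are all carried out correctly. The only cosmetic remark is that the closing comment about the $L_{ij}$ generating the rotational symmetry belongs to Lemma \ref{lem: angular_momentum_pres_ndim} rather than to this statement, but it does not affect the argument.
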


{From this lemma, we {conclude} that $\tilde{E}_{sph}$ is preserved under reflection at a confocal quadric in $W^n$.  {Similarly}, $\hat{E}_{pl}$ is preserved under reflections at a spherical confocal quadric on $\mathcal{S}_{SH}^n$. } {Additionally, the preservation of $\{\hat{C}_k\}_{k=3}^n$ also follows from Lemma \ref{lem: angular_momentum_pres_ndim} and Lemma \ref{lem: reflection_ndim}.}

By combining Lemma \ref{lem: Lagrange_proj_ndim} and Lemma \ref{lem: reflection_ndim}, {we obtain} 
\begin{theorem}
	{The mechanical billiard systems defined in the $n$-dimensional Eulidean space with the {Lagrange problem} and with any {finite} combination of confocal quadrics with {foci} at the two Kepler centers as reflection walls, are integrable.}
	
	{As subcases, we obtain the following results{:} The mechanical billiard problems defined in the $n$-dimensional Euclidean space
	\begin{itemize}
		\item with the $n$-dimensional two-center problem with any finite combination of confocal quadrics with foci at the two centers as reflection walls;
		\item with the $n$-dimensional Kepler problem and with any finite combination of confocal quadrics with one of the foci at the Kepler center as reflection walls;
		\item with the $n$-dimensional Hooke problem and with any finite combination of confocal quadrics centered at the Hooke center as reflection walls,
	\end{itemize}
	}
	{are all integrable.} {The first integrals are $E_{sp}$ \eqref{eq: E_sp_ndim}, $\tilde{E}_{sph}$ \eqref{eq: E_sph_proj_ndim}, and {the functions $C_k$} \eqref{eq: Ck_ndim}} for {$3 \leq k \leq n $}. 
\end{theorem}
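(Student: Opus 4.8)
The plan is to assemble the ingredients already prepared for the $n$-dimensional setting and to check that each of the $n$ first integrals survives the reflections. By Lemma~\ref{lem: Lagrange_proj_ndim} together with the lemma immediately preceding this theorem, the functions $E_{sp}$, $\tilde{E}_{sph}$, and $C_3, \ldots, C_n$ are $n$ functionally independent first integrals of the underlying Lagrange problem in $(W^n, \|\cdot\|_a)$ which are mutually in involution. Thus the continuous-time system is already integrable, and it remains only to verify that these same $n$ functions are invariant under elastic reflections at the confocal quadric walls. Since each reflection preserves all the integrals individually, the case of a finite combination of walls reduces to the case of a single confocal quadric $\tilde{\mathcal{B}}$, which is what I would treat.

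First I would dispose of the energy $E_{sp}$. Elastic reflection at $\tilde{\mathcal{B}}$ fixes the reflection point and preserves the $\|\cdot\|_a$-norm of the velocity, so both the kinetic and the potential parts of $E_{sp}$ are unchanged; hence $E_{sp}$ is automatically a first integral of the billiard. Next, for the integrals $C_k = \sum_{2 \leq i < j \leq k} L_{ij}^2$, I would invoke Lemma~\ref{lem: angular_momentum_pres_ndim}, which asserts that every angular-momentum component $L_{ij}$ with $2 \leq i,j \leq n$ is preserved under reflection at $\tilde{\mathcal{B}}$. Because each $C_k$ is a polynomial in precisely these preserved components, every $C_k$ is preserved as well.

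The remaining integral $\tilde{E}_{sph}$ is handled through projective correspondence, exactly as in the three-dimensional case of Theorem~\ref{thm: spatial_Lagrange}. By Lemma~\ref{lem: reflection_ndim}, the wall $\tilde{\mathcal{B}}$ and its central projection $\mathcal{B}$ on $\mathcal{S}^n_{SH}$ are in projective correspondence, so that, under the central projection and the associated time reparametrization, reflection in $W^n$ at $\tilde{\mathcal{B}}$ carries over to the elastic reflection on $\mathcal{S}^n_{SH}$ at $\mathcal{B}$. The spherical energy $E_{sph}$ is preserved by that spherical reflection, again because the reflection is elastic and fixes the point, and $\tilde{E}_{sph}$ is by construction $E_{sph}$ read in the chart $W^n$; hence the value of $\tilde{E}_{sph}$ is unchanged across the reflection at $\tilde{\mathcal{B}}$. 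Combining the three verifications, $E_{sp}$, $\tilde{E}_{sph}$, and $C_3, \ldots, C_n$ are $n$ independent, pairwise commuting first integrals invariant under the reflections, which is exactly the integrability of the billiard; the subcases (two-center, Kepler, Hooke) then follow by specializing the parameters $m_1, m_2, f$, with no change to the argument.

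I would expect the only genuinely delicate point to be the $\tilde{E}_{sph}$ step, since it rests on the fact that central projection commutes with reflection at confocal quadrics; this is precisely the content of Lemma~\ref{lem: reflection_ndim} and constitutes the geometric heart of the construction. Everything else is formal: the involution and independence are inherited from the continuous system, while the invariance of $E_{sp}$ and of the $C_k$ is immediate from elasticity and from Lemma~\ref{lem: angular_momentum_pres_ndim}, respectively.
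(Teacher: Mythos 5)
Your proposal is correct and follows essentially the same route as the paper: integrability of the underlying flow from Lemma \ref{lem: Lagrange_proj_ndim} and the involution/independence lemma, invariance of the $C_k$ from Lemma \ref{lem: angular_momentum_pres_ndim}, and invariance of $\tilde{E}_{sph}$ from the projective correspondence of Lemma \ref{lem: reflection_ndim}, exactly as the paper combines these lemmas. You also correctly single out the projective correspondence of the walls as the geometric heart of the argument.
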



{{For the spherical case}, we can extend the integrability to the corresponding billiard system defined on the whole $n$-dimensional sphere since we can analytically extend the $n$ first integrals defined on $\mathcal{S}^n_{SH}$ for the hemispherical system to the whole sphere $\mathcal{S}^n$.}
}}
{Indeed, by analyticity, the kinetic part of $\hat{E}_{sp}$ extends to the {whole} sphere $\mathcal{S}^n$ and the potential part of $\hat{E}_{sp}$ extends to $\mathcal{S}^n$, outside of the singularities at the Kepler centers and their {antipodal points} and at the equator $\{ q_{n+1} =0 \}\cap \mathcal{S}^n$ {when the Hooke center indeed exists}. The other first integrals $\hat{C}_k$ extends to $\mathcal{S}^n$ due to the {constructions} of the angular momentum components $\hat{L}_{i j}$.} {See also the extension argument for the three-dimensional case in the proof of Cororally \ref{cor: integrability_dim_3}.} 

\begin{theorem}
	{The spherical mechanical billiard systems defined on $\mathcal{S}^{n}$ with the Lagrange problem and with any {finite} combination of confocal symmetric centered quadrics, with foci at the two Kepler centers as reflection walls{,} are integrable.}
	
	{As subcases, we obtain the following results{:} The mechanical billiard problems defined on the $n$-dimensional sphere $\mathcal{S}^n$}
	\begin{itemize}
		\item {with the $n$-dimensional spherical two-center problem with any finite combination of confocal quadrics on $\mathcal{S}^n$  with foci at the two centers as reflection walls;}
		\item {with the $n$-dimensional spherical Kepler problem and with any finite combination of confocal quadrics on $\mathcal{S}^n$ with one of the foci at the Kepler center as reflection walls;}
		\item {with the $n$-dimensional spherical Hooke problem and with any finite combination of confocal quadrics on $\mathcal{S}^n$ centered at the Hooke center as reflection walls,}
	\end{itemize}
	are all integrable. {The first integrals are $E_{sph}$ \eqref{eq: E_sph_ndim}, $\hat{E}_{sp}$ \eqref{eq: E_sp_proj_ndim}, and $\hat{C}_k$ \eqref{eq: Ck_proj_ndim} for $3 \leq k \leq n$}.
\end{theorem}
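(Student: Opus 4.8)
The plan is to assemble the $n$ required first integrals directly on $\mathcal{S}^n_{SH}$, verify that each is invariant under the reflections against the confocal quadric walls, and then pass to the whole sphere $\mathcal{S}^n$ by analytic continuation. The subcases will then follow by specializing the mass parameters.

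First I would recall that the spherical Lagrange problem on $\mathcal{S}^n_{SH}$ already carries the $n$ functions $E_{sph}$ \eqref{eq: E_sph_ndim}, $\hat{E}_{sp}$ \eqref{eq: E_sp_proj_ndim}, and $\hat{C}_k$ \eqref{eq: Ck_proj_ndim} for $3 \le k \le n$, which were shown to be functionally independent and pairwise in involution, mirroring the situation in $W^n$. Thus the wall-less spherical Lagrange problem is already integrable, and it only remains to check that each of these integrals is preserved across the reflections. The energy $E_{sph}$ is preserved automatically, since an elastic reflection keeps both the speed and the reflection point fixed. For $\hat{E}_{sp}$, note that it is the projection of the spatial energy $E_{sp}$, which, being the energy of the billiard in $W^n$, is preserved at reflections against $\tilde{\mathcal{B}}$; because the walls $\tilde{\mathcal{B}}$ and $\mathcal{B}$ are in projective correspondence by Lemma \ref{lem: reflection_ndim} (so that reflection and central projection commute on them), $\hat{E}_{sp}$ is likewise preserved at $\mathcal{B}$. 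For the $\hat{C}_k$ I would invoke Lemma \ref{lem: angular_momentum_pres_ndim}, which gives the preservation of the angular-momentum components $L_{ij}$, $2 \le i,j \le n$, at reflections against $\tilde{\mathcal{B}}$; their projections $\hat{L}_{ij}$ are then preserved at $\mathcal{B}$ by the same correspondence, and hence so are the quantities $\hat{C}_k = \sum_{2 \le i<j \le k} \hat{L}_{ij}^2$.

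Second, I would pass from $\mathcal{S}^n_{SH}$ to the whole sphere $\mathcal{S}^n$ exactly as in the three-dimensional Corollary \ref{cor: integrability_dim_3}. The kinetic part of $\hat{E}_{sp}$ is polynomial in the external coordinates and extends to all of $\mathcal{S}^n$; its potential part extends away from the Kepler centers, their antipodes, and the equator $\{q_{n+1}=0\}$ (the latter only relevant when a Hooke center is present); and each $\hat{C}_k$ is manifestly globally defined through the $\hat{L}_{ij}$. Since both functional independence and the vanishing of the Poisson brackets are analytic conditions that already hold on the open set $\mathcal{S}^n_{SH}$, they persist throughout the connected domain of definition inside $\mathcal{S}^n$. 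Combined with the reflection-invariance just established, this yields integrability of the billiard on $\mathcal{S}^n$; the three listed subcases are recovered by setting $f=0$, $m_1=f=0$ (or $m_2=f=0$), and $m_1=m_2=0$, respectively.

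The main obstacle I expect lies in the reflection-invariance step. One must ensure that the projective correspondence of Lemma \ref{lem: reflection_ndim} genuinely intertwines the reflected velocity on $\mathcal{S}^n_{SH}$ with the reflected velocity in $W^n$, so that the projection of a reflection-invariant quantity in $W^n$ is again reflection-invariant; this is precisely the commutation of reflection with central projection guaranteed for confocal quadrics, but it is the point at which a careless argument would fail. A secondary point to keep in mind is that the potential singularities appearing upon extension to $\mathcal{S}^n$ must not obstruct the global involution, which is handled by analyticity on the open dense complement of those singularities.
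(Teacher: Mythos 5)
Your proposal is correct and follows essentially the same route as the paper: it assembles the integrals $E_{sph}$, $\hat{E}_{sp}$, $\hat{C}_k$ on the hemisphere, derives their reflection-invariance from the projective correspondence of confocal walls (Lemma \ref{lem: reflection_ndim}) together with the angular-momentum preservation of Lemma \ref{lem: angular_momentum_pres_ndim}, and then extends to all of $\mathcal{S}^n$ by analytic continuation away from the singular set, exactly as in Corollary \ref{cor: integrability_dim_3}. Your explicit remark that independence and involutivity persist under analytic continuation is a slightly more careful phrasing of what the paper leaves implicit, but it is not a different argument.
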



{Analogously, we obtain the integrability for the Lagrange billiard systems {with confocal quadrics} in the $n$-dimensional hyperbolic space. For this purpose, we {consider} the $n+1$-dimensional Minkowski space $\R^{n,1}$ equipped with the pseudo-Riemannian metric given by
\begin{equation}
\label{eq: Minkowski_metric_ndim}
\sum_{i=1}^n dq_i^2 - dq_{n+1}^2, 
\end{equation}
and the hyperboloid {model} $\mathcal{H}^n$ in $\R^{n,1}$ given by the equation 
\[
\sum_{i=1}^n q_i^2 - q_{n+1}^2 = -1. 
\]
The hyperboloid {model} $\mathcal{H}^n$ equipped with the hyperbolic metric \eqref{eq: Minkowski_metric_ndim} is a Riemannian manifold. We take the lower sheet of the hyperboloid 
\[
\{(q_1, q_2, \cdots, q_{n+1}) \in \mathcal{H}^n \mid q_{n+1}<0   \}
\]
{and denote it by $\mathcal{H}^n_S$.}

{We now consider the $n$-dimensional subspace 
$$W^n_H:= \{(q_1, q_2. \cdots , q_n, -1) \} \subset \R^{n,1}.$$
The central projection projects $\mathcal{H}^n_S$ to the $n$-dimensional Beltrami-Klein ball 
$$B^n:=\{(q_1, q_2, \cdots, q_n) \in W^n_H \mid  \sum_{i=1}^n q_i^2 <1  \} \subset W^n_H.$$
We equip $W^n_H$ {with the norm  induced} from the metric of $\R^{n,1}$, {which subsequently induces a norm on $B^n$.} 

As in the spherical case, we project a force field on $B^n$ to $\mathcal{H}^n_S$ with the new time variable $\tau$ 
on the push-forward $F_{W^n_H}$ by the central projection.}

The Lagrange problem in $\mathcal{H}^n_{S}$ is defined analogously as Definition \ref{def: Lagrange_hyperbolic}. {Its energy is}
\begin{equation}
\label{eq: E_hyp_ndim}
E_{hyp}:= \frac{1}{2}\sum_{i=1}^{n+1}q_i'^2 - \frac{\hat{m}_1}{\tanh \theta_{Z_1}} - \frac{\hat{m}_2}{\tanh \theta_{Z_2}} - f \tanh \theta_{Z_0},
\end{equation}
where  $':=d /d\tau$ is the time derivative with respect to $\tau$ {which is defined analogously as \eqref{eq: time_parameter_hyp} in the three-dimensional case}.

{With a similar argument as in Section \ref{sec: hyperboloid}, we get that} the Lagrange problem in $\mathcal{W}^n_S$ is \emph{h-projective} {in the analogous sense of Definition \ref{def: hyp_projective}.} 
}

\begin{defi}
	A natural mechanical system in $B^n$ is called h-projective if the projected system in $\mathcal{H}^n_S$ is also a natural mechanical system. 
\end{defi}

Without loss of generality, {we assume} that the Lagrange problem in $B^n$ has its Kepler centers at $Z_1 =(a, 0, \cdots, 0,-1), Z_2 = (-a, 0, \cdots, 0,-1) \in B^n$ {with $0< a<1$}. {We define the norm} of $W^n_H$, and thus on $B^n$, as
\[
\| \tilde{q} \|^2_{ia} = \frac{q_1^2}{1-a^2} + \sum_{i=2}^n q_i^2
\]
for $\tilde{q} = (q_1, \cdots, q_n, -1) \in W^n_{H}$.

The Lagrange problem in $B^n$ is defined analogously as Definition \ref{def: Lagrange_B}.

\begin{lemma}
	\label{lem: Lagrange_proj_ndim_hyp}
	{The Lagrange problem in $(B^n, \|\cdot \|_{ia})$ is h-projective.}
\end{lemma}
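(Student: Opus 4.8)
The plan is to follow the template established in the proof of Lemma \ref{lem: projective} for the three-dimensional spherical case, now incorporating the two modifications that have already appeared separately in this paper: the passage to the Minkowski setting used in Lemma \ref{lem: hyp_Lagrange_projective}, and the passage to arbitrary dimension used in Lemma \ref{lem: Lagrange_proj_ndim}. The essential observation is that the two Kepler contributions and the Hooke contribution to the force field project independently, and that each projection is governed entirely by the plane spanned by $O$, the relevant center, and $\tilde{q}$; consequently the dimension $n$ enters only as bookkeeping, and the whole analysis reduces to the two-dimensional configurations already treated.

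First I would write out the force field of the Lagrange problem in $(B^n, \|\cdot\|_{ia})$ explicitly,
\[
F_{W^n_H}(\tilde{q}) = -m_1\|\tilde{q}-\tilde{Z}_1\|_{ia}^{-3}(\tilde{q}-\tilde{Z}_1) - m_2\|\tilde{q}-\tilde{Z}_2\|_{ia}^{-3}(\tilde{q}-\tilde{Z}_2) + 2f(\tilde{q}-\tilde{Z}_0),
\]
with $\tilde{Z}_1 = (a,0,\ldots,0,-1)$, $\tilde{Z}_2 = (-a,0,\ldots,0,-1)$, and $\tilde{Z}_0 = (0,\ldots,0,-1)$. I would then substitute this into the projection identity $q'' = \|\tilde{q}\|_H^2\bigl(F_{W^n_H}(\tilde{q})\,\|\tilde{q}\|_H - \lambda(\tilde{q},\dot{\tilde{q}},\ddot{\tilde{q}})\,\tilde{q}\bigr)$ derived in its $n$-dimensional Minkowski form at the start of Section \ref{sec: hyperboloid}, and project onto $T_q\mathcal{H}^n_S$. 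Exactly as before, the radial term $\lambda\tilde{q}$ is normal to $\mathcal{H}^n_S$ and drops out after projection, so only the tangential part of $\|\tilde{q}\|_H^3 F_{W^n_H}(\tilde{q})$ survives and must be shown to be a gradient.

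Next I would carry out the geometric reduction as in Lemma \ref{lem: projective}, with the Euclidean inner product replaced by the Minkowski one and $\sqrt{1+a^2}$ replaced by $\sqrt{1-a^2}$. Letting $G_i$ be the Minkowski-orthogonal foot of the perpendicular from $O$ to the line through $\tilde{Z}_i$ and $\tilde{q}$, the relation $\|\tilde{q}-\tilde{Z}_i\|_{ia} = \|\tilde{q}-\tilde{Z}_i\|_H\cdot\|G_i\|_H/\sqrt{1-a^2}$ lets me compute the magnitude of the projected force. The two Kepler terms then yield contributions of magnitude $|\hat{m}_i|\sinh^{-2}\theta_{\tilde{Z}_i}$ and the Hooke term yields $2|f|\,\sinh\theta_{\tilde{Z}_0}/\cosh^3\theta_{\tilde{Z}_0}$, where $\hat{m}_i = m_i\sqrt{1-a^2}$ and the $\theta$'s are now hyperbolic angles. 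These are precisely the gradients on $\mathcal{H}^n_S$ of $\hat{m}_1\coth\theta_{Z_1}+\hat{m}_2\coth\theta_{Z_2}+f\tanh^2\theta_{Z_0}$, so the projected field is derived from this potential, the projected system is the hyperbolic Lagrange problem of Definition \ref{def: Lagrange_hyperbolic}, and h-projectivity in the sense of Definition \ref{def: hyp_projective} follows.

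The main obstacle is purely the careful handling of the Minkowski signature: one must verify that $\tilde{q}\in B^n$ and the foci lie in the region where $\|\cdot\|_H$ is well-defined and where the hyperbolic identities $\coth$, $\tanh$ correctly replace their spherical counterparts $\cot$, $\tan$, and that the perpendicular foot $G_i$ is taken orthogonally with respect to the indefinite metric rather than the Euclidean one. Once the signs are tracked correctly, the computation is formally identical to the spherical one of Lemma \ref{lem: projective}, and the higher-dimensional bookkeeping is trivial because the entire perpendicular-foot construction takes place inside a fixed two-dimensional plane.
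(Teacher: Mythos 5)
Your proposal is correct and follows essentially the same route as the paper, which proves this lemma only by appeal to the argument of Lemma \ref{lem: projective} adapted to the Minkowski setting and to arbitrary dimension; you have simply written out the details (the force field, the vanishing of the radial term after projection to $T_q\mathcal{H}^n_S$, the perpendicular-foot computation with $\sqrt{1-a^2}$ in place of $\sqrt{1+a^2}$, and the identification of the projected magnitudes with the derivatives of $\coth\theta$ and $\tanh^2\theta$) that the paper leaves implicit. The resulting potential matches Definition \ref{def: Lagrange_hyperbolic}, so the argument is complete.
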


{{This implies the integrability of the Lagrange problems in $B^n$ and in $\mathcal{H}^n_{S}$}. 
The Lagrange problem in $\mathcal{H}_S^n${,} which is the projection of the Lagrange problem in $W_n^H${,} has a first integral {in addition to its own energy $E_{hyp}$}{:} the projection of {the energy of} the spatial Lagrange problem in $B^n$: 
\footnotesize
\begin{equation}
\label{eq: E_sp_proj_ndim_hyp}
\begin{split}
\hat{E}_{sp}:= &\frac{1}{2} \left(\frac{(q_{n+1}q_1' - q_1 q_{n+1}')^2}{1-a^2} +\sum_{i =2}^n ( q_{n+1} q_i' - q_i q_{n+1}' )^2  \right)-f\left(\frac{q_1^2}{q_4^2(1-a^2)} + \sum_{i =2}^{n} \frac{q_i^2}{q_{n+1}^2}  \right)\\
& -m_1\left(\frac{(-q_1/q_{n+1} -a)^2}{1-a^2} + \sum_{i=2}^n \frac{q_i^2}{q_{n+1}^2}    \right)^{-1/2} - m_2 \left(\frac{(-q_1/q_{n+1} +a)^2}{1-a^2} + \sum_{i=2}^n \frac{q_i^2}{q_{n+1}^2}    \right)^{-1/2} 
\end{split}
\end{equation}
\normalsize
 The other $n-2$ first integrals are obtained as 
\begin{equation}
\label{eq: Ck_proj_ndim_hyp}
\hat{C}_k:= \sum_{2 \leq i < j \leq k} \hat{L}_{{i}j}^2, \qquad 3 \leq k \leq n,
\end{equation}
where $ \hat{L}_{{i}j} = q_i q_j' - q_j q_i'$ is the angular momentum.
}

{As reflection walls in $\mathcal{H}^n_S$, we consider confocal quadrics defined as follows.}
{We set the Kepler centers $Z_1,Z_2$ on $\mathcal{H}^n_S$ as the projections of $\tilde{Z}_1, \tilde{Z}_2$:
\[
Z_1 = \left(\frac{a}{\sqrt{1-a^2}}, 0, \cdots, 0, -\frac{1}{\sqrt{1-a^2}}\right), Z_2 = \left(-\frac{a}{\sqrt{1-a^2}}, 0, \cdots, 0, -\frac{1}{\sqrt{1-a^2}}\right).
\]
}

{Foci for quadrics in $\mathcal{H}^n_S$ are defined in the same way as in $\mathcal{H}_S$.}
\begin{defi}
	{A quadric with two foci on the $n$-dimensional hyperboloid model $(\mathcal{H}^n_S, \|\cdot \|_H)$ is called confocal if it is focused at the Kepler centers $Z_1,Z_2$.}
\end{defi}

{Such a quadric is centered at the Hooke center $Z_0 = (0,\cdots, 0, -1)$ and is, after normalization, given by 
\begin{equation}
	\left\{
	\begin{array}{l}
		\sum_{i=1}^{n} q_i^2 - q_{n+1}^2=-1, \quad q_{n+1}<0 \\
		\frac{q_1^2}{A^2}+\sum_{i=2}^{n} \frac{q_i^2}{B^2} - q_{n+1}^2 = 0, \quad A, B\in \R \setminus 0
	\end{array}
	\right.
\end{equation}
for a spheroid, and by
\begin{equation}
	\left\{
	\begin{array}{l}
		\sum_{i=1}^{n} q_i^2 - q_{n+1}^2=-1,\quad q_{n+1}<0 \\
		\frac{q_1^2}{A^2}-\sum_{i=2}^{n} \frac{q_i^2}{B^2} - q_{n+1}^2 = 0, \quad A, B\in \R \setminus 0
	\end{array}
	\right.
\end{equation}
for a circular hyperboloid of two sheets.}

{Note that the angular momentum {components} {$\hat{L}_{i j}, 2 \leq i,j \leq n$}, {and consequently also} the first integrals $\hat{C}_k$ for $3 \leq k \leq n$, are conserved under {the} reflections at {these} reflection walls due to the symmetry of the walls. Additionally, the first integral $\hat{E}_{sp}$ is also preserved under such reflections. {These facts follows directly from the lemma below. }

\begin{lemma}
	\label{lem: reflection_ndim_hyp}
	{Let $\tilde{\mathcal{B}}$ be a centered confocal quadric reflection wall in {the space} $(B^n, \| \cdot\|_{ia})$ with foci at $\tilde{F}_1= (a, 0, \cdots, 0)$ and $\tilde{F}_2 = (-a,0, \cdots, 0)$ in $B^n$. Let $\mathcal{B}$ be the quadric on {$\mathcal{H}^n_{S}$} given as the projection of $\tilde{\mathcal{B}}$. Then $\mathcal{B}$ is a centered confocal quadric having its foci at the projection of $\tilde{F}_1$ and $\tilde{F}_2$ on $\mathcal{H}^n_{S}$. Moreover, $\tilde{\mathcal{B}}$ and $\mathcal{B}$ are in projective correspondence.} 
\end{lemma}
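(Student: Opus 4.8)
The plan is to transcribe the proof of Lemma~\ref{lem: reflection_ndim} into the Minkowski setting, replacing the Euclidean ambient inner product on $\R^{n+1}$ by the indefinite one \eqref{eq: Minkowski_metric_ndim} on $\R^{n,1}$ and the round sphere by the hyperboloid $\mathcal{H}^n_S$; this is precisely the passage already carried out from Lemma~\ref{lem: reflection_law} to Lemma~\ref{lem: hyp_reflection_law} in dimension three. Two things must be shown: that the projection $\mathcal{B}$ of $\tilde{\mathcal{B}}$ is a centered confocal quadric on $\mathcal{H}^n_S$ with foci at the projections of $\tilde{F}_1, \tilde{F}_2$, and that $\tilde{\mathcal{B}}$ and $\mathcal{B}$ are in projective correspondence.

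For the first claim I would start from the spheroid
\[
\tilde{\mathcal{B}}:\qquad \frac{q_1^2}{A^2} + \sum_{i=2}^n \frac{q_i^2}{B^2} = 1,
\]
observe that both it and its central projection
\[
\mathcal{B}:\qquad \frac{q_1^2}{A^2} + \sum_{i=2}^n \frac{q_i^2}{B^2} - q_{n+1}^2 = 0,\qquad q_{n+1}<0,
\]
are invariant under the $SO(n-1)$–action rotating $q_2,\dots,q_n$, and then reduce the identification of the foci to the plane $q_3=\dots=q_n=0$. There the picture is the two-dimensional hyperbolic confocal one, whose foci sit at the projections $Z_1, Z_2$ of $\tilde{F}_1,\tilde{F}_2$ once the parameter relation forced by the norm $\|\cdot\|_{ia}$ is imposed, namely the hyperbolic analogue $1-a^2 = (1-A^2)/(1-B^2)$ of the spherical relation $1+a^2=(A^2+1)/(B^2+1)$. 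Since the rotation group acts by hyperbolic isometries fixing $Z_1, Z_2$, the hyperbolic distances to the two foci are constant along each orbit, so $\mathcal{B}$ is indeed a confocal spheroid in the sense of Definition~\ref{def: symmetric_quadrics_H3}.

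For the projective correspondence I would split the incoming and outgoing velocities into tangential and normal components relative to the wall, exactly as in Lemma~\ref{lem: reflection_law}. Because $\tilde{\mathcal{B}}$ and $\mathcal{B}$ project onto one another, their tangent spaces, and hence the tangential components, are automatically in correspondence; everything therefore reduces to matching the normal directions. Concretely, I would compute the normal $N$ to $\mathcal{B}$ at $q\in\mathcal{H}^n_S$ from the gradient of its defining quadratic form taken with respect to the Minkowski metric, push it forward by the differential of the central projection $q\mapsto\tilde{q}=(-q_1/q_{n+1},\dots,-q_n/q_{n+1})$, and verify that the image is proportional to the $\|\cdot\|_{ia}$–gradient of $f=q_1^2/A^2 + \sum_{i=2}^n q_i^2/B^2 -1$ at $\tilde{q}$. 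As in the Euclidean computation the proportionality factor is a single scalar that absorbs the weight $1-a^2=(1-A^2)/(1-B^2)$ appearing in the $q_1$–entry of the metric, which shows that the projected normal is $\|\cdot\|_{ia}$–normal to $\tilde{\mathcal{B}}$ and completes the correspondence.

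I do not expect a genuine obstacle here; the only point requiring attention is the indefinite signature. All gradients, inner products and normals in $\R^{n,1}$ must be taken with respect to \eqref{eq: Minkowski_metric_ndim}, and one must carry the sign of the $-q_{n+1}^2$ term and of $q_{n+1}<0$ correctly through the normal computation, using that $T_q\mathcal{H}^n_S$ is Minkowski-orthogonal to the position vector $q$ so that the differential of the central projection annihilates the radial direction. Since these signs enter uniformly, the calculation is formally the spherical one with $+q_{n+1}^2$ replaced by $-q_{n+1}^2$, exactly as in Lemma~\ref{lem: hyp_reflection_law}. Finally, the circular hyperboloid of two sheets is obtained from the spheroid by the analytic continuation $B\mapsto iB$, under which the defining equation and all the identities above persist, so the same conclusion holds; this finishes the proof.
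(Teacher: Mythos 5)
Your proposal is correct and follows essentially the same route as the paper, which for this lemma simply states that it ``directly follows from the same argument as in the three-dimensional case''; your write-up fleshes out exactly that transcription (Minkowski gradients, the relation $1-a^2=(1-A^2)/(1-B^2)$, reduction by the $SO(n-1)$-symmetry, and analytic continuation $B\mapsto iB$ for the two-sheeted case), and the details you supply check out.
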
 
{This lemma directly follows from the {same argument} as in the three-dimensional case.}
{By combining the results in Lemma \ref{lem: Lagrange_proj_ndim_hyp} and Lemma \ref{lem: reflection_ndim_hyp}, we obtain the following integrability results for the Lagrange billiard defined on the hyperboloid {model} $\mathcal{H}^n$.}

\begin{theorem}
		{The mechanical billiard systems defined on the hyperboloid {model} $\mathcal{H}^n_S$ with the hyperbolic Lagrange problem 
		and with any finite combination of confocal symmetric centered 
		quadrics, with foci at the two Kepler centers as reflection walls, are integrable.}

	{As subcases, the mechanical billiard systems defined on $\mathcal{H}^n_{S}$}
	\begin{itemize}
		\item {with the hyperbolic two-center problem and with any finite combination of confocal quadrics with foci at the two centers as reflection walls;}
		\item {with the hyperbolic Kepler problem and with any finite combination of confocal quadrics with one of the foci at the Kepler center as reflection walls;}
		\item  {with the hyperbolic Hooke problem and with any finite combination of confocal quadrics centered at the Hooke center as reflection walls, }
	\end{itemize}	
	are integrable. {The first integrals are $E_{hyp}$ \eqref{eq: E_hyp_ndim}, $\hat{E}_{sp}$ \eqref{eq: E_sp_proj_ndim_hyp}, and $\hat{C}_k$ \eqref{eq: Ck_proj_ndim_hyp} for $3 \leq k \leq n$}.
\end{theorem}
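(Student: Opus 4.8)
The plan is to assemble the theorem from the two structural lemmas already established in this section, mirroring the spherical and Euclidean higher-dimensional arguments: first I would establish the integrability of the underlying hyperbolic Lagrange problem on $\mathcal{H}^n_S$, and only afterwards verify that each of the $n$ first integrals survives the reflections at confocal quadrics. By Lemma \ref{lem: Lagrange_proj_ndim_hyp}, the system is h-projective, so the projection $\hat{E}_{sp}$ \eqref{eq: E_sp_proj_ndim_hyp} of the spatial energy on $B^n$ is a first integral on $\mathcal{H}^n_S$ in addition to its own energy $E_{hyp}$ \eqref{eq: E_hyp_ndim}. Since the Kepler centers and the Hooke center all lie on the $q_1$-axis, the system is invariant under the $SO(n-1)$ of rotations fixing that axis, whence the angular momentum components $\hat{L}_{ij}$ with $2 \le i,j \le n$ are conserved; from these the functions $\hat{C}_k$ \eqref{eq: Ck_proj_ndim_hyp} are built.

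I would then argue functional independence and involution of $E_{hyp}, \hat{E}_{sp}, \hat{C}_3, \dots, \hat{C}_n$ exactly as in the corresponding Euclidean lemma. Independence follows from the triangular structure of the Jacobian of the $\hat{C}_k$ in the position variables together with the velocity-block argument used in Corollary \ref{cor: integrability_dim_3}, transported through the central projection, which is a diffeomorphism $\mathcal{H}^n_S \to B^n$ inducing a bundle isomorphism on tangent spaces. Involution follows from the $\mathfrak{so}(n-1)$ bracket relations among the $\hat{L}_{ij}$ for $\{\hat{C}_k,\hat{C}_\ell\}=0$, from $\hat{E}_{sp}$ and the $\hat{C}_k$ being invariants of the $E_{hyp}$-flow, and from the rotational invariance of $\hat{E}_{sp}$ under the subgroup generated by the $\hat{L}_{ij}$ for $\{\hat{E}_{sp},\hat{C}_k\}=0$. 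This already yields integrability of the Lagrange problem itself on $\mathcal{H}^n_S$.

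For the billiard, I would invoke Lemma \ref{lem: reflection_ndim_hyp}: any confocal quadric wall on $\mathcal{H}^n_S$ is in projective correspondence with a confocal quadric in $B^n$. Elastic reflection in $B^n$ preserves the spatial energy $E_{sp}$, since the $\|\cdot\|_{ia}$-norm of the velocity is unchanged and the potential is continuous at the reflection point; by the commutativity of projection and reflection, its projection $\hat{E}_{sp}$ is then preserved under reflections on $\mathcal{H}^n_S$, while $E_{hyp}$ is preserved trivially. For the $\hat{C}_k$ it suffices, by the analogue of Lemma \ref{lem: angular_momentum_pres_ndim}, to note that reflection at a wall symmetric under the rotations fixing the $q_1$-axis leaves the $q_j$-component of the velocity unchanged whenever the reflection point has vanishing $q_j$-coordinate, giving conservation of each $\hat{L}_{ij}$ with $2 \le i,j \le n$ and hence of $\hat{C}_k$. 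Combining, all $n$ integrals are invariant under reflections, independent, and in involution, so the billiard is integrable; the subcases follow by specializing to $f=0$, to $m_1=f=0$ or $m_2=f=0$, or to $m_1=m_2=0$.

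The step I expect to demand the most care is the transfer of the conservation of $\hat{E}_{sp}$ through reflections, that is, the commutativity of central projection and elastic reflection encoded in Lemma \ref{lem: reflection_ndim_hyp}: one must check that the metric $\|\cdot\|_{ia}$ on $W^n_H$ is precisely the affine modification for which the normal to a confocal quadric in $B^n$ pulls back to the normal of the projected quadric on $\mathcal{H}^n_S$, so that tangential and normal velocity components project consistently. Everything else amounts to replacing the round and Minkowski data of the spherical computation by the hyperboloid data, changing $1+a^2$ to $1-a^2$ and the trigonometric potentials by their hyperbolic counterparts.
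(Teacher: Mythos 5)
Your proposal is correct and follows essentially the same route as the paper: the paper obtains this theorem directly by combining Lemma \ref{lem: Lagrange_proj_ndim_hyp} (h-projectivity, yielding $\hat{E}_{sp}$ alongside $E_{hyp}$ and the $\hat{C}_k$ built from the conserved $\hat{L}_{ij}$) with Lemma \ref{lem: reflection_ndim_hyp} (projective correspondence of confocal quadric walls), with independence and involution transported from the Euclidean computation exactly as you describe. Your write-up is somewhat more explicit about the reflection-invariance of the $\hat{C}_k$ and the role of the norm $\|\cdot\|_{ia}$, but the underlying argument is the same.
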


{\bf Acknowledgement}	
A.T. and L.Z. are supported by DFG ZH 605/1-1, ZH 605/1-2. 


\hspace{-1cm}
\begin{tabular}{@{}l@{}}%
	Airi Takeuchi\\
	\textsc{University of Augsburg, Augsburg, Germany.}\\
	\textit{E-mail address}: \texttt{airi1.takeuchi@uni-a.de}
\end{tabular}
\vspace{10pt}

\hspace{-1cm}
\begin{tabular}{@{}l@{}}%
	Lei Zhao\\
	\textsc{University of Augsburg, Augsburg, Germany.}\\
	\textit{E-mail address}: \texttt{lei.zhao@math.uni-augsburg.de}
\end{tabular}

\end{document}